\providecommand{\E}[1]{{\ensuremath{\mathbb{E}}\mspace{-2mu}\left[#1\right]}}%
\providecommand{\work}[1]{{\ensuremath{\mathrm{Work}}\mspace{-2mu}\left[#1\right]}}%
\providecommand{\error}[1]{{\ensuremath{\mathrm{Error}}\mspace{-2mu}\left[#1\right]}}%
\providecommand{\cset}{\mathcal{C}}
\providecommand{\pset}{\mathbb{P}}
\providecommand{\rset}{\mathbb{R}}
\providecommand{\zset}{\mathbb{Z}}
\providecommand{\nset}{\mathbb{N}}
\providecommand{\Order}[1]{ {\ensuremath{ \mathcal O\left( #1 \right)}} }
\providecommand{\CWork}{\ensuremath{C_{\text{\textnormal{work}}}}}
\providecommand{\CError}{\ensuremath{C_{\text{\textnormal{error}}}}}
\renewcommand{\vec}[1]{{\ensuremath{{\boldsymbol #1}}}}
\providecommand{\valpha}{{\ensuremath{\vec{\alpha}}}}
\providecommand{\vbeta}{{\ensuremath{\vec{\beta}}}}
\providecommand{\veta}{{\ensuremath{\vec{\eta}}}}
\providecommand{\vXi}{{\ensuremath{\vec{\Xi}}}}
\newenvironment{changes}{\color[rgb]{0.7,0,0}}{}
\newcommand{\qoi}{F}
\newcommand{\descqoi}[2]{F_{#1,#2}}
\newcommand{\bb}{\vec b}
\newcommand{\cc}{\vec c}
\newcommand{\ee}{\vec e}
\newcommand{\jj}{\vec j}
\newcommand{\qq}{\vec q}
\newcommand{\vv}{\vec v}
\newcommand{\ww}{\vec w}
\newcommand{\xx}{\vec x}
\newcommand{\yy}{\vec y}
\newcommand{\zz}{\vec z}
\newcommand{\oone}{\bm{1}}
\newcommand{\di}[1]{~\text{d}#1}
\newcommand{\ellrad}{\xi}	%
\newcommand{\mcE}{\mathcal{E}}
\newcommand{\real}[1]{ \mathfrak{Re}\left( #1 \right)}
\newcommand{\im}[1]{\mathfrak{Im}\left( #1 \right)}
\newcommand{\ortp}{\Psi}	%
\newcommand{\oortp}{\bm{\Psi}}	%
\renewcommand{\div}{\textnormal{div}} 		%
\newcommand{\forcing}{\mathscr{F}}
\providecommand{\dRate}[1]{{r_{#1}}}
\definecolor{mygreen}{rgb}{0,0.6,0}
\theoremstyle{plain}
\declaretheorem{lemma}
\declaretheorem{problem}
\declaretheorem{assumption}
\declaretheorem{example}
\declaretheorem{remark}
\declaretheorem{definition}
\begin{document}
\begin{frontmatter}

\title{Multi-Index Stochastic Collocation for random PDEs}

\author[KAUST]{Abdul-Lateef~Haji-Ali\corref{cor1}}
\ead{abdullateef.hajiali@kaust.edu.sa}

\author[EPFL]{Fabio Nobile}
\ead{ fabio.nobile@epfl.ch}

\author[EPFL,UNIPV]{Lorenzo Tamellini}
\ead{lorenzo.tamellini@unipv.it}

\author[KAUST]{Ra\'ul Tempone}
\ead{raul.tempone@kaust.edu.sa}

\cortext[cor1]{Corresponding author} %

\address[KAUST]{CEMSE, King Abdullah University of Science
  and Technology, Thuwal 23955-6900, Saudi Arabia.  }
\address[EPFL]{CSQI - MATHICSE, \'Ecole Polytechnique F\'ed\'erale de
  Lausanne, Station 8, CH 1015, Lausanne, Switzerland}
\address[UNIPV]{Dipartimento di Matematica ``F. Casorati'',
  Universit\`a di Pavia, Via Ferrata 5, 27100 Pavia, Italy}

\begin{abstract}
In this work we introduce the Multi-Index Stochastic Collocation method (MISC)
for computing statistics of the solution of a PDE with random data.
MISC is a combination technique based on mixed differences of spatial
approximations and quadratures over the space of random data.
We propose an optimization procedure to select the most effective
mixed differences to include in the MISC estimator:
{such optimization is a crucial step and allows us to build a method that, provided with sufficient solution regularity,  is
potentially more effective than other multi-level collocation methods already available in literature}.
We then provide a complexity analysis that assumes decay rates of product type
for such mixed differences, {showing that in the optimal case the convergence rate of MISC is only dictated by the convergence of the deterministic solver applied to a one dimensional problem.}
We show the effectiveness of MISC with some computational tests,
comparing it with other related methods available in the literature,
such as the Multi-Index and Multilevel Monte Carlo, Multilevel Stochastic
Collocation, Quasi Optimal Stochastic Collocation and Sparse Composite
Collocation methods.
\end{abstract}

\begin{keyword}
Uncertainty Quantification \sep
Random PDEs \sep
Multivariate approximation \sep
Sparse grids \sep
Stochastic Collocation methods \sep
Multilevel methods \sep
Combination technique.

\MSC[2010]
  41A10 \sep %
  65C20 \sep %
  65N30 \sep %
  65N05 %

\end{keyword}

\end{frontmatter}

\section{Introduction}

Uncertainty Quantification (UQ) is an interdisciplinary, fast-growing
research area that focuses on devising mathematical techniques to
tackle problems in engineering and natural sciences in which only a
probabilistic description of the parameters of the governing equations
is available, due to measurement errors, intrinsic
non-measurability/non-predictability, or incomplete knowledge of the
system of interest. In this context, ``parameters'' is a term used in
broad sense to refer to constitutive laws, forcing terms, domain
shapes, boundary and initial conditions, etc.

UQ methods can be divided into deterministic
and randomized methods. While randomized techniques, which include the
Monte Carlo sampling method, are essentially based on random sampling
and ensemble averaging, deterministic methods proceed by building a
surrogate of the system's response function over the parameter space,
which is then processed to obtain the desired
information. Typical goals include computing statistical moments
(expected value, variance, higher moments, correlations) of some
quantity of interest of the system at hand, typically functionals of
the state variables (forward problem), or updating the
statistical description of the random parameters given some
observations of the system at hand (inverse problem).  In any case,
multiple resolutions of the governing equations are needed to
explore the dependence of the state variables on the random
parameters. The computational method used should therefore be
carefully designed to minimize the computational effort.

In this work, we focus on the case of PDEs with random data, for which
both deterministic and randomized approaches have been extensively explored in recent years.
As for the deterministic methods, we mention here the methods based on polynomial
expansions computed either by global Galerkin-type projections
\cite{ghanem.spanos:book,lemaitre:book,matthies.keese:galerkin,todor.schwab:convergence,xiu.karniadakis:wiener}
or collocation strategies based on sparse grids
(see e.g. \cite{babuska.nobile.eal:stochastic2,b.griebel:acta,nobile.tempone.eal:aniso,xiu.hesthaven:high}),
low-rank techniques \cite{khoromskij.schwab:tensor,khoromskij.oseledets:tensor,nouy:2008a,ballani.gras:tensor}
and reduced basis methods (see e.g. \cite{boyaval.patera:RBforPDE,chen.eal:RBvsSG}).
All these approaches have been found to be particularly effective when
applied to problems with a moderate number of random parameters
(low-dimensional probability space) and smooth response functions.
Although significant effort has been expended on increasing the
efficiency of such deterministic methods with respect to the number of
random parameters (see, e.g., \cite{cohen.devore.schwab:nterm2}, the seminal work on infinite
dimensional polynomial approximation of elliptic PDEs with random
coefficients), Monte Carlo-type approximations remain the primary
choice for problems with non-smooth response functions and/or those that depend on a
high number of random parameters, despite their slow convergence with respect to sample size.

A very promising methodology that builds on the classical Monte Carlo
method and enhances its performance is offered by the so-called
\emph{Multilevel Monte Carlo} (MLMC). It was first
proposed in \cite{Heinrich:MLMC} for applications in parametric
integration and extended to weak approximation of stochastic
differential equations in \cite{giles:MLMC}, which
also provided a full complexity analysis.
Let $\{h_\ell\}_{\ell=0}^L$ be a (scalar) sequence of spatial/temporal resolution
levels that can be used for the numerical discretization of the PDE at hand
and $\{\qoi_\ell\}_{\ell=0}^L$ be the corresponding
approximations of the quantity of interest,
and suppose that the final goal of the UQ analysis
is to compute the expected value of $\qoi$, $\E{\qoi}$.
While a classic Monte Carlo approach simply
approximates the expected value by using an ensemble average over a sample
of independent replicas of the random parameters, the MLMC method relies on the simple
observation that, by linearity of expectation,
\begin{equation}\label{eq:MLMC-telescopic}
  \E{\qoi} \approx \E{\qoi_L} =
\E{\qoi_0} + \sum_{\ell=1}^L\E{\qoi_\ell - \qoi_{\ell-1}},
\end{equation}
and computes by independent Monte Carlo samplers each expectation in the sum.
Indeed, if the discretization of the underlying
differential model is converging with respect to the
discretization level, $\ell$, the variance of $(\qoi_\ell-\qoi_{\ell-1})$ will be
smaller and smaller as $\ell$ increases, i.e., when the spatial/temporal
resolution increases. Dramatic computational saving can thus be obtained by approximating
the quantities $\E{\qoi_\ell-\qoi_{\ell-1}}$ with a smaller and smaller sample
size, since most of the variability of $\qoi$ will be captured with coarse simulations and
only a few resolutions over the finest discretization levels will be performed.
The MLMC estimator is therefore given by
\begin{equation}\label{eq:MLMC-estim}
  \E{\qoi} \approx \sum_{\ell=0}^L \frac{1}{M_\ell}\sum_{m=1}^{M_\ell} \left(\qoi_{\ell}(\omega_{m,\ell}) - \qoi_{\ell-1}(\omega_{m,\ell})\right),
  \quad \text{with } \qoi_{-1}(\cdot)=0,
\end{equation}
where $\omega_{m, \ell}$ are the i.i.d. replicas of the random parameters.
The application of MLMC methods to UQ problems involving PDEs with
random data has been investigated from the mathematical point of view
in a number of recent publications, see
e.g. \cite{bsz11,bls13,scheichl.charrier:MLMC,scheichl.giles:MLMC,mss12}.
Recent works \cite{teckentrup.etal:MLSC,van-wyk:MLSC,hps13,kss12} have
explored the possibility of replacing the Monte Carlo sampler on each
level by other quadrature formulas such as sparse grids or quasi-Monte Carlo quadrature,
obtaining the so-called Multilevel Stochastic Collocation
(MLSC) or Multilevel Quasi-Monte Carlo (MLQCM) methods.
See also \cite{tesei:MCCV} for a related
approach where the Multilevel Monte Carlo method is combined with a control
variate technique.

The starting point of this work is instead %
the so-called Multi-Index Monte Carlo method (MIMC), recently introduced in
\cite{abdullatif.etal:MultiIndexMC}, that differs from the Multilevel
Monte Carlo method in that the telescoping idea presented in equations
\eqref{eq:MLMC-telescopic}-\eqref{eq:MLMC-estim} is applied to
discretizations indexed by a multi-index rather
than a scalar index, thus allowing each discretization parameter to vary independently
of the others. Analogously to what done in
\cite{teckentrup.etal:MLSC,van-wyk:MLSC,hps13} in the context of
stochastic collocation, here we propose to replace
the Monte Carlo quadrature with a sparse grid quadrature at each
telescopic level, obtaining in our case the Multi-Index Stochastic
Collocation method (MISC). In other words, MISC can be seen
as a multi-index version of MLSC, or a stochastic collocation version of MIMC.
From a slightly different perspective, MISC is also closely related to the combination technique
developed for the solution of (deterministic) PDEs in
\cite{Bungartz.Griebel.Roschke.ea:pointwise.conv,b.griebel:acta,Griebel.schneider.zenger:combination,Hegland:combination,griebel.harbrecht:tensor};
in this work, the combination technique is used with respect to both
the deterministic and stochastic variables.

One key difference between the present work and
\cite{teckentrup.etal:MLSC,van-wyk:MLSC,hps13} is that the number of
problem solves to be performed at each discretization level is not
determined by balancing the spatial and stochastic components of the
error (based, e.g., on convergence error estimates), but rather suitably
extending the knapsack-problem approach that we employed in
\cite{nobile.eal:optimal-sparse-grids,back.nobile.eal:optimal,back.nobile.eal:lognormal}
to derive the so-called Quasi-Optimal Sparse Grids method (see also \cite{griebel.knapek:optimized}).
A somewhat analogous approach was proposed in \cite{bieri:sparse.tensor.coll}, where
the number of solves per discretization level is prescribed \emph{a-priori}
based on a standard sparsification procedure (we will give more details on
the comparison between these different methods later on).
{In this work, we provide a complexity analysis of MISC and illustrate its performance improvements,
comparing it to other methods by means of numerical examples.}

The remainder of this paper is organized as follows. In Section
\ref{s:problem-setting}, we introduce the problem to be solved and the
approximation schemes that will be used.  The Multi-Index Stochastic
Collocation method is introduced in Section \ref{s:method}, and our
main theorem detailing the complexity of MISC for a particular choice
of an index set is presented in Section \ref{s:complexity}.  Finally,
Section \ref{s:numerics} presents some numerical tests, while Section
\ref{s:conclusions} offers some conclusions and final remarks. The
Appendix contains the technical proof of the main theorem.  Throughout
the rest of this work we use the following notation:
\begin{itemize}
  \item $\nset$ denotes the set of integer numbers including zero;
  \item $\nset_+$ denotes the set of positive integer numbers, i.e. excluding zero;
  \item $\rset_+$ denotes the set of positive real numbers, $\rset_+ = \{ r \in \rset : r > 0\}$;
  \item $\oone$ denotes a vector whose components are always equal to one;
  \item $\vec e^\kappa_\ell$ denotes the $\ell$-th canonical vector in $\rset^\kappa$,
    i.e., $(\vec e^\kappa_\ell)_i = 1$ if $\ell=i$ and zero otherwise;
    however, for the sake of clarity, we often omit the superscript $\kappa$ when
    obvious from the context. For instance, if $\vv \in \rset^N$, we will write
    $\vv - \ee_1$ instead of $\vv - \ee_1^N$;
  \item given $\vv \in \rset^N$, $|\vv| = \sum_{n=1}^N v_n$,
    $\max(\vv) = \max_{n=1,\ldots N}v_n$ and $\min(\vv) = \min_{n=1,\ldots N}v_n$;
  \item given $\vv \in \rset^N$ and $f:\rset \to \rset$,
    $f(\vv)$ denotes the vector obtained by applying $f$ to each component of $\vv$,
    $f(\vv) = [f(v_1),f(v_2),\cdots,f(v_N)] \in \rset^N$;
  \item given $\vv, \ww \in \rset^N$, the inequality $\vv > \ww$ holds true if and only if $v_n > w_n$ $\forall n=1,\ldots,N$.
  \item given $\vv \in \rset^D$ and $\ww \in \rset^N$, $[\vv, \ww] = (v_1,\ldots,v_D,w_1,\ldots, w_N) \in \rset^{D+N}$.
\end{itemize}

\section{Problem setting}\label{s:problem-setting}

Let $\mathscr{B} \subset \rset^{d}$, $d=1,2,3$, be an open
hyper-rectangular domain (referred to hereafter as the ``physical domain'')
and let $\yy=(y_1,y_2,\ldots,y_N)$ be a $N$-dimensional random vector whose components
are mutually independent and uniformly distributed random variables with support
$\Gamma_n \subset \rset$ and probability density function $\rho_n(y_n) = \frac{1}{|\Gamma_n|}$.
Denoting $\Gamma = \Gamma_1 \times \Gamma_2 \ldots \times \Gamma_N$ (referred to hereafter as the
``stochastic domain'' or ``parameter space'') and by $\sigma_B(\Gamma)$ the Borel $\sigma$-algebra over $\Gamma$,
$\rho(\yy)d\yy = \prod_{n=1}^N \rho_n(y_n) dy_n$ is therefore
a probability measure on $\Gamma$, due to the independence of $y_n$,
and $(\Gamma,\sigma_B(\Gamma),\rho(\yy)d\yy)$ is a complete probability space.
Consider the following generic PDE, together with the assumption stated next:
\begin{problem}\label{pb:strong_form_yy}
Find $u:\mathscr{B} \times \Gamma \rightarrow \rset$ such that for
$\rho$-almost every $\yy \in \Gamma$
\[%
  \begin{cases}
    \mathcal{L}(u;\xx,\yy) = \forcing(\xx)  & \xx\in \mathscr{B},\\
    u(\xx,\yy)=0 %
 & \xx\in \partial \mathscr{B}.
  \end{cases}
\]
\end{problem}
\begin{assumption}[Well posedness]\label{assump:wellposed}
  Problem \ref{pb:strong_form_yy} is well posed in some Hilbert
  space $V$ for $\rho$-almost every $\yy \in \Gamma$.
\end{assumption}
The solution of Problem \ref{pb:strong_form_yy} can be
seen as an $N$-variate Hilbert-space valued function $u(\yy) : \Gamma \rightarrow V$.
The random variables, $y_n$, can represent scalar values whose exact value is unknown,
or they can stem from a spectral decomposition of a random field, like a
Karhunen-Lo\`eve or Fourier expansion, possibly truncated after a finite number of terms,
see, e.g., \cite{babuska.nobile.eal:stochastic2,back.nobile.eal:lognormal}.
It is also useful to introduce the Bochner space
$
 L^2_\rho(\Gamma; V) =
 \left\{ u: \Gamma \rightarrow V \mbox{ strongly measurable s.t. } \int_{\Gamma} \| u(\yy) \|_{V}^2 \rho(\yy)d\yy < \infty\right\}.
$
Finally, given some functional of the solution $u$, $\Theta: V \rightarrow \rset$,
we denote by $\qoi : \Gamma \rightarrow \rset$
the $N$-variate real-valued function assigning to each realization $\yy \in \Gamma$ the corresponding
value of $\Theta[u]$ (quantity of interest), i.e., $\qoi(\yy) = \Theta[u(\cdot,\yy)]$,
and we aim at estimating its expected value,
\[\E{\qoi} = \int_\Gamma \qoi(\yy) \rho(\yy) d\yy.\]
\begin{example}\label{example:ell}
As a motivating example, consider the following elliptic problem:
find $u:\mathscr{B} \times \Gamma \rightarrow \rset$
such that for $\rho$-almost every $\yy \in \Gamma$
\begin{equation}\label{eq:ell_PDE_strong_yy_example}
  \begin{cases}
    -\div( a(\xx,\yy)\nabla u(\xx,\yy) ) =\forcing(\xx)  & \xx\in \mathscr{B},\\
    u(\xx,\yy)=h(\xx) & \xx\in\partial \mathscr{B},
  \end{cases}
\end{equation}
holds, where $\div$ and $\nabla$ denote differentiation with respect to the physical
variables, $\xx$, only, and the function $a: \mathscr{B} \times \Gamma \rightarrow \rset$
is bounded away from $0$ and $\infty$, i.e., there exist two constants, $a_{min}, a_{max}$, such that
\begin{equation}\label{eq:bounding_a}
0< a_{min} \leq a(\xx,\yy) \leq a_{max} <\infty,
\quad \forall \xx \in \mathscr{B} \mbox{ and for } \rho\mbox{-almost every } \yy \in \Gamma.
\end{equation}
This boundedness condition guarantees that Assumption \ref{assump:wellposed} is satisfied,
i.e. the equation is well posed for $\rho$-almost every $\yy \in \Gamma$, thanks to a straightforward
application of the Lax-Milgram lemma;
moreover, the equation is well posed in $L^2_\rho(\Gamma; V)$, where $V$
is the classical Sobolev space $H^1_0(\mathscr{B})$, see, e.g., \cite{babuska.nobile.eal:stochastic2}.
This is the example we will focus on in Section \ref{s:numerics}, where we will test
numerically the performance of the Multi-Index Stochastic Collocation method that we will
detail in Section \ref{s:method}.
\end{example}

\begin{remark}
  The method that we present in the following sections can be also applied
  to more general problems than Problem \ref{pb:strong_form_yy}\, in
  which the forcing terms, boundary conditions and possibly domain
  shape are also modeled as uncertain; the extension to time-dependent
  problems with uncertain initial conditions is also
  straightforward. Other probability measures can also be considered;
  the very relevant case in which the random variables, $y_n$, are normally distributed
  is an example.
\end{remark}
\begin{remark}
As will be clearer in a moment, the methodology we propose uses tensorized
solvers for deterministic PDEs. Although for ease of exposition we have
assumed that the spatial domain, $\mathscr{B}$, is a hyper-rectangle, it is important
to remark that the methodology proposed in this work can also be applied to
non hyper-rectangular domains: this can be achieved by introducing a mapping from a reference
hyper-rectangle to the generic domain of interest (with techniques such as those
proposed in the context of Isogeometric Analysis \cite{hughes:IGA}
or Transfinite Interpolation \cite{gordon.hall:transfinite.interp})
or by a Domain Decomposition approach \cite{quarteroni1999domain}
if the domain can be obtained as a union of hyper-rectangles.
\end{remark}

\subsection{Approximation along the deterministic and stochastic dimensions} \label{s:interpolation}

In practice, we can only access the value of $\qoi$ via a numerical solver yielding a numerical
approximation of the solution $u$ of Problem \ref{pb:strong_form_yy},
which depends on a set of $D$ discretization parameters, such as the mesh-size,
the time-step, the tolerances of the numerical solvers, and others, which we denote by $h_i, i=1,\ldots,D$;
we remark that in general $D$, the number of parameters, might be
different from $d$, the number of spatial dimensions.
For each of those parameters, we introduce a sequence of discretization levels,
$h_{i,\alpha}, \alpha=1,2,\ldots$, and for each multi-index $\valpha \in \nset^D_+$, we denote by
$u^\valpha(\xx,\yy)$ the approximation of $u$ obtained from setting $h_i = h_{i,\alpha_i}$,
with the implicit assumption that $u^\valpha(\xx,\yy) \to u(\xx,\yy)$
as $\min_{1\leq i \leq D} \alpha_i \to \infty$ for $\rho$-almost every $\yy \in \Gamma$;
similarly, we also write $\qoi^\valpha(\yy) = \Theta[u^\valpha(\cdot,\yy)]$.
For instance, we could solve the problem stated in Example \ref{example:ell}
by a finite differences scheme with grid-sizes $h_{i,\alpha_i} = h_02^{-\alpha_i}$
in direction $i=1,\ldots,D$, for some $h_0 > 0$.

The discretization of $\qoi^\valpha$ over the random parameter space $\Gamma$
will consist of a suitable linear combination of tensor interpolants over $\Gamma$
based on Lagrangian polynomials. Observe that this approach is sound
only if $\qoi^\valpha$ is at least a continuous function over $\Gamma$
(the smoother $\qoi^\valpha$ is, the more effective the Lagrangian approximation will be);
for instance, for the problem stated in Example \ref{example:ell}, it can be shown under
moderate assumptions on $a(\xx,\yy)$ that $\qoi$ and $\qoi^\valpha$ are $\yy$-analytic, see, e.g.,
\cite{back.nobile.eal:optimal,cohen.devore.schwab:nterm2}; we will
return to this point in Section \ref{s:numerics}.

To derive a generic tensor Lagrangian interpolation of $\qoi^\valpha$, we first introduce the set
$\cset^0(\Gamma_n)$ of real-valued continuous functions over $\Gamma_n$,
and the subspace of polynomials of degree at most $q$ over $\Gamma_n$,
$\pset^q(\Gamma_n) \subset \cset^0(\Gamma_n)$.
Next, we consider a sequence of univariate Lagrangian interpolant
operators in each dimension $Y_n$, i.e.,
$\{\mathscr U_n^{m(\beta_n)}\}_{\beta_n \in \nset_+}$, where we refer
to the value $\beta_n$ as the ``interpolation level''.  Each interpolant
is built over a set of $m(\beta_n)$ collocation points,
$\mathscr{H}_n^{m(\beta_n)} = \{y_n^1, y_n^2 \ldots y_n^{m(\beta_n)}\} \subset \Gamma_n$,
where $m$ is a strictly increasing function, with $m(0)=0$ and $m(1)=1$,
that we call the ``level-to-nodes function''; thus, the interpolant yields
a polynomial approximation,
\[\mathscr U^{m(\beta_n)}_n : \cset^0(\Gamma_n) \rightarrow \pset^{m(\beta_n)-1}(\Gamma_n), \qquad
\mathscr U^{m(\beta_n)}_n[f](y_n) = \sum_{j=1}^{m(\beta_n)}
					\left( f(y_n^j) \prod_{k=1,k \neq j}^{m(\beta_n)} \frac{y_n - y_n^k}{y^j_n - y_n^k} \right),
\]
with the convention that $\mathscr U^{0}_n[f] = 0 \,\, \forall f \in \cset^0(\Gamma_n)$.

The $N$-variate Lagrangian interpolant can then be built by a tensorization
of univariate interpolants:
denote by $\cset^0(\Gamma)$ the space of real-valued $N$-variate continuous functions over $\Gamma$
and by $\pset^\qq(\Gamma) = \bigotimes_{n=1}^N \pset^{q_n}(\Gamma_n)$ the subspace of
polynomials of degree at most $q_n$ over $\Gamma_n$, with $\qq = (q_1,\ldots,q_N) \in \nset^N$,
and consider a multi-index $\vbeta \in \nset_+^N$ assigning the interpolation level in
each direction, $y_n$; the multivariate interpolant can then be written as
\begin{align*}
\mathscr{U}^{m(\vbeta)} : \cset^0(\Gamma) \rightarrow \pset^{m(\vbeta) -\oone}(\Gamma), \qquad
\mathscr{U}^{m(\vbeta)}[\qoi^\valpha] (\vec y)
&= \left( \mathscr{U}^{m(\beta_1)}_1 \otimes \cdots \otimes \mathscr{U}^{m(\beta_N)}_N \right) [\qoi^\valpha](\vec y).%
\end{align*}

The set of collocation points needed to build the
tensor interpolant $\mathscr{U}^{m(\vbeta)}[u] (\vec y)$
is the tensor grid $\mathscr{T}^{m(\vbeta)} = \times_{n=1}^N \mathscr{H}_n^{m(\beta_n)}$
with cardinality $ \#\mathscr{T}^{m(\vbeta)} = \prod_{n=1}^N m(\beta_n)$.
Observe that the Lagrangian interpolant immediately induces an $N$-variate quadrature formula,
$\mathscr{Q}^{m(\vbeta)} : \cset^0(\Gamma) \rightarrow \rset$,
\begin{align*}
& \mathscr{Q}^{m(\vbeta)}[\qoi^\valpha]
= \E{{\mathscr{U}}^{m(\vbeta)}[\qoi^\valpha](\vec y)}
= \sum_{j=1}^{\# \mathscr{T}^{m(\vbeta)} } \qoi^\valpha(\widehat{\yy}_j) \varpi_j,
\end{align*}
where $\widehat{\yy}_j \in \mathscr{T}^{m(\vbeta)}$ and the quadrature weights $\varpi_j$
are the expected values of the Lagrangian polynomials centered
in $\widehat{\yy}_j$, which can be computed exactly for most of the
common interpolation knots and probability measures of the random variables.

It is recommended that the collocation points $\mathscr{H}_n^{m(\beta_n)}$
to be used in each direction are chosen according to the underlying probability
measure, $\rho(y_n)dy_n$, to ensure good approximation properties
of the interpolant and quadrature operators, $\mathscr{U}^{m(\vbeta)}$ and $\mathscr{Q}^{m(\vbeta)}$.
Common choices are Gaussian quadrature points
like Gauss-Legendre for uniform measures or Gauss-Hermite for Gaussian measures,
cf. e.g., \cite{trefethen:book2013}, which are however
\emph{not nested}, i.e., $\mathscr{H}_n^{m(\beta_n)} \not\subset \mathscr{H}_n^{m(\beta_n+1)}$.
This means that they are not optimal for successive refinements %
of the interpolation/quadrature, and we will not consider them in this work.
Instead, we will work with \emph{nested} collocation points, and specifically with
Clenshaw-Curtis points \cite{nobile.eal:optimal-sparse-grids,trefethen:comparison},
that are a classical choice for the uniform measure that we are considering here;
other choices of nested points are available for uniform random variables,
e.g., the Leja points \cite{nobile.eal:optimal-sparse-grids,Chkifa:leja},
whose performance is somehow equivalent to that of Clenshaw-Curtis
for quadrature purposes, see \cite{nobile.etal:leja,narayan:Leja}.
Clenshaw-Curtis points are defined as
\begin{equation}\label{eq:CC_def}
  y_n^j=\cos\left(\frac{(j-1) \pi}{m(i_n)-1}\right), \quad 1\leq j \leq m(i_n),
\end{equation}
together with the following level-to-nodes relation, $m(i_n)$, that ensures their nestedness:
\begin{equation}\label{eq:m_CC}
m(0)=0,\,\, m(1)=1,\,\, m(i_n)=2^{i_n-1}+1.
\end{equation}
We conclude this section by introducing the following operator norm,
which acts as a ``Lebesgue constant''
from $\cset^0(\Gamma)$ to $L^2_\rho(\Gamma)$:
\begin{equation}\label{eq:leb_def}
\mathbb{M}^{m(\vbeta)} = \prod_{n=1}^N \mathbb{M}_n^{m(\beta_n)},
\quad \mbox{with } \quad
\mathbb{M}_n^{m(\beta_n)} = \sup_{\| f\|_{L^\infty(\Gamma_n)}=1} \| \mathscr{U}^{m(\beta_n)}_n f \|_{L^2_\rho(\Gamma_n)}.
\end{equation}
In particular, for the Clenshaw-Curtis points, it is possible to bound $\mathbb{M}_n^{m(\beta_n)}$ as:
\begin{equation}\label{eq:CC_leb_est}
  \mathbb{M}^{m(\vbeta)} \leq \mathbb{M}_{est}^{m(\vbeta)} = \prod_{n=1}^N \mathbb{M}_{n,est}^{m(\beta_n)},  \quad \quad
  \mathbb{M}_{n,est}^{q} =
  \begin{cases}
    1 & \mbox{ for } q=1 \\[2mm]
    \displaystyle \frac{2}{\pi}\log(q-1)+1 & \mbox{ for } q \geq 2.
  \end{cases}
\end{equation}
See \cite{nobile.eal:optimal-sparse-grids} and references therein.

\begin{remark}
  Nested collocation points have been studied also for other probability
  measures than uniform probability measures. In the very relevant case of a normal
  distribution, one possible choice is the Genz-Keister points
  \cite{genz.keister:kpnquad,back.nobile.eal:lognormal}; we mention
  also the recent work \cite{narayan:Leja} on generalized Leja points
  that can be used for arbitrary measures on unbounded domains.
\end{remark}

\section{Multi-Index Stochastic Collocation}\label{s:method}

It is easy to see that an accurate approximation of $\E{\qoi}$ by a
direct tensor technique as the one just introduced,
$\E{\qoi} \approx \mathscr{Q}^{m(\vbeta)} [\qoi^\valpha]$, might
require a prohibitively large computational effort even for moderate
values of $D$ and $N$ (what is referred to as the ``curse of
dimensionality'').  In this work, following the setting that was
presented in
\cite{nobile.eal:optimal-sparse-grids,abdullatif.etal:MultiIndexMC},
we propose the Multi-Index Stochastic Collocation as
an alternative. It can be seen as a generalization of the
telescoping sum presented in the introduction, see equations
\eqref{eq:MLMC-telescopic} and \eqref{eq:MLMC-estim}.  Denoting
$\mathscr{Q}^{m(\vbeta)} [\qoi^\valpha] = \descqoi{\valpha}{\vbeta}$,
the building blocks of such a telescoping sum are the first-order
difference operators for the deterministic and stochastic
discretization parameters, denoted respectively by
$\Delta_{i}^\textnormal{det}$ with $1\le i\le D$ and
$\Delta_{j}^\textnormal{stoc}$ with $1\le j\le N$:
\begin{align}\label{eq:misc_delta_operations}
& \Delta_{i}^\textnormal{det}[\descqoi{\valpha}{\vbeta}] =
\begin{cases}
  \descqoi{\valpha}{\vbeta} - \descqoi{\valpha-\ee_i}{\vbeta}, & \text{if }\alpha_i > 1,\\
  \descqoi{\valpha}{\vbeta} & \text{if } \alpha_i=1,
  \end{cases} \\% \label{eq:delta_det_def} \\
& \Delta_{j}^\textnormal{stoc}[\descqoi{\valpha}{\vbeta}]    =
\begin{cases}
  \descqoi{\valpha}{\vbeta} - \descqoi{\valpha}{\vbeta-\ee_j}, & \text{if }\beta_j > 1,\\
  \descqoi{\valpha}{\vbeta} & \text{if } \beta_j=1.
  \end{cases} %
\end{align}
We then define the first-order tensor difference operators,
\begin{align}
& \vec \Delta^\textnormal{det}[\descqoi{\valpha}{\vbeta}]
= \bigotimes_{i=1}^D \Delta_i^\textnormal{det}[\descqoi{\valpha}{\vbeta}]
= \Delta_1^\textnormal{det} \left[ \, \Delta_2^\textnormal{det} \left[ \, \cdots
  \Delta_D^\textnormal{det} \left[ \descqoi{\valpha}{\vbeta} \right] \, \right] \, \right] %
= \sum_{\jj \in \{0,1\}^D} (-1)^{|\jj|} \descqoi{\valpha-\jj}{\vbeta},
\label{eq:delta_det_def} \\
& \vec \Delta^\textnormal{stoc}[\descqoi{\valpha}{\vbeta}]
= \bigotimes_{j=1}^N \Delta_j^\textnormal{stoc}[\descqoi{\valpha}{\vbeta}]
= \sum_{\jj \in \{0,1\}^N} (-1)^{|\jj|} \descqoi{\valpha}{\vbeta-\jj} \, .
\label{eq:delta_stoc_def}
\end{align}
with the convention that $F_{\vec v, \vec w}=0$ whenever a component
of $\vec v$ or $\vec w$ is zero. Observe that computing
$\vec \Delta^\textnormal{det}[\descqoi{\valpha}{\vbeta}]$ actually
requires up to $2^D$ solver calls, and analogously applying
$\vec \Delta^\textnormal{stoc}[\descqoi{\valpha}{\vbeta}]$ requires
interpolating $\qoi^\valpha$ on up to $2^N$ tensor grids; for
instance, if $D=N=2$ and $\valpha,\vbeta > \oone$, we have
\begin{align*}
  &\vec \Delta^\textnormal{det} [\descqoi{\valpha}{\vbeta}]
  = \Delta^\textnormal{det}_2 \left[ \, \Delta^\textnormal{det}_1 \left[ \, \descqoi{\valpha}{\vbeta} \, \right] \, \right]
   = \Delta^\textnormal{det}_2 [ \descqoi{\valpha}{\vbeta} - \descqoi{\valpha-\ee_1}{\vbeta}]
   = \descqoi{\valpha}{\vbeta} - \descqoi{\valpha-\ee_1}{\vbeta}
   - \descqoi{\valpha-\ee_2}{\vbeta} + \descqoi{\valpha-\oone}{\vbeta}, \\[2pt]%
  &\vec \Delta^\textnormal{stoc} [\descqoi{\valpha}{\vbeta}]
  = \descqoi{\valpha}{\vbeta} - \descqoi{\valpha}{\vbeta-\ee_1}
   - \descqoi{\valpha}{\vbeta-\ee_2} + \descqoi{\valpha}{\vbeta-\oone}. \nonumber
\end{align*}
Finally, letting
$\vec \Delta[\descqoi{\valpha}{\vbeta}] =
\vec \Delta^\textnormal{stoc}[ \vec \Delta^\textnormal{det}[\descqoi{\valpha}{\vbeta}]]$,
we define the Multi-Index Stochastic Collocation (MISC) estimator of $\E{\qoi}$ as
\begin{equation}\label{eq:misc_estimator}
  \mathscr{M}_{\mathcal{I}}[\qoi]
  = \sum_{[\valpha, \vbeta] \in \mathcal I} \vec \Delta[\descqoi{\valpha}{\vbeta}]
  = \sum_{ [\valpha, \vbeta] \in \mathcal I}  c_{\valpha,\vbeta} \descqoi{\valpha}{\vbeta},
\end{equation}
where $\mathcal{I} \subset \nset_+^{D+N}$ and
$c_{\valpha,\vbeta} \in \zset$. %
Observe that many of the coefficients in \eqref{eq:misc_estimator}, $c_{\valpha,\vbeta}$,
may be zero: in particular, $c_{\valpha,\vbeta}$ is zero whenever
$[\valpha,\vbeta]+\vec{j} \in \mathcal{I}$
$\forall \vec{j}\in\{0,1\}^{N+D}$.  Similarly to the analogous sparse grid construction
\cite{nobile.eal:optimal-sparse-grids,wasi.wozniak:cost.bounds,b.griebel:acta},
we shall require that the multi-index set $\mathcal{I}$ be downward closed, i.e.,
\begin{align*}
  \forall \,[\valpha, \vbeta] \in \mathcal{I}, \quad
  \begin{cases}
    \valpha - \vec{e}_i \in \mathcal{I} \mbox{ for } 1 \leq i \leq D \mbox{ and } \alpha_i > 1,\\
    \vbeta - \vec{e}_j \in \mathcal{I} \mbox{ for } 1 \leq j \leq N \mbox{ and } \beta_j > 1.
  \end{cases}
\end{align*}

\begin{remark}
  In theory, a MISC approach could also be developed to approximate
  the entire solution $u(\xx,\yy)$ and not just the expectation of
  functionals, considering differences between consecutive interpolant
  operators, $\mathscr{U}^{m(\vbeta)}$, on the stochastic domain rather
  than differences of the quadrature operators,  $\mathscr{Q}^{m(\vbeta)}$,
  as a building block for the
  $\vec \Delta^\textnormal{stoc}$ operators, as well as considering the
  discretized solution $u^\valpha$ rather than just the quantity of
  interest, $\qoi^\valpha$, in the construction of the
  $\vec \Delta^\textnormal{det}$ operators.
\end{remark}

\subsection{A knapsack-like construction of the set $\mathcal{I}$}

The efficiency of the MISC method in equation \eqref{eq:misc_estimator}
will heavily depend on the specific
choice of the index set, $\mathcal{I}$; in the following, we will first
propose a general strategy to derive quasi-optimal sets and then
prove in Section \ref{s:complexity} a convergence result for such sets
under some reasonable assumptions.

To derive an efficient set, $\mathcal{I}$, we recast the problem of
its construction as an optimization problem, in the same spirit of
\cite{abdullatif.etal:MultiIndexMC,nobile.eal:optimal-sparse-grids,back.nobile.eal:optimal,b.griebel:acta,griebel.knapek:optimized}.
We begin by introducing the concepts of ``work contribution'', $\Delta W_{\valpha, \vbeta}$, and ``error contribution'',
$\Delta E_{\valpha, \vbeta}$, for each hierarchical surplus operator,
$\vec \Delta[\descqoi\valpha\vbeta]$. %
The work contribution measures the computational cost
(measured, e.g., as a function of the total number of degrees of freedom, or in terms of computational time)
required to add $\vec \Delta[\descqoi\valpha\vbeta]$ to $\mathscr{M}_\mathcal{I}[\qoi]$, i.e., to solve the
associated deterministic problems and to compute the corresponding interpolants over the parameter space,
cf. equations \eqref{eq:delta_det_def} and \eqref{eq:delta_stoc_def};
the error contribution measures instead how much the error $|\E{\qoi} - \mathscr{M}_\mathcal{I}[\qoi]| $
would decrease once the operator $\vec \Delta[\descqoi\valpha\vbeta]$
has been added to $\mathscr{M}_\mathcal{I}[\qoi]$. %
In formulas, we define
\[\Delta W_{\valpha, \vbeta}
= \work{\mathscr{M}_{\mathcal{I} \cup \{[\valpha,\vbeta]\}}[\qoi]} - \work{\mathscr{M}_{\mathcal{I}}[\qoi]}
= \work{\vec \Delta[\descqoi\valpha\vbeta]},\]
so that
\begin{equation}\label{eq:work_decomp}
  \work{\mathscr{M}_\mathcal{I}[\qoi]} = \sum_{[\valpha, \vbeta] \in \mathcal{I}} \Delta W_{\valpha, \vbeta},
\end{equation}
Observe that this work definition is sharp only if we think of building the MISC estimator
with an incremental approach,
i.e., we assume that adding the multi-index $(\valpha, \vbeta)$ to the index set $\mathcal I$ would not reduce the work that has to be done to evaluate the MISC estimator on the index set.
This implies that one cannot take advantage of the fact
that some of the coefficients in \eqref{eq:misc_estimator}, $c_{\valpha,\vbeta}$, that are non-zero
when considering the set $\mathcal{I}$ could become zero if the MISC estimator is instead
built considering the set $\mathcal{I} \cup \{[\valpha,\vbeta]\}$, hence it would be possible not to compute the
corresponding approximations $\descqoi{\valpha}{\vbeta}$.
This approach is discussed in Section~\ref{ss:impl}.

Similarly, we define
\[\Delta E_{\valpha, \vbeta}
 = \Big| {\mathscr{M}_\mathcal{I \cup \{[\valpha,\vbeta]\}}[\qoi]} - {\mathscr{M}_\mathcal{I}[\qoi]} \Big|
 = \left| {\vec \Delta[\descqoi\valpha\vbeta]} \right|.
\]
Thus, by construction, the error of the MISC estimator \eqref{eq:misc_estimator}
can be bounded as the sum of the error contributions not included in
the estimator $\mathscr{M}_\mathcal{I}[\qoi]$, %
\begin{align}\label{eq:error-decomp}
\error{\mathscr{M}_\mathcal{I}[\qoi]} =
|\E{\qoi} - \mathscr{M}_\mathcal{I}[\qoi]|
& = \left| {\sum_{[\valpha, \vbeta] \notin \mathcal I} \vec \Delta[\descqoi\valpha\vbeta]} \right| \nonumber \\[2mm]
 & \leq \sum_{[\valpha, \vbeta] \notin \mathcal I} \left| { \vec \Delta[\descqoi\valpha\vbeta] } \right|
  = \sum_{[\valpha, \vbeta] \notin \mathcal I} \Delta E_{\valpha, \vbeta}.
\end{align}
Consequently, a quasi-optimal set $\mathcal{I}$ can be computed by solving the following
``binary knapsack problem'' \cite{martello:knapsack}:
\begin{align}
  \mbox{maximize} &\sum_{[\valpha, \vbeta] \in \nset_+^{D+N}}\Delta E_{\valpha, \vbeta} x_{\valpha, \vbeta} \nonumber \\
  \mbox{such that }& \sum_{[\valpha, \vbeta] \in \nset_+^{D+N}}\Delta W_{\valpha, \vbeta} x_{\valpha, \vbeta} \leq W_{max}, \label{eq:knapsack} \\
  & \qquad x_{\valpha, \vbeta} \in \{0,1\}, \nonumber
\end{align}
and setting
$\mathcal{I}=\{ [\valpha, \vbeta] \in \nset_+^{D+N}: x_{\valpha,
  \vbeta} = 1\}$.
Observe that such a set is only ``quasi'' optimal since the error
decomposition \eqref{eq:error-decomp} is not an exact representation but
rather an upper bound.  The optimization problem above is well known
to be computationally intractable.  Still, an approximate greedy solution
(which coincides with the exact solution under certain hypotheses that will be
clearer in a moment) can be found if one instead allows the variables $x_{\valpha, \vbeta}$ to
assume fractional values, i.e., it is possible to include fractions of
multi-indices in $\mathcal{I}$. For this simplified problem, the resulting problem can be solved
analytically by the so-called Dantzig algorithm \cite{martello:knapsack}:
\begin{enumerate}
\item compute the ``profit'' of each hierarchical surplus, i.e., the quantity
  \[%
    P_{\valpha, \vbeta} = \frac{\Delta E_{\valpha, \vbeta}}{\Delta W_{\valpha, \vbeta}};
  \]
\item sort the hierarchical surpluses by decreasing profit;
\item add the hierarchical surpluses to $\mathscr{M}_\mathcal{I}[\qoi]$ according to such order
  until the constraint on the maximum work is fulfilled.
\end{enumerate}
Note that by construction $x_{\valpha, \vbeta}=1$ for all the multi-indices included
in the selection except for the last one, for which $x_{\valpha, \vbeta}<1$ might hold;
in other words, the last multi-index is the only one that might not be taken entirely.
However, if this is the case, we assume that we could slightly adjust the value $W_{max}$,
so that all $x_{\valpha, \vbeta}$ have integer values (see also \cite{b.griebel:acta});
observe that this integer
solution is also the solution of the original binary knapsack problem \eqref{eq:knapsack}
with the new value of $W_{max}$ in the work constraint.
Thus, if the quantities $\Delta E_{\valpha,\vbeta}$ and $\Delta W_{\valpha,\vbeta}$ were available,
the quasi-optimal index set for the MISC estimator could be computed as
\begin{equation}\label{eq:opt_set}
 \mathcal{I} = \mathcal{I}(\epsilon) =\left\{ [\valpha, \vbeta] \in \nset_+^{D+N}\::\:
   \frac{\Delta E_{\valpha,\vbeta}}{\Delta W_{\valpha,\vbeta}} \geq \epsilon \right\},
\end{equation}
for a suitable $\epsilon>0$.%

\begin{remark}\label{remark:webster_and_bieri}
  The MISC setting could in principle include the Multilevel Stochastic Collocation method
  proposed in \cite{teckentrup.etal:MLSC} as a special case, by simply considering
  a discretization of the spatial domain on regular meshes, and letting the diameter
  of each element (the mesh-size) be the only discretization parameter, i.e., $D=1$.

  However, the sparse grids to be used at each level
  are determined in \cite{teckentrup.etal:MLSC} by computing the minimal number of
  collocation points needed to balance the stochastic and spatial
  error.  This is done by relying on sparse grid error estimates; yet,
  since in general it is not possible to generate a sparse grid with a
  predefined number of points, some rounding strategy to the sparse
  grid with the nearest cardinality must be devised, which may affect
  the optimality of the multilevel strategy.  In the present work, we
  overcome this issue by relying instead on profit estimates to build
  a set of multi-indices that simultaneously prescribe the spatial
  discretization and the associated tensor grid in the stochastic
  variables.  Furthermore, only standard isotropic Smolyak sparse
  grids are considered in the actual numerical experiments in
  \cite{teckentrup.etal:MLSC} (although in principle anisotropic
  sparse grids could be used as well, provided that good convergence
  estimates for such sparse grids are available), while our
  implementation naturally uses anisotropic stochastic collocation
  methods at each spatial level.

  The MISC approach also includes as a special case the ``Sparse
  Composite Collocation Method'' developed in
  \cite{bieri:sparse.tensor.coll}, by considering again only one
  deterministic discretization parameter, i.e., $D=1$, and then
  setting
  \begin{equation}\label{eq:iso-MISC}
    \mathcal{I} = \left\{ [\alpha,\vbeta] \in \nset_+^{1+N} : \alpha + \sum_{n=1}^N \beta_n \leq w \right\},
  \end{equation}
  with $w \in \nset_+$.  In other words, the approach in \cite{bieri:sparse.tensor.coll}
  is based neither on profit nor on error balancing.
\end{remark}

\section{Complexity analysis of the MISC method}\label{s:complexity}

In this section, we assume suitable models for the error and work
contributions, $\Delta E_{\valpha,\vbeta}$ and
$\Delta W_{\valpha,\vbeta}$ (which are numerically verified in
Section \ref{s:numerics} for the problem in Example \ref{example:ell})
and then state our main convergence theorem for the MISC method built using a
particular index set, $\mathcal{I}^*$, which can be regarded as an
approximation of the quasi-optimal set introduced in the previous
section.

\begin{assumption}\label{assump:growth_of_dof}
  The discretization parameters, $h_i$, for the deterministic solver depend
  exponentially on the discretization level $\alpha_i$, and the number of collocation
  points over the parameter space grows exponentially with the level $\beta_i$:
  \[h_{i,\alpha_i}=h_0 2^{-\alpha_i} \qquad \text{and}\qquad  C_{m,low} 2^{\beta_i} \leq m(\beta_i) \leq C_{m,up} 2^{\beta_i}.\]
\end{assumption}
\begin{assumption}\label{assump:dW_dE_factor}
  The error and work contributions, $\Delta E_{\valpha,\vbeta}$ and $\Delta W_{\valpha,\vbeta}$,
  can be bounded as products of two terms,
  \[%
    \Delta E_{\valpha,\vbeta} \leq \Delta E_\valpha^\textnormal{det}  \Delta E_\vbeta^\textnormal{stoc}  \qquad \text{and}\qquad
    \Delta W_{\valpha,\vbeta} \leq \Delta W_\valpha^\textnormal{det}  \Delta
    W_\vbeta^\textnormal{stoc},  %
\]
  where $\Delta W_\valpha^\textnormal{det}$ and $\Delta E_\valpha^\textnormal{det}$ denote the cost and
  the error contributions due to the deterministic difference operator,
  $\Delta^\textnormal{det}[\descqoi{\valpha}{\vbeta}]$,
  and similarly $\Delta W_\vbeta^\textnormal{stoc}$ and $\Delta E_\vbeta^\textnormal{stoc}$ denote the
  cost and the error contribution
  due to the stochastic difference operator, $\Delta^\textnormal{stoc}[\descqoi{\valpha}{\vbeta}]$,
  cf. equations \eqref{eq:delta_det_def}-\eqref{eq:delta_stoc_def}.
\end{assumption}
\begin{assumption}\label{assump:dW_dE_model}
  The following bounds hold true for the factors appearing in Assumption \ref{assump:dW_dE_factor}:
  \begin{align}
    & \Delta W_\valpha^\textnormal{det} \leq C_{\mathrm{work}}^{\textnormal{det}} \prod_{i=1}^D (h_{i,\alpha_i})^{-\widetilde{\gamma}_i} , \label{eq:deltaW_alpha_est}\\
    & \Delta E_\valpha^\textnormal{det} \leq C_{\mathrm{error}}^{\textnormal{det}} \prod_{i=1}^D (h_{i,\alpha_i})^{\widetilde{\dRate{}_i} } , \label{eq:dE_alpha_est}\\
    & \Delta W_\vbeta^\textnormal{stoc} \leq \widetilde{C}_{\mathrm{work}}^{\textnormal{stoc}} \prod_{n=1}^N m(\beta_n)
    \leq C_{\mathrm{work}}^{\textnormal{stoc}}  \prod_{n=1}^N 2^{\beta_n},\label{eq:deltaW_beta_est}  \\
    & \Delta E_\vbeta^\textnormal{stoc} \leq C_{\mathrm{error}}^{\textnormal{stoc}}\, e^{- \sum_{i=1}^N \widetilde{g}_i m(\beta_i)}, \label{eq:deltaE_beta_est}%
  \end{align}
  for some rates $\widetilde{\gamma}_i, \widetilde{\dRate{}}_i, \widetilde{g}_i > 0$.
\end{assumption}

With these assumptions, we are now ready to state our main
theorem. The proof is technical and we therefore place it in the
appendix. The proof is based on summing the error contributions
outside a particular index set, $\mathcal I^*$, and the work
contributions inside the same index set. This can be seen as a
weighted cardinality argument in finite dimensions. See also
\cite{griebel:infdim, griebel:tensor} for similar arguments for
different choices of finite and infinite dimensional index sets.
\begin{restatable}[MISC computational complexity]{theorem}{misccomplexity}
\label{thm:misc_complexity}
Under Assumptions \ref{assump:growth_of_dof} to \ref{assump:dW_dE_model}, the bounds
for the factors appearing in Assumption \ref{assump:dW_dE_factor} can be equivalently rewritten as
  \begin{subequations}
  \begin{align}
    \label{eq:deltaW_ass}
    & \Delta W_{\valpha,\vbeta} \leq \CWork e^{\sum_{i=1}^D \gamma_i \alpha_i} e^{ \delta |\vbeta|}, \\
    \label{eq:deltaE_ass}
    & \Delta E_{\valpha,\vbeta} \leq \CError e^{ - \sum_{i=1}^D \dRate{i}\alpha_i} e^{- \sum_{j=1}^N g_j \exp(\delta {\beta_j})},
  \end{align}
  \end{subequations}
  with $\gamma_i = \widetilde{\gamma}_i\log 2$, $\dRate{i} =\widetilde{\dRate{}}_i\log 2$,
  $\delta = \log 2$ and $g_i = \widetilde{g}_i C_{m,low}$. Define the following set
  \begin{equation}\label{eq:opt_set_detailed_logform}
    \mathcal{I}^*(L) =
    \left\{ [\valpha, \vbeta] \in \nset_+^{D+N}\::\:
      \sum_{i=1}^D (\dRate{i}+\gamma_i) \alpha_i + \sum_{i=1}^N (\delta \beta_i +  g_i e^{\delta \beta_i} ) \leq L \right\} \mbox{ with } L \in \rset_+.
  \end{equation}
  Then there exists a constant $\mathscr C_{\text{W}}$ such that, for any $W_{\max}$ satisfying
  \begin{equation}
    \label{eq:maxW_ass}
    W_{\max} \geq \mathscr C_{\text{W}} \exp\left(\chi\right),
  \end{equation}
  and choosing $L$ as
  \begin{equation}\label{eq:opt-L}
  L = L(W_{\max}) = \frac{1}{\chi} \left(
    \log\left(\frac{W_{\max}}{\mathscr{C}_{\text{W}}}\right) - (\mathfrak{z}-1)
    \log\left(\frac{1}{\chi}\log\left(\frac{W_{\max}}{\mathscr{C}_{\text{W}}}\right)\right)
		\right),
  \end{equation}
  with $\vXi = \left( \frac{\gamma_1}{\gamma_1 + r_1},\ldots,\frac{\gamma_D}{\gamma_D + r_D} \right)$,
  $\chi = \max(\vXi)$,  $\zeta=\min_{i=1,\ldots,D} \frac{\dRate{i}}{\gamma_i}$ and
  $\mathfrak{z} = \#\{i=1,\ldots D \::\: \frac{\dRate{i}}{\gamma_i} =
  \zeta\}$, the MISC estimator $\mathscr M_{\mathcal I^*(L(W_{\max}))}$ satisfies
  \begin{subequations}
    \label{eq:misc_complexity}
    \begin{align}
      &\work{\mathscr M_{\mathcal I^*(L(W_{\max}))}} \leq W_{\max}, \label{eq:misc_complexity_work}
      \\[2mm]
      &\limsup_{W_{\max} \uparrow \infty} \frac
{\error{\mathscr M_{\mathcal I^*(L(W_{\max}))}}}{W_{\max}^{-\zeta}        \left(\log\left(W_{\max}\right)\right)^{\left(\zeta+1\right)\left(\mathfrak{z}-1\right)}}
        = \mathscr C_{\text{E}} < \infty. \label{eq:misc_complexity_error}
    \end{align}
  \end{subequations}
\end{restatable}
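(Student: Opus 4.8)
The plan is to first put the assumed bounds into the exponential form \eqref{eq:deltaW_ass}--\eqref{eq:deltaE_ass} and then reduce the two claims to weighted cardinality estimates over the set $\mathcal I^*(L)$ in \eqref{eq:opt_set_detailed_logform} and over its complement. The rewriting is immediate: substituting $h_{i,\alpha_i}=h_0 2^{-\alpha_i}$ in \eqref{eq:deltaW_alpha_est}--\eqref{eq:dE_alpha_est} turns the powers of $h_{i,\alpha_i}$ into $e^{\pm\gamma_i\alpha_i}$ (with $\gamma_i=\widetilde{\gamma}_i\log2$, $\dRate{i}=\widetilde{\dRate{}}_i\log2$), $\prod_n 2^{\beta_n}=e^{\delta|\vbeta|}$ with $\delta=\log2$ handles \eqref{eq:deltaW_beta_est}, and $m(\beta_i)\ge C_{m,low}2^{\beta_i}$ used in the exponent of \eqref{eq:deltaE_beta_est} gives $e^{-\widetilde{g}_i m(\beta_i)}\le e^{-g_i e^{\delta\beta_i}}$ with $g_i=\widetilde{g}_i C_{m,low}$; the leftover multiplicative constants are absorbed into $\CWork,\CError$. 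By \eqref{eq:work_decomp} and \eqref{eq:error-decomp} it then suffices to bound $\sum_{[\valpha,\vbeta]\in\mathcal I^*(L)}\Delta W_{\valpha,\vbeta}$ from above and $\sum_{[\valpha,\vbeta]\notin\mathcal I^*(L)}\Delta E_{\valpha,\vbeta}$ from above. Throughout I write $c_i:=\dRate{i}+\gamma_i$, $\Xi_i:=\gamma_i/c_i$, $\Phi_{\mathrm{stoc}}(\vbeta):=\sum_{j=1}^N(\delta\beta_j+g_j e^{\delta\beta_j})\ge0$, $\mathcal D_1:=\{i:\Xi_i=\chi\}$, $\mathcal D_2:=\{1,\ldots,D\}\setminus\mathcal D_1$. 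Since $\Xi_i=1/(1+\dRate{i}/\gamma_i)$, one has $\chi=\max_i\Xi_i=1/(1+\zeta)$, the maximum is attained exactly on $\mathcal D_1$ (so $\#\mathcal D_1=\mathfrak z$), $1-\zeta\chi=\chi$, and $1-\Xi_i=\zeta\chi$ on $\mathcal D_1$ while $1-\Xi_i=\zeta\chi+\eta_i$ with $\eta_i>0$ on $\mathcal D_2$; these identities are what drive the estimate.

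\textbf{Work bound.} By \eqref{eq:deltaW_ass}, $\work{\mathscr M_{\mathcal I^*(L)}}\le\CWork\sum_{[\valpha,\vbeta]\in\mathcal I^*(L)}e^{\sum_i\gamma_i\alpha_i}e^{\delta|\vbeta|}$. For fixed $\vbeta$ the admissible $\valpha$ satisfy $\sum_i c_i\alpha_i\le R:=L-\Phi_{\mathrm{stoc}}(\vbeta)\ge0$; splitting off the $\mathcal D_2$-coordinates (whose residual weights $e^{-(\chi-\Xi_i)c_i\alpha_i}$ are summable since $\Xi_i<\chi$ there) and applying to the $\mathfrak z$-dimensional $\mathcal D_1$-block a ``one new lattice point per unit level'' count, $\sum_{\valpha_{\mathcal D_1}:\,\sum_{i\in\mathcal D_1}c_i\alpha_i\le t}e^{\chi\sum_{i\in\mathcal D_1}c_i\alpha_i}\le C(1+t)^{\mathfrak z-1}e^{\chi t}$, one finds the $\valpha$-sum is $\le C(1+L)^{\mathfrak z-1}e^{\chi(L-\Phi_{\mathrm{stoc}}(\vbeta))}$. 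Since $\sum_{\vbeta\in\nset_+^N}e^{\delta|\vbeta|-\chi\Phi_{\mathrm{stoc}}(\vbeta)}=\sum_{\vbeta}e^{(1-\chi)\delta|\vbeta|-\chi\sum_j g_j e^{\delta\beta_j}}$ converges (each univariate factor converges because of the doubly-exponential term), $\work{\mathscr M_{\mathcal I^*(L)}}\le\mathscr{C}_{\text{W}}(1+L)^{\mathfrak z-1}e^{\chi L}$ for a suitable $\mathscr{C}_{\text{W}}$. With $L=L(W_{\max})$ as in \eqref{eq:opt-L} one checks $e^{\chi L}=\frac{W_{\max}}{\mathscr{C}_{\text{W}}}\big(\frac1\chi\log\frac{W_{\max}}{\mathscr{C}_{\text{W}}}\big)^{-(\mathfrak z-1)}$ and $1+L\le\frac1\chi\log\frac{W_{\max}}{\mathscr{C}_{\text{W}}}$, hence $(1+L)^{\mathfrak z-1}e^{\chi L}\le W_{\max}/\mathscr{C}_{\text{W}}$, which is \eqref{eq:misc_complexity_work}; \eqref{eq:maxW_ass} is exactly the requirement that $\frac1\chi\log\frac{W_{\max}}{\mathscr{C}_{\text{W}}}\ge1$, so that $L$ is well defined and positive (enlarging $\mathscr{C}_{\text{W}}$ if necessary).

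\textbf{Error bound.} By \eqref{eq:deltaE_ass}, $\error{\mathscr M_{\mathcal I^*(L)}}\le\CError\sum_{\Phi(\valpha,\vbeta)>L}e^{-\Psi(\valpha,\vbeta)}$, with $\Phi:=\sum_i c_i\alpha_i+\Phi_{\mathrm{stoc}}(\vbeta)$ the functional appearing in \eqref{eq:opt_set_detailed_logform} and $\Psi:=\sum_i\dRate{i}\alpha_i+\sum_j g_j e^{\delta\beta_j}$. Using $\dRate{i}=(1-\Xi_i)c_i$, I factor $e^{-\Psi}=e^{-\zeta\chi\sum_{i\in\mathcal D_1}c_i\alpha_i}\,e^{-\sum_{i\in\mathcal D_2}(\zeta\chi+\eta_i)c_i\alpha_i}\,e^{-\sum_j g_j e^{\delta\beta_j}}$, rewrite $\{\Phi>L\}$ as $\{\sum_{i\in\mathcal D_1}c_i\alpha_i>S\}$ with $S:=L-\sum_{i\in\mathcal D_2}c_i\alpha_i-\Phi_{\mathrm{stoc}}(\vbeta)\le L$, and bound the inner tail $\sum_{\valpha_{\mathcal D_1}:\,\sum_{i\in\mathcal D_1}c_i\alpha_i>S}e^{-\zeta\chi\sum_{i\in\mathcal D_1}c_i\alpha_i}\le C(1+L)^{\mathfrak z-1}e^{-\zeta\chi S}$. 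Substituting $e^{-\zeta\chi S}=e^{-\zeta\chi L}e^{\zeta\chi\sum_{i\in\mathcal D_2}c_i\alpha_i}e^{\zeta\chi\Phi_{\mathrm{stoc}}(\vbeta)}$ and using $1-\zeta\chi=\chi$, the remaining sums $\sum_{\valpha_{\mathcal D_2}}e^{-\sum_{i\in\mathcal D_2}\eta_i c_i\alpha_i}$ and $\sum_{\vbeta}e^{\zeta\chi\delta|\vbeta|-\chi\sum_j g_j e^{\delta\beta_j}}$ are both finite, so $\error{\mathscr M_{\mathcal I^*(L)}}\le C\,\CError(1+L)^{\mathfrak z-1}e^{-\zeta\chi L}$. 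Finally \eqref{eq:opt-L} gives $e^{-\zeta\chi L}=\big(\frac{W_{\max}}{\mathscr{C}_{\text{W}}}\big)^{-\zeta}\big(\frac1\chi\log\frac{W_{\max}}{\mathscr{C}_{\text{W}}}\big)^{\zeta(\mathfrak z-1)}$ and $1+L$ is asymptotically equivalent to $\frac1\chi\log W_{\max}$; collecting the powers of $\log W_{\max}$ produces the exponent $(\zeta+1)(\mathfrak z-1)$, so $\error{\mathscr M_{\mathcal I^*(L(W_{\max}))}}$ is bounded by a constant multiple of $W_{\max}^{-\zeta}(\log W_{\max})^{(\zeta+1)(\mathfrak z-1)}$ for all large $W_{\max}$, which is \eqref{eq:misc_complexity_error} (the $\limsup$ being finite and at most that constant).

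\textbf{Where the difficulty lies.} The delicate step is producing exactly the exponent $\mathfrak z-1$ in the logarithmic factors — rather than a crude $D-1$, or spurious $(\log)^N$ factors coming from the $N$ stochastic directions. This forces the splitting of the deterministic indices into the argmin set $\mathcal D_1$ and its complement, the identity $\chi=1/(1+\zeta)$ so that $\dRate{i}-\zeta\chi c_i$ vanishes precisely on $\mathcal D_1$ and is strictly positive off it, and the exact matching of the $(1+L)^{\mathfrak z-1}$ factor in the work estimate with the tailor-made logarithmic correction built into the choice \eqref{eq:opt-L} of $L(W_{\max})$. The mixed-exponential shape $\delta\beta_j+g_j e^{\delta\beta_j}$ of the constraint in \eqref{eq:opt_set_detailed_logform} is precisely what makes every sum over $\vbeta$ converge to a $W_{\max}$-independent constant; carrying this structure correctly through both weighted cardinality estimates is the bookkeeping-heavy part of the proof.
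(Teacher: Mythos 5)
Your proposal is correct and rests on the same skeleton as the paper's proof --- sum the work contributions \eqref{eq:deltaW_ass} over $\mathcal I^*(L)$, sum the error contributions \eqref{eq:deltaE_ass} over its complement, and observe that the polynomial degree in $L$ is governed by the multiplicity $\mathfrak z$ of the extremal ratio $\dRate{i}/\gamma_i$ --- but the internal machinery differs in a genuine way. The paper converts the lattice sums to integrals (Lemma \ref{lemma:bound-sum-with-int}), changes variables to $\overline{\alpha_i}=(\dRate{i}+\gamma_i)(\alpha_i-1)$, $\overline{\beta_j}=g_je^{\delta(\beta_j-1)}$, and then invokes two bespoke integral lemmas (the $\mathfrak A_D$ and $\mathfrak B_D$ bounds); crucially, its error estimate splits the complement of $\mathcal I^*(L)$ into two regions (deterministic part alone already exceeding $L$, versus the stochastic part pushing the total over $L$) and handles the second via Lemma \ref{lemma:bound-inf-sum-exp-lin}, which introduces an intermediate factor $(H+1)^{2N-1}$ that is only later absorbed by $\mathfrak A_D(\vXi,2N-1)$. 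You instead work directly with lattice sums, split the deterministic coordinates into the argmax set $\mathcal D_1$ and its complement from the outset, and run a single conditional tail estimate over the $\mathcal D_1$-block; the $\vbeta$-dependence is absorbed into a convergent, $L$-independent sum before the tail bound is taken, so the spurious polynomial in $N$ never appears. Both routes land on $\work{}\lesssim e^{\chi L}L^{\mathfrak z-1}$ and $\error{}\lesssim e^{-\zeta\chi L}L^{\mathfrak z-1}$ and the same final substitution of \eqref{eq:opt-L}. One small bookkeeping point: your work bound carries $(1+L)^{\mathfrak z-1}$ where the paper has $L^{\mathfrak z-1}$, so the verification that \eqref{eq:maxW_ass} alone yields \eqref{eq:misc_complexity_work} is not quite immediate for $\mathfrak z\ge2$; as you note, this is fixed by enlarging $\mathscr C_{\text{W}}$ (e.g.\ using $(1+L)^{\mathfrak z-1}\le 2^{\mathfrak z-1}L^{\mathfrak z-1}$ for $L\ge1$ and absorbing the factor), which is legitimate since the theorem only asserts existence of the constant.
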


\begin{remark}\label{rem:this-is-the-set}
  The set $\mathcal{I}^*$ proposed in Theorem \ref{thm:misc_complexity} can be obtained
  by assuming that the bounds in equations \eqref{eq:deltaW_ass} and \eqref{eq:deltaE_ass}
  are actually equalities and by using the definition of the quasi-optimal set \eqref{eq:iso-MISC}:
  \begin{align*}
    \mathcal{I}^*
    & = \left\{ [\valpha,\vbeta] \in \nset_+^{D+N} : \frac{\Delta E_{\valpha,\vbeta}}{\Delta W_{\valpha,\vbeta}} \geq \epsilon \right \} \\
    & = \left\{ [\valpha,\vbeta] \in \nset_+^{D+N} :
      \frac{e^{ - \sum_{i=1}^D \dRate{i}\alpha_i} e^{- \sum_{j=1}^N g_j \exp(\delta {\beta_j})}}
      {e^{\sum_{i=1}^D \gamma_i \alpha_i} e^{ \delta |\vbeta|}} \geq \epsilon \right \} \\
    & = \left\{ [\valpha, \vbeta] \in \nset_+^{D+N}\::\:
      \sum_{i=1}^D (\dRate{i}+\gamma_i) \alpha_i + \sum_{i=1}^N (\delta \beta_i +  g_i e^{\delta \beta_i} ) \leq L \right\},
  \end{align*}
  where the last equality holds with $L = - \log \epsilon$.
\end{remark}

\begin{remark}\label{rem:only-space-matters}
  Refining along the spatial or the stochastic variables has different
  effects on the error of the MISC
  estimator. %
  Indeed, due to the double exponential
  $e^{- \sum_{j=1}^N g_j \exp(\delta {\beta_j})}$ in
  \eqref{eq:deltaE_ass}, the stochastic contribution to the error will
  quickly fade to zero, which in turn implies that most of the work
  will be used to reduce the deterministic error. This is confirmed
  by the fact that the error convergence rate in Theorem \ref{thm:misc_complexity}
  only depends on $\gamma_i$ and $\dRate{i}$, i.e.,
  the cost and error rates of the deterministic solver, respectively.
  This observation coincides with that in \cite[page 2299]{bieri:sparse.tensor.coll}:
  ``since the stochastic error decreases exponentially, the convergence rate
    should tend towards the algebraic rate of the spatial
    discretization [...]; see Proposition 3.8''.
  Compared with the method proposed in \cite{bieri:sparse.tensor.coll},
  MISC takes greater advantage of this fact, since it is based
  on an optimization procedure, cf. equation \eqref{eq:opt_set};
  this performance improvement is well documented by the comparison
  between the two methods shown in the next section.
  Figure \ref{fig:MISC_set} shows the multi-indices included in
  $\mathcal{I}$ according to \eqref{eq:opt_set_detailed_logform}
  for increasing values of $L$, for a problem with $N=D=1$, $\gamma_i=1,r=2$, and $g=1.5$:
  as expected, the shape of $\mathcal{I}$ becomes more and more curved as $L$ grows, due to this
  lack of balance between the stochastic and deterministic directions.
\begin{figure}[h]
  \centering
  \includegraphics[width=0.5\textwidth]{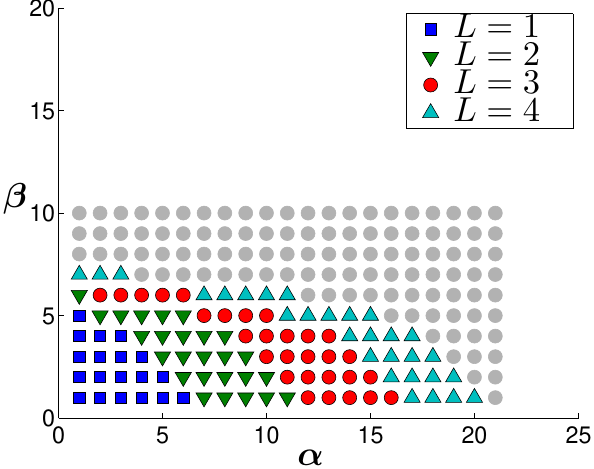}
  \caption{Index sets $\mathcal{I}(L)$ for $D=N=1$, according to
    equation \eqref{eq:opt_set_detailed_logform}.}
  \label{fig:MISC_set}
\end{figure}
\end{remark}

\begin{remark}
  Theorem~\ref{thm:misc_complexity} is only valid in case
  Assumptions~\ref{assump:wellposed}--~\ref{assump:dW_dE_model} are
  true. In the next section we motivate these assumption for a
  specific elliptic problem that we use for numerically testing the
  MISC method. However, we stress that deriving bounds on the error
  and work contributions is problem-dependent and the corresponding
  analysis must be carried out in each case.  Moreover, under a
  different set of assumption the complexity theorem would have to be
  rewritten accordingly.
\end{remark}

\section{Example and numerical evidence}\label{s:numerics}

In this section, we test the effectiveness of the MISC approximation on some instances
of the general elliptic equation \eqref{eq:ell_PDE_strong_yy_example} in Example \ref{example:ell};
more precisely, we consider a problem with one physical dimension ($d=1$)
as well as a more challenging problem with three dimensions ($d=3$);
in both cases, we set $\mathscr{B} = [0,1]^d$, $\forcing(\xx)=1$.
As for the random diffusion coefficient, we set %
\begin{equation}\label{eq:log-uniform-a}
      a(\xx,\yy)= e^{\gamma_N(\xx,\yy)}, \quad
      \gamma_N(\xx,\yy) = \sum_{n=1}^N \lambda_n \psi_n(\xx) y_n,
\end{equation}
where $y_n$ are uniform random variables over $\Gamma_n = [-1,1]$, $\lambda_n = \sqrt{3} \exp(-n)$
and take $\psi_n$ to be a tensorization of trigonometric functions.
More precisely, we define the function
\[
\phi_n(x) =
\begin{cases}
\displaystyle \sin\left(\frac{n}{2} \pi x  \right) & \text{if $n$ is even} \\[8pt]
\displaystyle \cos\left(\frac{n-1}{2} \pi x\right) & \text{if $n$ is odd}
\end{cases}
\]
and set $\psi_n(x) = \phi_n(x)$ if $d=1$. If $d=3$, we take
$\psi_n(\vec x) = \phi_{i(n)}(x_1)\phi_{j(n)}(x_2)\phi_{k(n)}(x_3)$ for some
indices ${i(n), j(n), k(n)}$ detailed in Table~\ref{tab:modes}.
Observe that the boundedness of the supports of the random variables $y_n$
guarantees the existence of the two bounding constants
in equation \eqref{eq:bounding_a}, $a_{min}$ and $a_{max}$,
that in turn assures the well posedness of the problem.
Finally, the quantity of interest is defined as
\begin{equation}\label{eq:qoi_def}
  \qoi(\yy) = \int_\mathscr{B} u(\xx,\yy) Q(\xx) d\xx,
  \quad Q(\xx) = \frac{1}{(\sigma\sqrt{2\pi})^d}\exp \left( -\frac{\|\xx-\xx_0\|_2^2}{2\sigma^2} \right)
\end{equation}
with $\sigma=0.16$ and locations $\xx_0=0.3$ for $d=1$ and
$\xx_0=[0.3, 0.2, 0.6]$ for $d=3$. We also make the choice $h_0=1/3$
in Assumption~\ref{assump:growth_of_dof}. With these values and using
the coarsest discretization, $h_0=1/3$, in all dimensions, the
coefficient of variation of the quantity of interest can be
approximated to be between $90\%$ and $100\%$ depending on the number
of dimensions, $d$, and the number of random variables, $N$, that we
consider below.

\begin{table}[h]
\centering
\begin{tabular}{l|rrrrrrrrrr}
$n$ & 1 & 2 & 3 & 4 & 5 & 6 & 7 & 8 & 9 & 10 \\
\hline
$i(n)$ & 1 & 2 & 1 & 1 & 3 & 2 & 2 & 1 & 1 & 1 \\
$j(n)$ & 1 & 1 & 2 & 1 & 1 & 2 & 1 & 3 & 2 & 1 \\
$k(n)$ & 1 & 1 & 1 & 2 & 1 & 1 & 2 & 1 & 2 & 3 \\
\hline
\end{tabular}
\caption{Included functions for $d=3$ in \eqref{eq:log-uniform-a}. Here
  $\psi_n(\vec x) = \phi_{i(n)}(x_1)\phi_{j(n)}(x_2)\phi_{k(n)}(x_3)$.}\label{tab:modes}
\end{table}

\subsection{Verifying bounds on work and error contributions}
In this subsection we discuss the validity of Assumptions \ref{assump:growth_of_dof}
to \ref{assump:dW_dE_model}, upon which the MISC convergence theorem is based.
To this end, we analyze separately the properties of the deterministic solver and of
the collocation method applied to the problem just introduced.

\paragraph{Deterministic solver}
The deterministic solver considered in this work consists of a tensorized
finite difference solver, with the grid size along each direction,
$x_1,\ldots,x_d$, defined by $h_{i,\alpha_i} = h_0 2^{-\alpha_i}$ and no other discretization parameters are considered:
therefore, $D=d$, Assumption \ref{assump:growth_of_dof} is satisfied, and,
due to the Dirichlet boundary conditions prescribed for $u$,
the overall number of degrees of freedom of the corresponding finite difference solution is
$\prod_{i=1}^D \left( \frac{1}{h_{i,\alpha_i}}-1 \right)
\leq \prod_{i=1}^D \left( \frac{1}{h_{i,\alpha_i}} \right)$.
The associated linear system is solved with the GMRES method. We have numerically fitted the parameters, $\vartheta$ and
  $C_{\mathrm{GMRES}}$, in the model:
\[\work{\qoi_\valpha} \leq C_{\mathrm{GMRES}}  \prod_{i=1}^D
  (h_{i,\alpha_i})^{-\vartheta},\]
for each individual tensor grid solve and found that $\vartheta=1$
gives a good fit in our numerical experiments. From this we can
recover the rates $\{\tilde{\gamma}_i\}_{i=1}^D$ and the constant
$C_{\mathrm{work}}^{\textnormal{det}}$ in \eqref{eq:deltaW_alpha_est}
with the following argument: since computing
$\vec \Delta^\textnormal{det}[\qoi^\valpha]$ requires up to $2^D$
solver calls, each on a different grid (cf. equation
\eqref{eq:delta_det_def}), we have
\begin{align*}
\Delta W_\valpha^\textnormal{det}
& = \work{\vec \Delta^\textnormal{det}[ \qoi_\valpha ]}
= \sum_{\jj \in \{0,1\}^D} \work{\qoi_{\valpha-\jj}} \\
& \leq C_{\mathrm{GMRES}}\sum_{\jj \in \{0,1\}^D} \prod_{i=1}^D \left( h_0 2^{-(\alpha_i-j_i)}\right)^{-\vartheta} \\
& = C_{\mathrm{GMRES}} \left( \prod_{i=1}^D \left( h_0 2^{-\alpha_i}\right)^{-\vartheta} \right) \sum_{\jj \in \{0,1\}^D} \prod_{i=1}^D 2^{-j_i \vartheta} \\
& = C_{\mathrm{GMRES}} (1+ 2^{-\vartheta})^D \prod_{i=1}^D  (h_{i,\alpha_i})^{-\vartheta},
\end{align*}
i.e., bound \eqref{eq:deltaW_alpha_est} is verified with
$\widetilde{\gamma}_i = \vartheta, \forall i=1,\ldots,D$ and
$C_{\mathrm{work}}^{\textnormal{det}} = C_{\mathrm{GMRES}} (1+
2^{-\vartheta})^D$.
Hence, the sum of costs of the solver calls is proportional to the
cost of the call on the finest grid.

Concerning the error contribution $\Delta E_\valpha^\textnormal{det}$, we observe numerically
that bound \eqref{eq:dE_alpha_est} holds true in practice with $\widetilde{\dRate{}}_i=2$,
$i=1,\ldots,D$, due to the fact that $a \in C^{\infty}(\mathscr{B})$
for $\rho$-almost every $\yy \in \Gamma$,
$\forcing \in C^{\infty}(\mathscr{B})$
and the function $Q$ appearing in the quantity of interest \eqref{eq:qoi_def}
is also infinitely differentiable, confined in a small region inside the domain
and zero up to machine precision on the boundary.
In more detail, assuming for a moment that Assumption \ref{assump:dW_dE_factor} is valid
(we will numerically verify it later in this section),
in Figure \ref{fig:check_rates_det} we show the value of
$\Delta E_{\valpha,\vbeta} = \Delta E_\valpha^\textnormal{det} \Delta E_\vbeta^\textnormal{stoc}$ for fixed $\vbeta=\oone$
and variable $\valpha = j \bar{\valpha} + \oone, j=1,2,\ldots$,
as well as the corresponding value of the bound \eqref{eq:dE_alpha_est} for $\Delta E_\valpha^\textnormal{det}$.
The line obtained by choosing $\bar{\valpha} = [1,\, 0,\, 0]$ confirms that the size of $\Delta E_\valpha^\textnormal{det}$
indeed decreases exponentially fast with respect to $\alpha_1$, and by fitting the
computed values of $\Delta E_\valpha^\textnormal{det}$
we obtain that the convergence rate is $\widetilde{\dRate{}}_j=2$, as previously mentioned;
analogous conclusions can be obtained for $\alpha_2$ and $\alpha_3$
by setting $\bar{\valpha} = [0,\, 1,\, 0]$ (shown in Figure \ref{fig:check_rates_det})
and $\bar{\valpha} = [0,\, 0,\, 1]$ (not shown). Most importantly,
confirmation of the product structure of $\Delta E_\valpha^\textnormal{det}$ can be obtained by observing, e.g.,
the decay of $\Delta E_\valpha^\textnormal{det}$ for $\bar{\valpha} = [1,\, 1,\, 0]$ and $\bar{\valpha} = [1,\, 1,\,1]$.

\begin{figure}
  \subfigure[For fixed ${\vbeta=\oone}$ and variable
  ${\valpha = k\bar{\valpha}+\oone}$.\label{fig:check_rates_det}]
  {\includegraphics[page=1,width=0.48\textwidth]{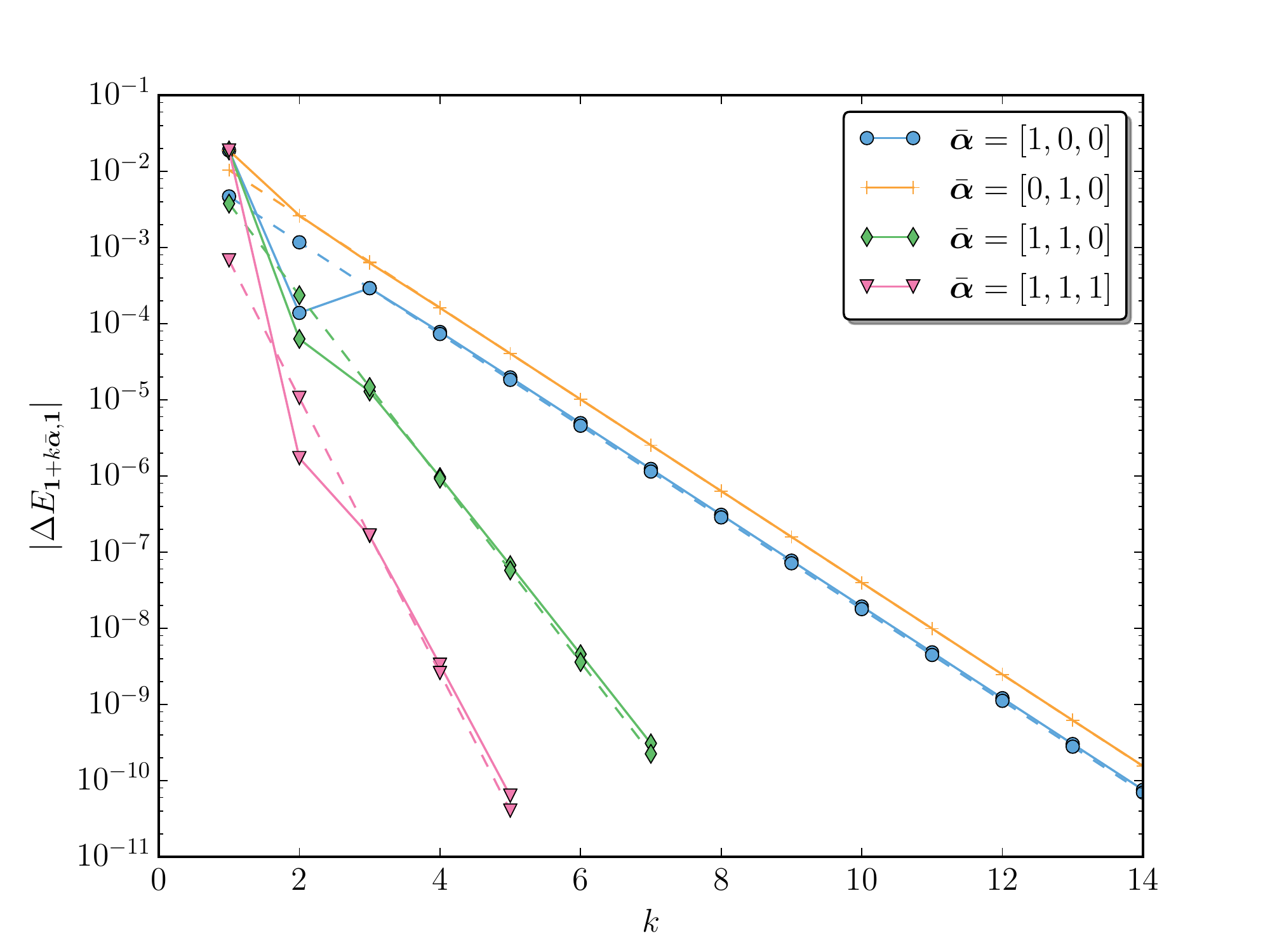}}
  \subfigure[For fixed ${\valpha=\oone}$ and variable
  ${\vbeta = k\bar{\vbeta}+\oone}$.
  \label{fig:check_rates_stoc}]
  {\includegraphics[page=2,width=0.48\textwidth]{COOK_d3_N5_rates.pdf}}
  \caption{Verifying the validity of the bound \eqref{eq:deltaE_ass}
    for the value of $|\Delta E_{\valpha,\vbeta}|$ for the test case
    with $D=3$ and $N=5$.  The {\em dashed} lines are based on the model
    in \eqref{eq:deltaE_ass} with ${\tilde{\dRate{}}_i=2}$ for all
    $i=1,2,3$ and $g_j$ as in Table~\ref{tab:g_values} for
    $j=1,\ldots, 5$. The {\em solid} lines are based on computed
    values. }
\end{figure}

\paragraph{Stochastic discretization} The interpolation over the parameter space is based
on the tensorized Lagrangian interpolation technique with Clenshaw-Curtis
points explained in Section \ref{s:interpolation}, cf. eqs. \eqref{eq:CC_def} and \eqref{eq:m_CC}.
In particular, due to the nestedness of the Clenshaw-Curtis points,
adding the operator $\Delta^\textnormal{stoc}[\descqoi{\valpha}{\vbeta}]$ to the MISC estimator will require
$\Delta W_\vbeta^\textnormal{stoc} = \prod_{j=1}^N \big( m(\beta_j) - m(\beta_j-1) \big)$ new collocation points,
which in view of equation \eqref{eq:m_CC}  can be bounded as
\[   m(\beta_j) - m(\beta_j-1) =
  \begin{cases}
    1 \mbox{ if }  \beta_j=1 \\
    2 \mbox{ if } \beta_j=2 \\
    2^{\beta_j-2}, \mbox{ if } \beta_j>2,
  \end{cases}
  \leq 2^{\beta_j{-1}}, \quad \forall j=1,2,\ldots, %
\]
provided that the set $\mathcal{I}$ is downward closed: Assumption \ref{assump:growth_of_dof}
and bound \eqref{eq:deltaW_beta_est} in Assumption \ref{assump:dW_dE_model} are thus verified.
Observe that the nestedness of the Clenshaw-Curtis knots is a key property here:
indeed, if the nodes are not nested $\Delta W_\vbeta^\textnormal{stoc}$ is not uniquely defined, i.e., it
depends on the set $\mathcal{I}$ to which  $\Delta^\textnormal{stoc}[\descqoi{\valpha}{\vbeta}]$ is being added,
see, e.g., \cite[Example 1 in Section 3]{nobile.eal:optimal-sparse-grids}.

Finally, to discuss the validity of bound \eqref{eq:deltaE_beta_est} for $\Delta E_\vbeta^\textnormal{stoc}$,
we rely on the theory developed in our previous works
\cite{back.nobile.eal:optimal,nobile.eal:optimal-sparse-grids}.
We begin by introducing the Chebyshev polynomials of the first kind
$\ortp_{q}(y)$ for $y \in [-1,1]$, which are defined by the relation
\[\ortp_{q}(\cos(\theta))
= \cos(q \theta), \quad 0 \leq \theta \leq \pi, \quad q \in \nset.\]
Then, for any multi-index $\qq \in \nset^N$, we consider the
$N$-variate Chebyshev polynomials $\oortp_\qq(\yy) = \prod_{n=1}^N \ortp_{q_n}(y_n)$
and introduce the spectral expansion of $f:[-1,1]^N\rightarrow \rset$
over  $\{\oortp_\qq\}_{\qq \in \nset^N}$,
\[%
f(\yy) = \sum_{\qq \in \nset^N} f_\qq \oortp_\qq(\yy),
\quad
f_\qq = \int_\Gamma f(\yy) \oortp_\qq(\yy) \prod_{n=1}^N
\frac{1}{\sqrt{1-y_n^2}} d\yy,
\]
Next, given any $\ellrad_{1},\ellrad_{2},\ldots,\ellrad_{N}>1$
we introduce the Bernstein polyellipse
$\mcE_{\ellrad_1,\ldots,\ellrad_N} =\prod_{n=1}^N \mcE_{n,\ellrad_n}$,
where $\mcE_{n,\ellrad_n}$ denotes the ellipses in the complex plane defined as
\begin{align*}%
  \mcE_{n,\ellrad_n} = \bigg\{ z_n \in \mathbb{C} : \,\,
  \real{z} \leq \frac{\ellrad_{n} + \ellrad_{n}^{-1}}{2}\cos{\phi}, \,\,\,
  \im{z} \leq \frac{\ellrad_{n} - \ellrad_{n}^{-1}}{2}\sin{\phi},
  \,\, \phi \in [0, 2\pi) \bigg\},
\end{align*}
and recall the following lemma (see \cite[Lemma 2]{nobile.eal:optimal-sparse-grids} for a proof).

\begin{lemma}\label{lemma:inc_cheb_conv}
  Let $f: [-1,1]^N \rightarrow \rset$, and
  assume that there exist $\ellrad_{1},\ellrad_{2},\ldots,\ellrad_{N}>1$
  such that $f$ admits a complex continuation $f^*:\mathbb{C}^N\rightarrow \rset$
  holomorphic in the Bernstein polyellipse $\mcE_{\ellrad_1,\ldots,\ellrad_N}$
  with $\sup_{\zz \in \mcE_{\ellrad_1,\ldots,\ellrad_N}} |f^*(\zz)| \leq B$ and
  $B = B(\ellrad_{1},\ellrad_{2},\ldots, \ellrad_{n}) < \infty$.
  Then $f$ admits a Chebyshev expansion %
  that converges in $C^0([-1,1]^N)$, and whose coefficients $f_\qq$ are such that
  \begin{equation}\label{eq:polyell_cheb_conv}
    | f_\qq | \leq C_\textnormal{Cheb}(\qq) \prod_{n=1}^N e^{-g^*_n q_n},  \quad g^*_n = \log \ellrad_{n}
  \end{equation}
  with $C_\textnormal{Cheb}(\qq) = 2^{\|\qq \|_0} B$, where $\|\qq \|_0$ denotes the
  number of non-zero elements of $\qq$.
\end{lemma}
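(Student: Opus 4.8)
The natural approach is to prove the univariate case ($N=1$) by a complex-analytic contour-integral argument and then obtain the $N$-variate estimate by applying the one-dimensional bound coordinate by coordinate.

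\emph{Step 1 (integral representation in one dimension).} Take $N=1$ and write $\ellrad=\ellrad_1$, $g^{*}=\log\ellrad$. Starting from the definition of $f_q$ and using the substitution $y=\cos\theta$ together with the identity $\ortp_q(\cos\theta)=\cos(q\theta)$, one rewrites the coefficient (up to the normalisation of the Chebyshev basis) as $f_q=\tfrac{1}{\pi}\int_{-\pi}^{\pi}f(\cos\theta)\,e^{iq\theta}\,d\theta$, and, substituting $w=e^{i\theta}$, as the contour integral
\begin{equation*}
  f_q=\frac{1}{\pi i}\oint_{|w|=1} g(w)\,w^{q-1}\,dw,\qquad g(w):=f\!\left(\frac{w+w^{-1}}{2}\right).
\end{equation*}

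\emph{Step 2 (contour deformation onto the Bernstein ellipse).} The Joukowski map $w\mapsto\tfrac12(w+w^{-1})$ sends the annulus $\{\ellrad^{-1}<|w|<\ellrad\}$ onto the open region bounded by the ellipse $\mcE_{1,\ellrad}$ and is invariant under $w\mapsto w^{-1}$; hence the holomorphic continuation $f^{*}$ of $f$ induces a single-valued holomorphic extension of $g$ to that annulus, with $|g(w)|\le B$ there. For every $q\ge 1$ the integrand $g(w)w^{q-1}$ is holomorphic in the annulus, so Cauchy's theorem lets us deform the contour $|w|=1$ inward to $|w|=\rho$ with $\rho\in(\ellrad^{-1},1]$; the length-times-supremum estimate then gives $|f_q|\le 2B\rho^{q}$, and letting $\rho\downarrow\ellrad^{-1}$ yields $|f_q|\le 2B\,e^{-g^{*}q}$. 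For $q=0$ one bounds $|f_0|\le B$ directly (no factor $2$), consistently with $C_\textnormal{Cheb}(0)=2^{0}B=B$.

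\emph{Step 3 (tensorisation).} For general $N$, freeze $y_2,\dots,y_N\in[-1,1]$ and regard $f$ as a function of $y_1$ only: it extends holomorphically to $\mcE_{1,\ellrad_1}$ with the same bound $B$, so Step 2 produces the partial coefficient $f_{q_1}(y_2,\dots,y_N)$ with $|f_{q_1}(y_2,\dots,y_N)|\le b_1\,e^{-g_1^{*}q_1}$, where $b_1=2B$ if $q_1\neq 0$ and $b_1=B$ if $q_1=0$. The contour representation of Step 1 also shows, after differentiating under the integral sign (or by Morera's theorem and Fubini), that $(y_2,\dots,y_N)\mapsto f_{q_1}(y_2,\dots,y_N)$ is holomorphic in the reduced polyellipse $\mcE_{2,\ellrad_2}\times\cdots\times\mcE_{N,\ellrad_N}$, with $b_1 e^{-g_1^{*}q_1}$ as the new uniform bound. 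Iterating over the remaining coordinates and multiplying the estimates---each nonzero component of $\qq$ contributing exactly one factor $2$---gives $|f_\qq|\le 2^{\|\qq\|_0}B\prod_{n=1}^{N}e^{-g_n^{*}q_n}$, which is \eqref{eq:polyell_cheb_conv}. Finally, since $\sum_{\qq\in\nset^N}2^{\|\qq\|_0}\prod_{n}e^{-g_n^{*}q_n}<\infty$ and $\norm{\oortp_\qq}{L^\infty([-1,1]^N)}=1$, the Chebyshev series converges absolutely and uniformly, i.e.\ in $C^0([-1,1]^N)$.

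\emph{Main obstacle.} The delicate points are structural rather than computational: one must check that the Joukowski image of the full annulus $\{\ellrad^{-1}<|w|<\ellrad\}$ is exactly the interior of $\mcE_{1,\ellrad}$, so that the holomorphy and the uniform bound of $f^{*}$ transfer to $g$ on an entire annulus (not merely on a one-sided neighbourhood of $[-1,1]$), and one must verify that the partial Chebyshev coefficients inherit joint holomorphy in the remaining variables so that the univariate argument applies recursively. Once these are secured, the estimate is a routine length-times-supremum bound, and the constant $2^{\|\qq\|_0}$ emerges merely by not collecting the factor $2$ when $q_n=0$.
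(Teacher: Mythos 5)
Your argument is correct. The paper does not prove this lemma itself but defers to \cite[Lemma~2]{nobile.eal:optimal-sparse-grids}, and the proof there is essentially the classical one you give: the univariate contour-integral estimate obtained by pulling the Chebyshev coefficient back through the Joukowski map to an annulus and shrinking the contour to $|w|=\ellrad^{-1}$, followed by coordinate-wise tensorization, with the factor $2^{\|\qq\|_0}$ arising exactly as you describe from not collecting the factor $2$ when $q_n=0$.
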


The following lemma then shows that the region of analyticity of $\qoi(\yy)$ indeed contains
a Bernstein ellipse, so that a decay of exponential type can be expected for its
Chebyshev coefficients.

\begin{lemma}\label{lemma:qoi_polyellipse}
The quantity of interest $\qoi(\yy)=\Theta[u(\cdot,\yy)]$ is analytic in a Bernstein polyellipse
with parameters $\ellrad_n = \tau_n + \sqrt{\tau_n^2+1}$, for any $\tau_n < \frac{\pi}{2 N \lambda_n}$.
\end{lemma}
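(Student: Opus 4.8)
The plan is to follow the standard complex‑continuation argument for (log‑)affine diffusion coefficients, see e.g.\ \cite{babuska.nobile.eal:stochastic2,cohen.devore.schwab:nterm2,back.nobile.eal:optimal}; the only quantitative ingredient is the precise geometry of the Bernstein ellipse. First I would record that, since $\ellrad_n = \tau_n + \sqrt{\tau_n^2+1}$ gives $\ellrad_n^{-1} = \sqrt{\tau_n^2+1}-\tau_n$, the ellipse $\mcE_{n,\ellrad_n}$ has semi‑axes $\tfrac{\ellrad_n+\ellrad_n^{-1}}{2} = \sqrt{\tau_n^2+1}$ in the real direction and $\tfrac{\ellrad_n-\ellrad_n^{-1}}{2} = \tau_n$ in the imaginary direction; in particular every $z_n \in \mcE_{n,\ellrad_n}$ satisfies $|\real{z_n}| \le \sqrt{\tau_n^2+1}$ and $|\im{z_n}| \le \tau_n$.

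Next I would complexify the coefficient, setting $a(\xx,\zz) = \exp\!\left(\sum_{n=1}^N \lambda_n\psi_n(\xx)z_n\right)$ for $\zz \in \mathbb{C}^N$; this is entire in $\zz$ with values in $L^\infty(\mathscr{B})$ because $\|\psi_n\|_{L^\infty(\mathscr{B})}\le 1$. Writing $z_n = s_n + \mathrm{i}\,t_n$, for each fixed $\xx$ one has $\real{a(\xx,\zz)} = e^{\sum_n \lambda_n\psi_n(\xx)s_n}\cos\!\left(\sum_n \lambda_n\psi_n(\xx)t_n\right)$. On the closed polyellipse $\overline{\mcE_{\ellrad_1,\ldots,\ellrad_N}}$ the cosine argument is bounded in modulus by $\sum_n\lambda_n|t_n| \le \sum_n\lambda_n\tau_n < \sum_n \lambda_n\frac{\pi}{2N\lambda_n} = \frac{\pi}{2}$, so $\cos(\cdot)$ stays bounded below by $\cos\!\left(\sum_n\lambda_n\tau_n\right) > 0$, while the exponential prefactor lies between $e^{-\sum_n\lambda_n\sqrt{\tau_n^2+1}}$ and $e^{\sum_n\lambda_n\sqrt{\tau_n^2+1}}$. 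Hence there are constants $0 < a^\star_{\min}\le a^\star_{\max} < \infty$, depending only on $\{\lambda_n,\tau_n\}_{n=1}^N$, with $\real{a(\xx,\zz)}\ge a^\star_{\min}$ and $|a(\xx,\zz)|\le a^\star_{\max}$ uniformly for $\xx\in\mathscr{B}$ and $\zz\in\overline{\mcE_{\ellrad_1,\ldots,\ellrad_N}}$.

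With this uniform ellipticity I would apply the Lax--Milgram lemma in the complexified space $V_{\mathbb{C}} = H^1_0(\mathscr{B};\mathbb{C})$ to the sesquilinear form $b_\zz(u,v) = \int_{\mathscr{B}} a(\xx,\zz)\nabla u\cdot\overline{\nabla v}\di\xx$: continuity is immediate from $|a|\le a^\star_{\max}$, and coercivity follows from $\real{b_\zz(u,u)} = \int_{\mathscr{B}}\real{a(\xx,\zz)}|\nabla u|^2 \ge a^\star_{\min}\|\nabla u\|_{L^2(\mathscr{B})}^2 \ge c_P\,a^\star_{\min}\|u\|_{V}^2$ (Poincar\'e), both uniformly in $\zz$ over the polyellipse. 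This yields, for every such $\zz$, a unique solution $u(\cdot,\zz)\in V_{\mathbb{C}}$ of the complexified problem, with the uniform a priori bound $\|u(\cdot,\zz)\|_{V}\le \|\forcing\|_{V'}/(c_P a^\star_{\min})$. Holomorphy of $\zz\mapsto u(\cdot,\zz)$ then follows in the usual way: $\zz\mapsto b_\zz$ is holomorphic as a map into $\mathcal{L}(V_{\mathbb{C}},V_{\mathbb{C}}')$ (it is multiplication by the entire $L^\infty(\mathscr{B})$‑valued function $a(\cdot,\zz)$ composed with fixed bounded operations), and the inverse of a uniformly invertible holomorphic operator family is holomorphic; equivalently one checks complex differentiability directly, $\partial_{z_n}u$ solving $b_\zz(\partial_{z_n}u,v) = -\int_{\mathscr{B}}\lambda_n\psi_n(\xx)a(\xx,\zz)\nabla u\cdot\overline{\nabla v}\di\xx$. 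Finally, since $\Theta[v] = \int_{\mathscr{B}} v\,Q\di\xx$ is a bounded linear functional on $V$, its complex‑linear extension composed with the holomorphic $V_{\mathbb{C}}$‑valued map $\zz\mapsto u(\cdot,\zz)$ shows that $\qoi^\star(\zz) := \Theta[u(\cdot,\zz)]$ is holomorphic on the open polyellipse $\mcE_{\ellrad_1,\ldots,\ellrad_N}$ and bounded there by $\|Q\|_{L^2(\mathscr{B})}\,\|\forcing\|_{V'}/(c_P a^\star_{\min}) =: B < \infty$, which is precisely the claimed analyticity (and supplies the constant $B$ needed to invoke Lemma~\ref{lemma:inc_cheb_conv}).

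The only delicate point, and the main obstacle, is the coercivity estimate: one has to use simultaneously that the Bernstein ellipse $\mcE_{n,\ellrad_n}$ confines $\im{z_n}$ to $[-\tau_n,\tau_n]$ and that the hypothesis $\tau_n < \frac{\pi}{2N\lambda_n}$ forces $\sum_n\lambda_n\tau_n<\frac{\pi}{2}$, so that $\real{a(\xx,\zz)}$ cannot vanish; the exponential growth of $|a|$ along the real directions is harmless since it is controlled on the bounded ellipse. Everything else (well‑posedness, a priori bounds, transfer of holomorphy through the bounded functional $\Theta$) is routine.
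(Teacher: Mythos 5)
Your proof is correct and follows essentially the same route as the paper: both arguments hinge on the fact that the complexified problem is well posed wherever $\real{a(\xx,\zz)}>0$, and on the geometric observation that the Bernstein ellipse with $\frac{\ellrad_n-\ellrad_n^{-1}}{2}=\tau_n$ confines $\im{z_n}$ to $[-\tau_n,\tau_n]$, so that $\sum_n\lambda_n|\im{z_n}|<\pi/2$ keeps the cosine factor positive. The only difference is one of detail: the paper delegates the analyticity-on-$\{\real{a}>0\}$ step to a citation of \cite{babuska.nobile.eal:stochastic2}, whereas you re-derive it via Lax--Milgram and holomorphic operator families, and in doing so you also supply the uniform bound $B$ on the polyellipse that the subsequent lemma actually needs.
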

\begin{proof}
Equation \eqref{eq:ell_PDE_strong_yy_example} can be extended in the complex domain
by replacing $\yy$ with $\zz \in \mathbb{C}^N$, and is analytic in the set
$\Sigma = \left\{ \zz \in \mathbb{C}^N : \Re e{[a(\xx,\zz)]} >0 \right\}$,
see, e.g., \cite{babuska.nobile.eal:stochastic2}. By writing $z_n = b_n + i c_n$, we have
\begin{align*}
a(\xx,\zz)
& = \exp \left( \sum_n z_n \lambda_n \psi_n(\xx) \right)
  = \exp \left( \sum_n b_n \lambda_n \psi_n(\xx) \right) \exp \left( \sum_n i c_n \lambda_n \psi_n(\xx) \right) \\
& = \exp \left( \sum_n b_n \lambda_n \psi_n(\xx) \right)
	\left[ \cos \left(\sum_n c_n \lambda_n \psi_n(\xx) \right) + i \sin \left( \sum_n c_n \lambda_n \psi_n(\xx) \right) \right]
\end{align*}
so that the region $\Sigma$ can be rewritten as
\[
\Sigma =
\left\{ \zz = \bb + i \cc \in \mathbb{C}^N : \cos \left( \sum_n c_n \lambda_n \psi_n(\xx) \right) >0 , \forall \xx \in \mathscr{B} \right\}.
\]
Such a region includes the smaller region
\[\Sigma_2 = \left\{ \zz = \bb + i \cc \in \mathbb{C}^N :
\left\| \sum_n c_n \lambda_n \psi_n \right\|_{L^\infty(\mathscr{B})} < \frac{\pi}{2} \right\},\]
which in turn includes
\[\Sigma_3
= \left\{ \zz = \bb + i \cc \in \mathbb{C}^N :
\sum_n  \lambda_n |c_n|  < \frac{\pi}{2} \right\},\]
where the last equality is due to the fact that, by construction,
$\left\|\psi_n\right\|_{L^\infty(\mathscr{B})}=1$, cf. equation \eqref{eq:log-uniform-a}.
Next we let $\tau_n = \frac{\pi}{2 N \lambda_n}$
and define the following subregion of $\Sigma_3$:
\[\Sigma_4 = \bigg\{ \zz = \bb + i \cc \in \mathbb{C}^N : |c_n|  < \tau_n \bigg\} \subset \Sigma_3.
\]
$\Sigma_4$ is actually a polystrip in the complex plain that it in turn contains
the Bernstein ellipse with parameters $\ellrad_n$ such that
\[\frac{\ellrad_n - \ellrad_n^{-1}}{2} = \tau_n
\Rightarrow \ellrad_n^2 - 1 - 2\tau_n\ellrad_n = 0
\Rightarrow \ellrad_n = \tau_n + \sqrt{\tau_n^2+1}
\]
in which $u(\xx,\yy)$ is analytic.
Finally, the quantity of interest, $\qoi=\Theta[u]$, is also analytic in the same
Bernstein polyellipse due to the linearity of the operator $\Theta$.
\end{proof}
\begin{remark}
  Incidentally, we remark that the choice of $\tau_n$ considered in Lemma \ref{lemma:qoi_polyellipse}
  degenerates for $N \to \infty$. In this case, if we know that
  $\sum_{n=0}^\infty (\lambda_n \|\psi_n\|_{L^\infty(\mathscr{B})})^p  < \infty$
  for some $p<1$, then we could set
  $\tau_n = \frac{\pi}{2} (\lambda_n \|\psi_n\|_{L^\infty(\mathscr{B})})^{p-1}$,
  which does not depend on $N$.
\end{remark}

\begin{lemma}
  \begin{equation}\label{eq:deltaE_beta_gen}
    \Delta E_\vbeta^\textnormal{stoc} \leq C_E e^{-\sum_{n=1}^N g^*_n m(\beta_n-1)} \mathbb{M}^{m(\vbeta)}
  \end{equation}
  holds, where $C_E= 4^N B \prod_{n=1}^N \frac{1}{1-e^{-g^*_n}}$, $B$ as in Lemma \ref{lemma:inc_cheb_conv},
  $g^*_n = \log \ellrad_n $ with $\ellrad_n$ as in Lemma \ref{lemma:qoi_polyellipse},
  and $\mathbb{M}^{m(\vbeta)}$ has been defined in equation \eqref{eq:leb_def}.
\end{lemma}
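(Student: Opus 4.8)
The plan is to reduce the tensorised stochastic detail operator to a product of univariate details, expand the integrand in the Chebyshev basis of Lemma~\ref{lemma:inc_cheb_conv}, and exploit that a univariate detail operator annihilates the low‑degree Chebyshev polynomials (because both interpolants it subtracts reproduce them exactly). Concretely, since $\mathscr{Q}^{m(\vbeta)}[f] = \E{\mathscr{U}^{m(\vbeta)}[f]}$ and $\mathscr{U}^{m(\vbeta)} = \bigotimes_{n=1}^N \mathscr{U}_n^{m(\beta_n)}$, linearity of the expectation gives
\[
  \vec\Delta^\textnormal{stoc}\!\left[\mathscr{Q}^{m(\vbeta)}[f]\right]
  = \E{\Big( \bigotimes_{n=1}^N \big(\mathscr{U}_n^{m(\beta_n)} - \mathscr{U}_n^{m(\beta_n-1)}\big)\Big)[f]},
\]
with the convention $\mathscr{U}_n^{m(0)} = \mathscr{U}_n^0 = 0$. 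I would carry out the estimate for an arbitrary $f$ holomorphic in the polyellipse of Lemma~\ref{lemma:qoi_polyellipse} with $\sup_{\mcE}|f^*| \le B$; applied to $f=\qoi$ this is exactly \eqref{eq:deltaE_beta_gen}, and applied to the deterministic detail $\vec\Delta^\textnormal{det}[\qoi^\valpha]$ (again of this type, by linearity) it yields the factored bound of Assumption~\ref{assump:dW_dE_factor}, the $\valpha$‑dependence being absorbed into $\Delta E_\valpha^\textnormal{det}$.

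By Lemmas~\ref{lemma:inc_cheb_conv}--\ref{lemma:qoi_polyellipse}, $f$ has a Chebyshev expansion $f = \sum_{\qq\in\nset^N} f_\qq\, \oortp_\qq$ converging in $\cset^0(\Gamma)$, with $|f_\qq| \le 2^{\|\qq\|_0} B \prod_{n=1}^N e^{-g^*_n q_n}$; since the operators $\mathscr{U}_n^{m(\cdot)}$ and $\E{\cdot}$ are bounded on $\cset^0$, I may act termwise. The key cancellation is that $\mathscr{U}_n^{m(\beta_n)}$ reproduces $\pset^{m(\beta_n)-1}(\Gamma_n)$ exactly, so $\big(\mathscr{U}_n^{m(\beta_n)} - \mathscr{U}_n^{m(\beta_n-1)}\big)[\ortp_q] = 0$ for all $q \le m(\beta_n-1)-1$; hence in the tensor product only multi‑indices $\qq$ with $q_n \ge m(\beta_n-1)$ for every $n$ survive. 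For those indices, using $|\E{h}| \le \norm{h}{L^2_\rho}$, Fubini on the product measure, $\norm{\ortp_q}{L^\infty(\Gamma_n)}=1$, the definition \eqref{eq:leb_def}, and monotonicity of the Clenshaw--Curtis Lebesgue‑type constants in the level (so $\mathbb{M}_n^{m(\beta_n-1)} \le \mathbb{M}_n^{m(\beta_n)}$, with $\mathbb{M}_n^{m(0)}=0$),
\[
  \norm{\Big(\bigotimes_{n=1}^N \big(\mathscr{U}_n^{m(\beta_n)} - \mathscr{U}_n^{m(\beta_n-1)}\big)\Big)[\oortp_\qq]}{L^2_\rho}
  \le \prod_{n=1}^N \big(\mathbb{M}_n^{m(\beta_n)} + \mathbb{M}_n^{m(\beta_n-1)}\big) \le 2^N \mathbb{M}^{m(\vbeta)}.
\]
Combining with the coefficient bound ($2^{\|\qq\|_0}\le 2^N$) and summing the geometric series $\sum_{q\ge m(\beta_n-1)} e^{-g^*_n q} = e^{-g^*_n m(\beta_n-1)}/(1-e^{-g^*_n})$ — finite because $g^*_n = \log\ellrad_n>0$ — I obtain
\[
  \Big|\vec\Delta^\textnormal{stoc}\!\left[\mathscr{Q}^{m(\vbeta)}[f]\right]\Big|
  \le 2^N B \cdot 2^N \mathbb{M}^{m(\vbeta)} \prod_{n=1}^N \frac{e^{-g^*_n m(\beta_n-1)}}{1-e^{-g^*_n}}
  = C_E\, e^{-\sum_{n=1}^N g^*_n m(\beta_n-1)}\, \mathbb{M}^{m(\vbeta)},
\]
with $C_E = 4^N B \prod_{n=1}^N \frac{1}{1-e^{-g^*_n}}$, which is the claim.

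The part I expect to require the most care is matching the exponent $m(\beta_n-1)$ exactly: one must pair the reproduction degree of the \emph{coarser} interpolant ($\pset^{m(\beta_n-1)-1}$) with the first non‑reproduced Chebyshev mode ($q_n = m(\beta_n-1)$), and separately check the degenerate level $\beta_n=1$, where $\mathscr{U}_n^{m(0)}=0$, no mode is annihilated, and the formula correctly returns the factor $1/(1-e^{-g^*_n})$ (consistent with $m(0)=0$). The remaining ingredients — the $\cset^0$‑convergence legitimising the termwise action of the operators, the monotonicity of the Lebesgue constants, and absorbing the two factors $2^N$ (from the operator norms and from $C_\textnormal{Cheb}(\qq)\le 2^N B$ in Lemma~\ref{lemma:inc_cheb_conv}) into $4^N$ — are routine.
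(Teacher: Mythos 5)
Your proof is correct and follows essentially the same route as the paper, which simply combines Lemmas \ref{lemma:inc_cheb_conv} and \ref{lemma:qoi_polyellipse} and defers the remaining argument to the cited Lemma 5 of \cite{nobile.eal:optimal-sparse-grids}: expand in the Chebyshev basis, use polynomial exactness of the univariate interpolants to annihilate the modes with $q_n \leq m(\beta_n-1)-1$, control the surviving terms via the Lebesgue-type constants $\mathbb{M}^{m(\vbeta)}$, and sum the geometric tails. You have merely written out in full the details the paper outsources, and your constant $C_E$ matches exactly.
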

\begin{proof}
  Combining Lemmas \ref{lemma:inc_cheb_conv} and \ref{lemma:qoi_polyellipse},
  we obtain that the Chebyshev coefficients of $\qoi$ can be bounded as
  \[| \qoi_\qq | \leq C_\textnormal{Cheb}(\qq) \prod_{n=1}^N e^{-g^*_n q_n},\]
  with $g^*_n = \log \ellrad_n = \log (\tau_n + \sqrt{\tau_n^2+1})$ and $\tau_n$
  as in Lemma \ref{lemma:qoi_polyellipse}. Then, the result can
  be obtained following the same argument of
  \cite[Lemma 5]{nobile.eal:optimal-sparse-grids}.
\end{proof}
To conclude, we first observe that $\mathbb{M}^{m(\vbeta)}$
grows logarithmically with respect to $m(\vbeta)$, see eq. \eqref{eq:CC_leb_est},
so it is asymptotically negligible in the estimate above, i.e. we can write
  \[\Delta E_\vbeta^\textnormal{stoc} \leq C_{E_2}(\epsilon) \prod_{n=1}^N e^{-g^*_n (1-\epsilon_E) m(\beta_n-1)}\]
for an arbitrary $\epsilon_E > 0$ and with $C_{E_2}(\epsilon_E)>C_E$, and furthermore that
the definition of $m(i)$ in \eqref{eq:m_CC} implies that $m(i-1) \geq \frac{m(i)-1}{2}$.
We can finally write
\begin{align*}
  \Delta E_\vbeta^\textnormal{stoc}
  & \leq C_{E_2}(\epsilon) \prod_{n=1}^N e^{-g^*_n (1-\epsilon_E) \frac{m(\beta_n)-1}{2}} %
   = C_{\mathrm{error}}^{\textnormal{stoc}} \prod_{n=1}^N e^{-\widetilde{g}_n m(\beta_n)}, %
\end{align*}
with $C_{\mathrm{error}}^{\textnormal{stoc}} = C(\epsilon) \prod_{n=1}^N e^{\frac{g^*_n}{2} (1-\epsilon)}$ and $\widetilde{g}_n = \frac{g^*_n}{2} (1-\epsilon_E)$.
The latter bound actually shows that
bound \eqref{eq:deltaE_beta_est} in Assumption \ref{assump:dW_dE_model} is valid for the test we are considering.
Finally, we point out that in practice we work with the expression \eqref{eq:deltaE_ass},
whose rates $g_n$ are actually better estimated numerically,
using the same procedure used to obtain the deterministic rates
$\tilde {\dRate{}}_j=2$:
we choose a sufficiently fine spatial resolution level $\valpha$, consider a variable $\vbeta = j\bar{\vbeta} + \oone$
and fit the (simplified) model $\Delta E_\vbeta^\textnormal{stoc} \leq C \prod_{n=1}^N e^{-g_n 2^{\beta_n}}$.
The values obtained are reported in Table \ref{tab:g_values}, and they are found
to be equal for the case $d=1$ and $d=3$
(see also \cite{back.nobile.eal:comparison,back.nobile.eal:optimal,nobile.tempone.eal:aniso}).
To make sure that the estimated value of $g_n$ does not depend on the spatial discretization,
one could repeat the procedure for a few different values of $\valpha$ and verify that the estimate
is robust with respect to the spatial discretization: we note, however, that a rough estimate of $g_n$
will also be sufficient, since the convergence of the method is in practice dictated by the deterministic
solver, as we have already discussed in Remark \ref{rem:only-space-matters}.
Figure \ref{fig:check_rates_stoc} then shows the validity of the bound
$\Delta E_\vbeta^\textnormal{stoc} \leq C \prod_{n=1}^N e^{-g_n 2^{\beta_n}}$
comparing for fixed $\valpha=\oone$ and $\vbeta = j\bar{\vbeta} + \oone$
the value of $\Delta E_{\valpha}^\textnormal{det} \Delta E_\vbeta^\textnormal{stoc}$ and the corresponding estimate.

\begin{table}[tbp]
  \centering
  \begin{tabular}{ccccccccccc}
    \hline
	& $g_1$	& $g_2$	& $g_3$	& $g_4$	& $g_5$	& $g_6$	& $g_7$	& $g_8$	& $g_9$	& $g_{10}$	\\
    \hline
	& 2.4855& 2.8174& 4.5044&4.1938 &4.7459 &6.8444	& 7.1513& 7.8622& 8.6584& 9.4545	\\
    \hline
  \end{tabular}
  \caption{Values of rates $g$ for the test cases considered.}
  \label{tab:g_values}
\end{table}

\paragraph{Stochastic-deterministic product structure}
We conclude this section by verifying Assumption \ref{assump:dW_dE_factor}, i.e., the fact that the error
contribution can be factorized as $\Delta E_{\valpha, \vbeta} = \Delta E_\valpha^\textnormal{det} \Delta E_\vbeta^\textnormal{stoc}$ and
that an analogous decomposition holds for $\Delta W_{\valpha,\vbeta}$. While the latter is trivial,
to verify the former we employ the same strategy used to verify the models
for $\Delta E_\valpha^\textnormal{det}$ and  $\Delta E_\vbeta^\textnormal{stoc}$, this time letting both $\valpha$
and $\vbeta$ change for every point,
i.e., $\valpha = j \bar{\valpha} + \oone$ and  $\vbeta = j \vbeta_0 + \oone$.
Figure \ref{fig:check_mixed_rates} shows the comparison between the computed value of
$\Delta E_{\valpha,\vbeta}$ and their estimated counterpart and confirms the validity
of the product structure assumption.

\begin{figure}
  \centering
  \includegraphics[page=3,width=0.5\textwidth]{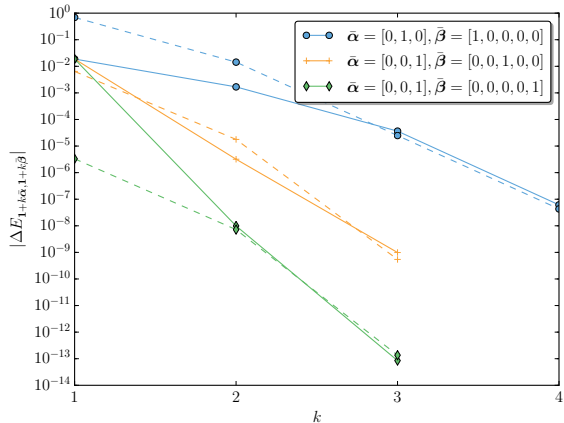}
  \caption{Comparison of $|\Delta E_{\valpha,\vbeta}|$ for
    $\vbeta=j\bar{\vbeta}+\oone$ and $\valpha = j\bar{\valpha}+\oone$
    for the test case with $D=3$ and $N=5$.
    The {\em dashed} lines are based on the model
    in \eqref{eq:deltaE_ass} with ${\tilde{\dRate{}}_i=2}$ for all
    $i=1,2,3$ and $g_j$ as in Table~\ref{tab:g_values} for
    $j=1,\ldots, 5$. The {\em solid} lines are based on computed
    values.}
  \label{fig:check_mixed_rates}
\end{figure}

\subsection{Test setup}

In our numerical tests, we compare MISC with the methods listed below. For each of them
we show (for both test cases considered)
plots of the convergence of the error in the computation of $\E{\qoi}$
with respect to the computational work, taking as a reference value the result obtained
using a well-resolved MISC solution. To avoid discrepancies in running time due to
implementation details, the computational work is estimated in terms of
the total number of degrees of freedom,
i.e., using \eqref{eq:work_decomp} and \eqref{eq:deltaW_ass}. The names used here for the methods
are also used in the legends of the figures showing the convergence plots.

\begin{description}

\item[``a-priori'' MISC] refers to the MISC method with index
  set $\mathcal I$ defined by \eqref{eq:opt_set}, where
  $\Delta W_{\valpha, \vbeta}$ and $\Delta E_{\valpha, \vbeta}$ are
  taken to equal their upper bounds in \eqref{eq:deltaW_ass} and
  \eqref{eq:deltaE_ass}, respectively. The resulting set is explicitly
  written in~\eqref{eq:opt_set_detailed_logform}.
  The convergence rate of this set is predicted by Theorem \ref{thm:misc_complexity},
  cf. Remark \ref{rem:this-is-the-set}. Note that we do not need to determine
  the value of the constants $\CWork$ and $\CError$ since they can be
  absorbed in the parameter $\epsilon$ in \eqref{eq:opt_set}.

\item[``a-posteriori'' MISC] refers to the MISC method with
  index set $\mathcal I$ defined by \eqref{eq:opt_set}, where
  $\Delta W_{\valpha, \vbeta}$ is taken to equal its upper bound in
  \eqref{eq:deltaW_ass}, and $\Delta E_{\valpha, \vbeta}$ is instead
  computed explicitly as $\left| \Delta[\descqoi{\alpha}{\beta}] \right|$.
  Notice that this method is not practical since the cost
  of constructing set $\mathcal I$
  would dominate the cost of the MISC estimator by far. However, this
  method would produce the best possible convergence and serve as a
  benchmark for both ``a-priori'' MISC and the bound \eqref{eq:deltaE_ass}.

\item[MLSC] (only in the case $d>1$) refers to the Multilevel Stochastic Collocation
  obtained by setting $\alpha_1=\ldots=\alpha_D$ (i.e. considering the mesh-size
  as the only discretization parameter), as already mentioned in Remark \ref{remark:webster_and_bieri};
  we recall this is not exactly the MLSC method that was implemented in
  \cite{teckentrup.etal:MLSC}, see again Remark \ref{remark:webster_and_bieri}.
  Just as with MISC, we consider both the ``a-priori'' and ``a-posteriori'' version of MLSC,
  where $\Delta E_{\valpha, \vbeta}$ is taken to be equal to its upper \eqref{eq:deltaE_ass}
  in the former case and assessed by direct numerical evaluation in the latter case.

\item[SCC] refers to the ``Sparse Composite Collocation method''
  in Remark \ref{remark:webster_and_bieri}, see equation \eqref{eq:iso-MISC}.

\item[MIMC] refers to the Multi-Index Monte Carlo method as detailed in
  \cite{abdullatif.etal:MultiIndexMC}, for which the complexity
  $\Order{W_{\max}^{-0.5}}$ can be estimated
   for the test case at hand and as long as $d<4$.

\item[SGSC] refers to the quasi-optimal Sparse Grids Stochastic Collocation (SGSC)
  with fixed spatial discretization as proposed in
  \cite{back.nobile.eal:optimal,nobile.eal:optimal-sparse-grids}.  To
  determine the needed spatial discretization for a given work and for a
  fair comparison against MISC, we actually compute the convergence
  curves of SGSC for all relevant levels of spatial discretizations
  and then show in the plots only the lower envelope of the
  corresponding convergence curves, ignoring the possible
  spurious reductions of error that might happen due to
  non-asymptotic, unpredictable cancellations, cf. Figure \ref{fig:envelope}.
  In this way, we ensure that the
  error shown for such ``single-level methods'' has been obtained with
  the smallest computational error possible.
  Again, this is not a computationally practical method but is taken as a reference
  for what a sparse grids Stochastic Collocation method with optimal balancing
  of the space and stochastic discretization errors could achieve.
\end{description}

\begin{figure}[htp]
  \centering
  \includegraphics[page=1,width=0.7\textwidth]{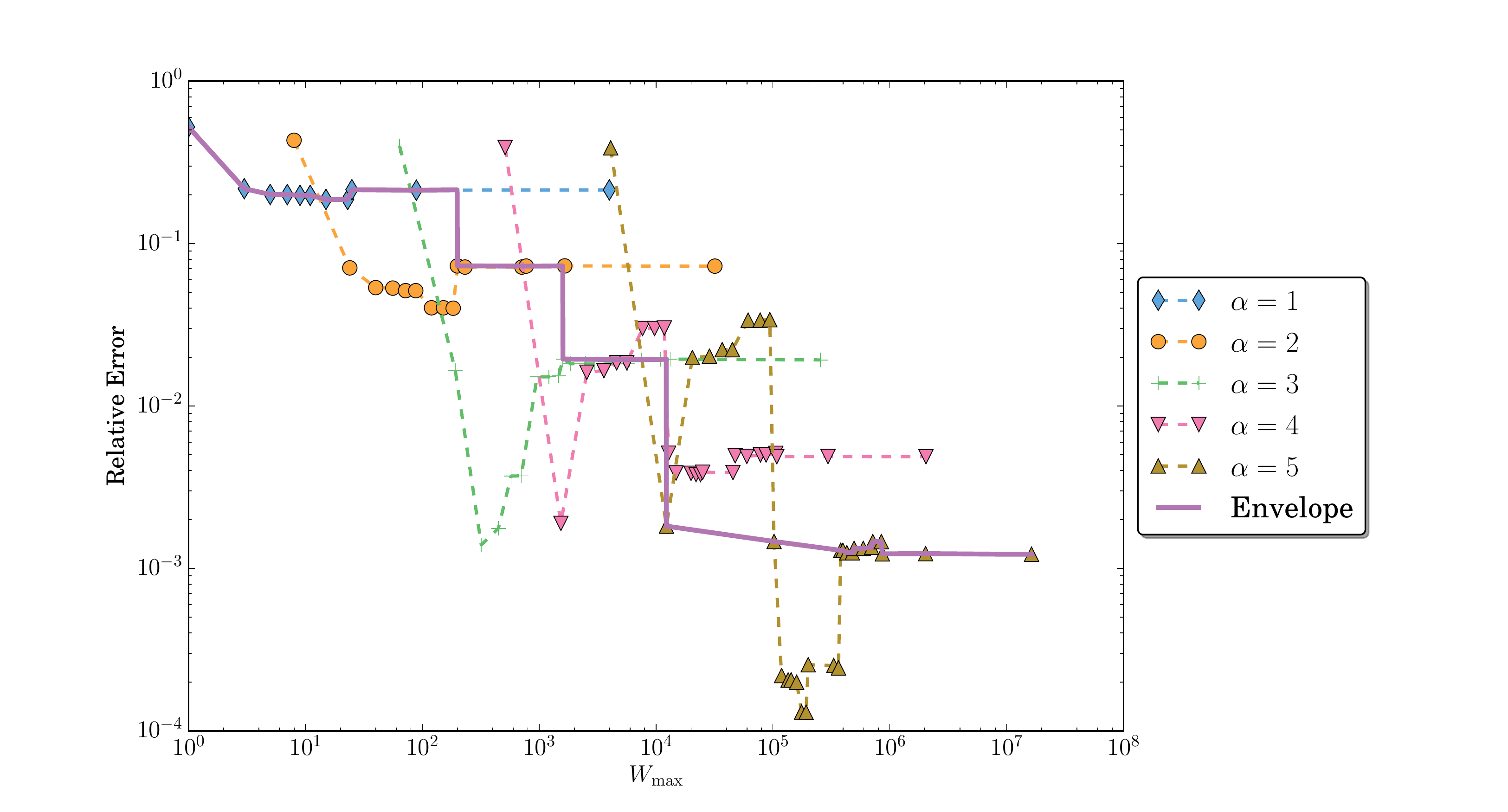}
  \caption{Envelope of SGSC convergence curves for the test case with $d=3$ and $N=10$.}
  \label{fig:envelope}
\end{figure}

\subsection{Implementation details}
\label{ss:impl}
To implement MISC, we need two components:
\begin{enumerate}
\item Given a profit level parameter, $\epsilon$, we build the quasi-optimal set
  $\mathcal I$ based on~\eqref{eq:opt_set}, \eqref{eq:deltaW_ass}
  and~\eqref{eq:deltaE_ass}. One method to achieve this is
  to exploit the fact that this set is downward closed and use the
  following recursive algorithm.
  \begin{verbatim}
FUNCTION BuildSet(epsilon, multiIndex)
    FOR i = 1 to (D+N)
        IF Profit(multiIndex + e_i) > epsilon
        THEN
            ADD multiIndex+e_i to FinalSet
            CALL BuildSet(epsilon, multiIndex+e_i)
        END IF
    END FOR
END FUNCTION
  \end{verbatim}
\item Given the set, $\mathcal I(L)$, we
  evaluate \eqref{eq:misc_estimator}. Here we have two choices:
\begin{itemize}
\item Evaluate the individual terms
  $\vec \Delta[\descqoi{\valpha}{\vbeta}]$ for every
  $\valpha,\vbeta \in \mathcal I$. To do so, we use the operator
  defined in~\eqref{eq:misc_delta_operations} along each stochastic
  and spatial direction. By storing the values of these terms,
  we can evaluate the MISC with different index sets (contained in
  $\mathcal I(L)$), which might be required to test the convergence of
  the MISC method.  Moreover, this implementation is suitable for
  adaptive methods that expand the index set based on some criteria
  and reevaluate the MISC estimator. On the other hand, this
  implementation has a computational overhead since most computed
  values of $\descqoi{\valpha}{\vbeta}$ will actually not contribute
  to the final value of the estimator. However, this computational overhead
  is only a fraction of the minimum time required to evaluate the
  estimator.
\item Use the combination form of~\eqref{eq:misc_estimator} and
  only compute the terms that have $ c_{\valpha,\vbeta} \neq 0$. This would
  remove the overhead of computing terms that make zero contribution
  to the estimator. This implementation is more efficient but less
  flexible as we cannot evaluate the estimator on sets contained
  in $\mathcal I(L)$ or build the set adaptively.
\end{itemize}
\end{enumerate}

\subsection{Test with $D=1$}

Here we consider three different numbers of stochastic variables,
namely $N=1,5,10$. Results are shown in Figure \ref{fig:res_D=1}.
As expected, a-posteriori MISC shows the best convergence, with a-priori
MISC being slightly worse and the single level methods following.
Finally, we verify the accuracy of the estimated asymptotic convergence rate provided by
Theorem \ref{thm:misc_complexity}: in this case,
$\zeta = \frac{\dRate{}_1}{\gamma_1} = \frac{\widetilde{\dRate{}}_1 \log 2}{\widetilde{\gamma}_1 \log 2} = 2$
and $\mathfrak{z}=1$ holds. Hence, the predicted convergence
rate is $W^{-\zeta}(\log W)^{(\zeta +1)(\mathfrak{z}-1)} = W^{-2}$, which appears to be in
good agreement with the experimental convergence rate.

\begin{figure}[htbp]
  \centering
  \includegraphics[page=2, width=0.8\textwidth]{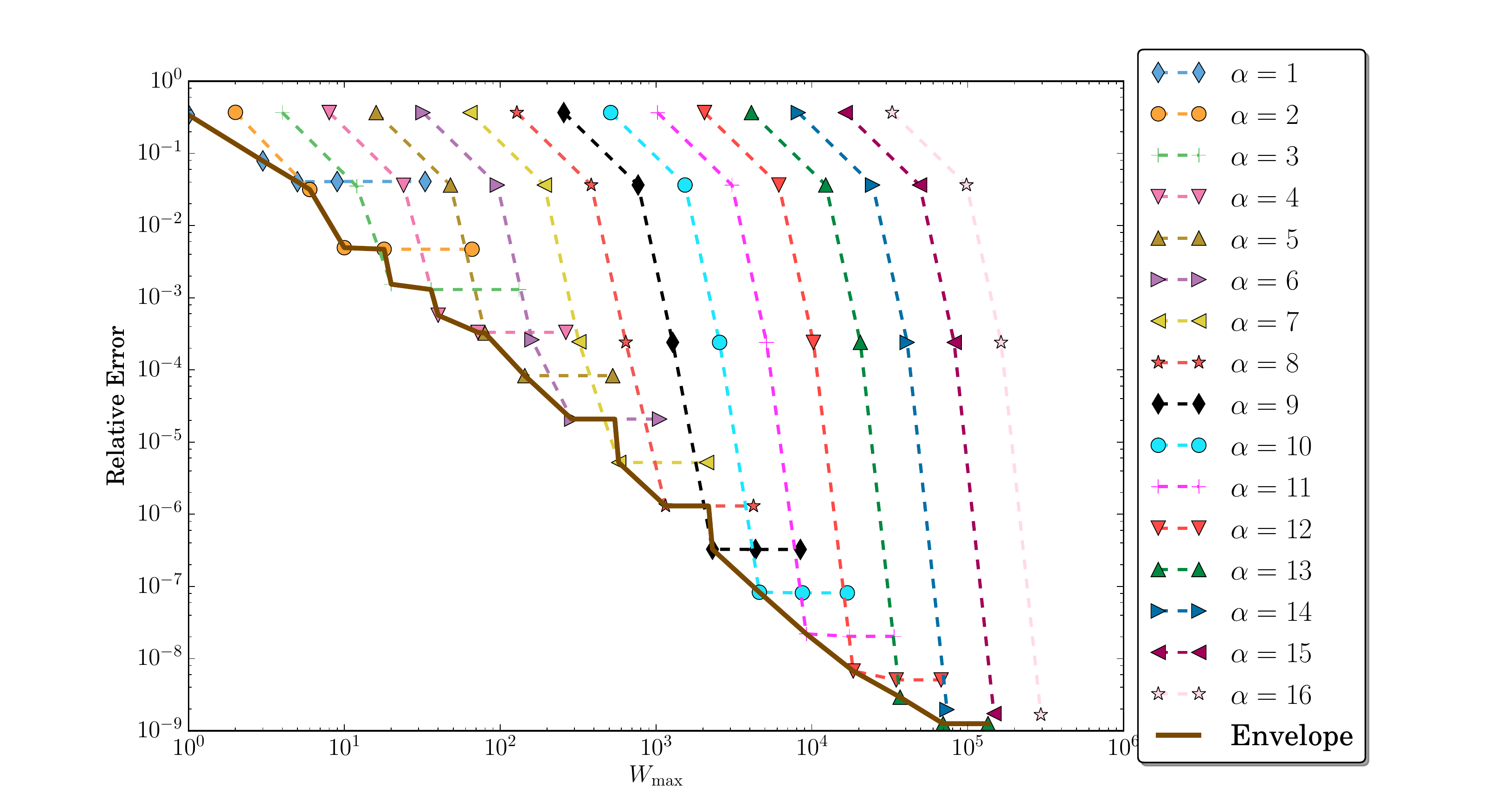}\\
  \includegraphics[page=2, width=0.8\textwidth]{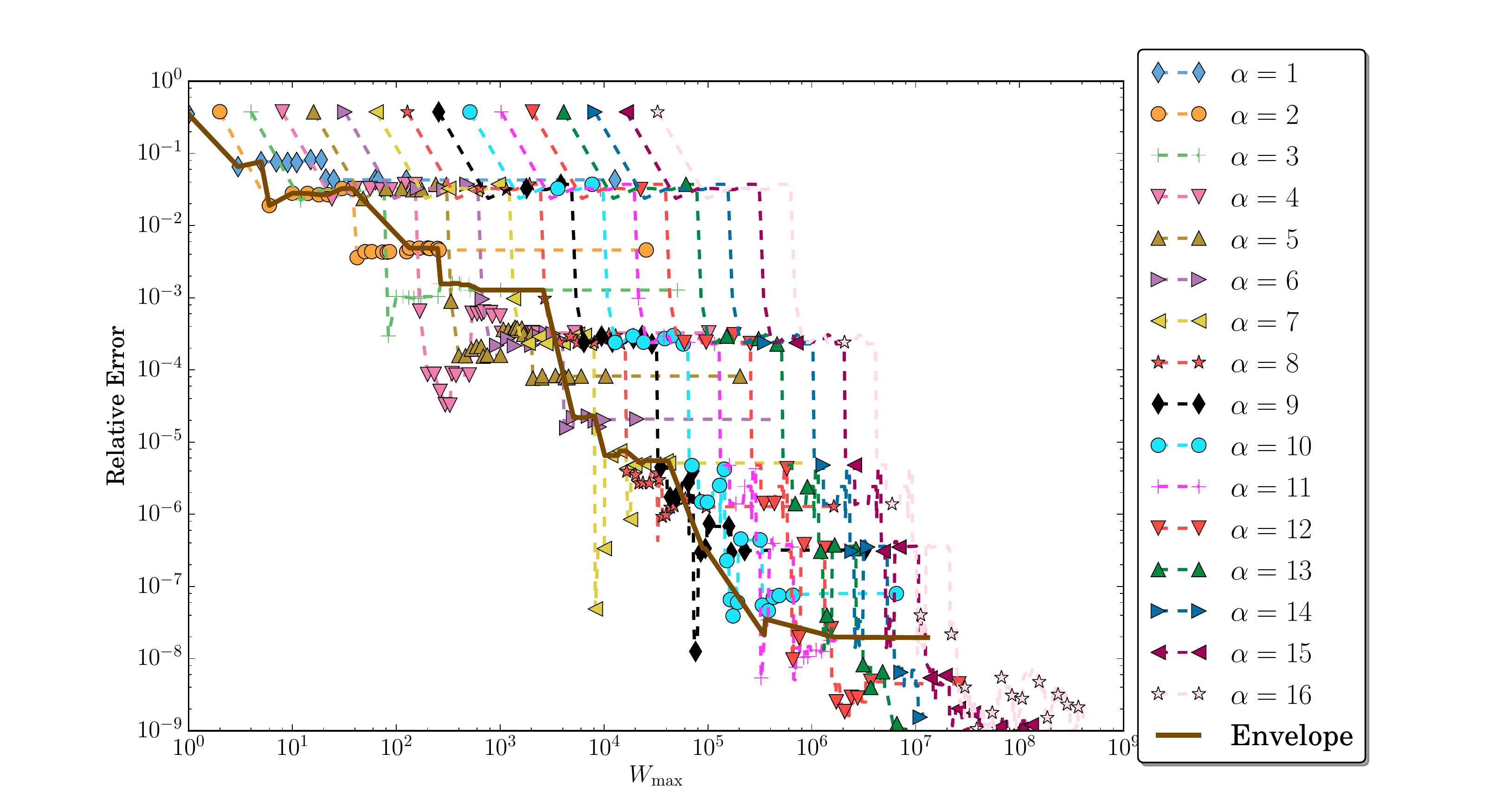}\\
  \includegraphics[page=2, width=0.8\textwidth]{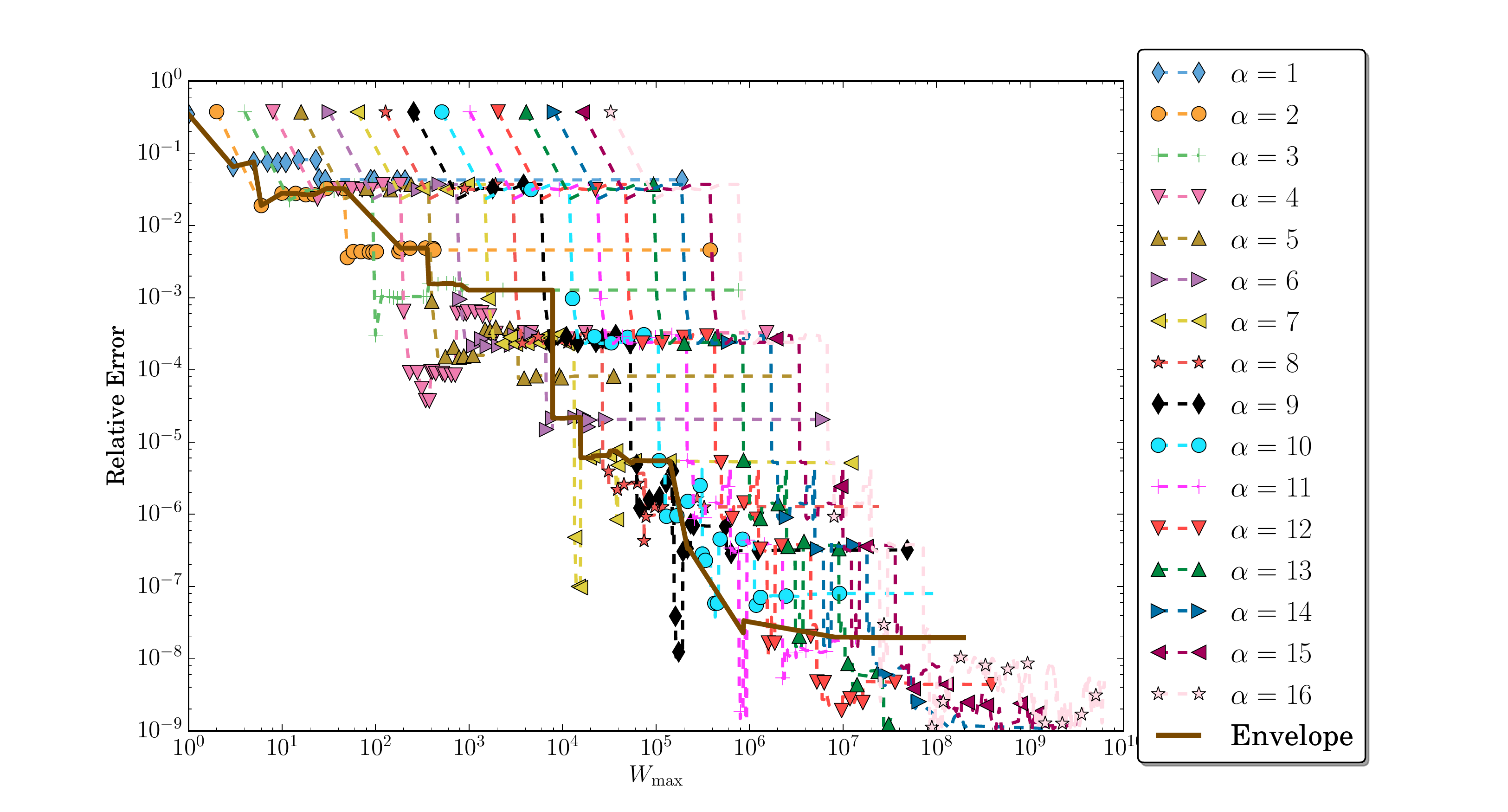}\\
  \caption{Results for test $D=1$, case $N=1$ (top), $N=5$ (center) and $N=10$ (bottom).}
  \label{fig:res_D=1}
\end{figure}

\subsection{Test with $D=3$}
In this case, we obtain the convergence curves shown in
Figure \ref{fig:res_D=3}, where the Multilevel
Stochastic Collocation method has also been included. The hierarchy between the
methods is in agreement with the case $d=1$, with the Multilevel
Stochastic Collocation being comparable or slightly better than single
level methods, but worse than the MISC approaches as expected.

Concerning the accuracy of the theoretical estimate:
since  for this test $\widetilde{\dRate{}}_1 = \widetilde{\dRate{}}_2 = \widetilde{\dRate{}}_2 = 2$
and $\widetilde{\gamma}_1 = \widetilde{\gamma}_2 = \widetilde{\gamma}_3=1$,
$\zeta = 2$ still holds, while this time $\mathfrak{z}=3$; hence, the predicted convergence
rate is $W^{-\zeta}(\log W)^{(\zeta +1)(\mathfrak{z}-1)} = W^{-2} (\log W)^6$.
The plots suggest that the theoretical estimates might
be slightly too optimistic when $N$ increases
but it is important to recall that Theorem \ref{thm:misc_complexity} gives only an asymptotic
result, and the plot could be negatively influenced by pre-asymptotic effects.
Observe also that in this case there are a few data points where
a-posteriori MISC is not better than a-priori MISC; this observation
can be ascribed to the fact that a-posteriori MISC is optimal only with
respect to the upper bound in \eqref{eq:error-decomp}.  In other
words, a-posteriori MISC selects the contributions according to the
absolute value of the contributions but then the MISC estimator is
computed by summing signed contributions.  Hence, cancellations between
contributions with similar sizes and opposite signs will occur.

Finally, we remark that, in our calculations, MLSC and SGSC were not able to
achieve very small errors, unlike MISC. This is due to a limitation in
the linear solver we are using that allows systems with only
up to $2^{17}$ degrees of freedom (around 1GB of memory) to be solved. These
``single-level'' methods hit that limit sooner than MISC since they
entail solving a very large system that comes from isotropically discretizing
all three spatial dimensions.

\begin{figure}[htbp]
  \centering
  \includegraphics[page=2,width=0.8\textwidth]{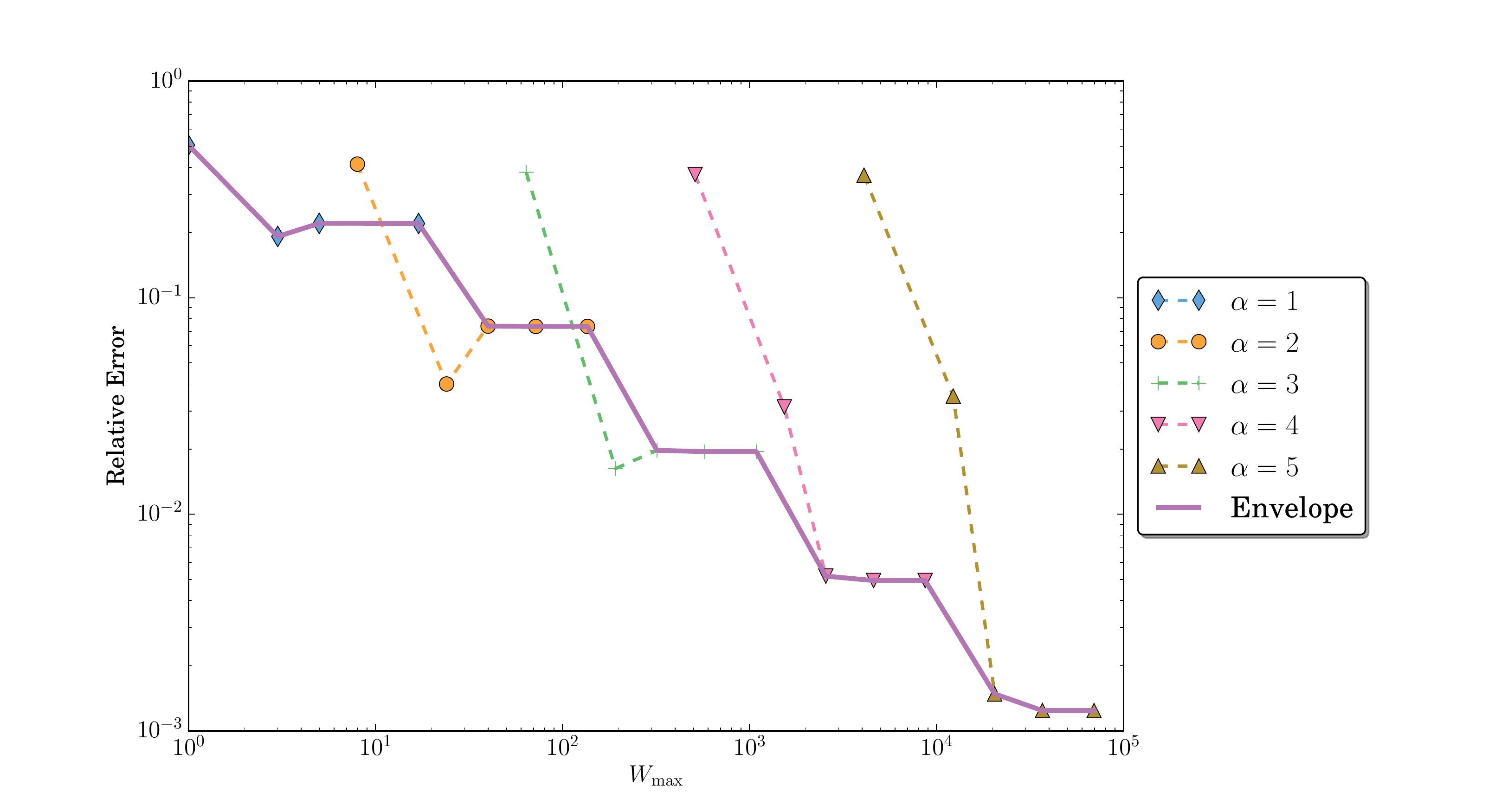}\\
  \includegraphics[page=2,width=0.8\textwidth]{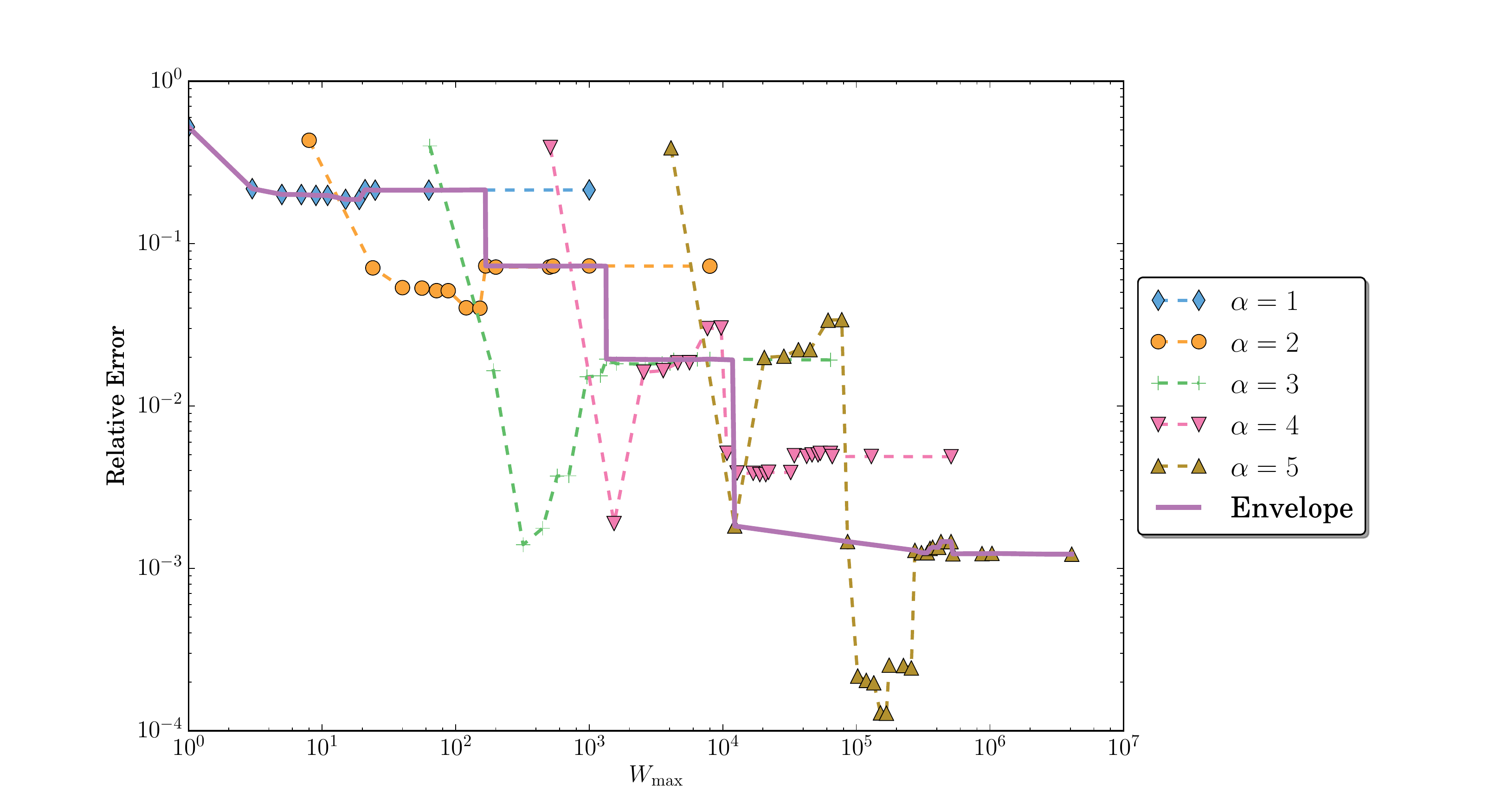}\\
  \includegraphics[page=2,width=0.8\textwidth]{COOK_d3_N10.pdf}
  \caption{Results for test $D=3$, case $N=1$ (top), $N=5$ (center)
    and $N=10$ (bottom). }
  \label{fig:res_D=3}
\end{figure}

\section{Conclusions}\label{s:conclusions}

In this work, we have proposed {MISC, a combination technique method} to solve UQ
problems, optimizing both the deterministic and stochastic resolution levels simultaneously
to minimize the computational cost.
{A distinctive feature of MISC is that its construction is based on the notion
of profit of the mixed differences composing it, rather  than balancing the total error contributions arising from
 the deterministic and stochastic components.}
We have detailed a complexity analysis and derived
a convergence theorem showing that in certain cases the convergence of the method is essentially
dictated by the convergence properties of the deterministic solver.
We have then verified the effectiveness of the method proposed on
a couple of numerical test cases, comparing
its performance with other methods available in the literature. The results obtained
are encouraging, as they suggest that the proposed methodology is more effective than
the other methods considered here. The theoretical results have been also found to be
consistent with the numerical results to a satisfactory  extent.

As a final remark, we observe that the methodology presented here is not
limited to the spatial or temporal discretization parameters of the deterministic problem,
but could also be applied to other discretization parameters, such as smoothing parameters or
artificial viscosities.

\paragraph{Acknowledgement}
  F. Nobile and L. Tamellini received support from the
  Center for ADvanced MOdeling Science (CADMOS) and partial
  support by the Swiss National Science Foundation under the
  Project No. 140574 ``Efficient numerical methods for flow and
  transport phenomena in heterogeneous random porous media''.
  R. Tempone is a member of the KAUST Strategic Research Initiative,
  Center for Uncertainty Quantification in Computational Sciences and Engineering.

\appendix
\section{Proof of Theorem \ref{thm:misc_complexity}}
The following technical lemmas are needed in the convergence proof.
\begin{lemma}\label{lemma:bound-sum-with-int}
  For $\vec x \in (1,\infty)^D$, define $\lfloor \vec x \rfloor = \left(\lfloor x_i\rfloor\right)_{i=1}^D$.
  For any $f: (1,\infty)^D \to \rset$ and $g: (1,\infty)^D \to \rset_+$,
\[
    \sum_{\{\valpha \in \nset_+^D  \::\: f(\valpha) \leq 0\}} g(\valpha) =
    \int_{\{\vec x \in (1,\infty)^D  \::\: f(\lfloor \vec x \rfloor) \leq 0\}} g(\lfloor \vec x \rfloor) \di{\vec x}
\]
  holds. Moreover, if $g$ and $f$ are increasing, then
\[
    \sum_{\{\valpha \in \nset_+^D  \::\: f(\valpha) \leq 0\}} g(\valpha)
    \leq
    \int_{\{\vec x \in (1,\infty)^D  \::\: f(\vec x -1) \leq
      0\}} g(\vec x) \di{\vec x},
\]
  and if $g$ and $f$ are decreasing, then
  \[
    \sum_{\{\valpha \in \nset_+^D  \::\: f(\valpha) \leq 0\}} g(\valpha)
    \leq
    \int_{\{\vec x \in (1,\infty)^D \::\: f(\vec x) \leq
      0\}} g(\vec x-1) \di{\vec x}.
  \]
\end{lemma}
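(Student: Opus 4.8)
The plan is to pass from the sum to the integral by tiling $[1,\infty)^D$ with the unit cubes $Q_\valpha = \prod_{i=1}^D [\alpha_i,\alpha_i+1)$, $\valpha \in \nset_+^D$, and then to use monotonicity cube by cube, comparing the values attained on $Q_\valpha$ with those at its lower corner $\valpha = \lfloor\vec x\rfloor$. The cubes are pairwise disjoint, their union is $[1,\infty)^D$, which differs from $(1,\infty)^D$ by a Lebesgue‑null set, and on each $Q_\valpha$ one has $\lfloor\vec x\rfloor \equiv \valpha$ and $|Q_\valpha| = 1$; hence any integrand of the form $h(\lfloor\vec x\rfloor)$ is a simple function with $\int_{Q_\valpha} h(\lfloor\vec x\rfloor)\di{\vec x} = h(\valpha)$, and any set $\{\vec x : f(\lfloor\vec x\rfloor)\leq 0\}$ is exactly the union of those $Q_\valpha$ with $f(\valpha)\leq 0$. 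First I would record this, which gives the opening identity for free: both sides equal $\sum_{\{\valpha \in \nset_+^D \::\: f(\valpha) \leq 0\}} g(\valpha)$. (Strictly speaking $f$ and $g$ are then evaluated slightly outside $(1,\infty)^D$ — at $\lfloor\vec x\rfloor$, and at $\vec x - \oone$ below; in every application of the lemma $f$ and $g$ are restrictions of functions defined on all of $\rset^D$, so I would read the statement with that understanding, and measurability is then a non‑issue since $f$ is continuous and $g$ continuous and positive.)

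For the increasing case I would start from the identity just proved and note that on each cube $Q_\valpha$ one has, coordinatewise, $\vec x - \oone \leq \lfloor\vec x\rfloor \leq \vec x$. Coordinatewise monotonicity of $g$ gives $g(\lfloor\vec x\rfloor)\leq g(\vec x)$ pointwise, monotonicity of $f$ gives $f(\vec x-\oone)\leq f(\lfloor\vec x\rfloor)$ and hence the inclusion $\{\vec x:f(\lfloor\vec x\rfloor)\leq 0\}\subseteq\{\vec x:f(\vec x-\oone)\leq 0\}$, and since $g\geq 0$ both replacing the integrand by the larger $g(\vec x)$ and enlarging the domain only increase the integral:
\[
  \sum_{\{\valpha \in \nset_+^D \::\: f(\valpha) \leq 0\}} g(\valpha)
  = \int_{\{f(\lfloor\vec x\rfloor)\leq 0\}} g(\lfloor\vec x\rfloor)\di{\vec x}
  \leq \int_{\{f(\lfloor\vec x\rfloor)\leq 0\}} g(\vec x)\di{\vec x}
  \leq \int_{\{f(\vec x-\oone)\leq 0\}} g(\vec x)\di{\vec x}.
\]

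The decreasing case is the mirror image: from $\vec x - \oone \leq \lfloor\vec x\rfloor$ and $g$ decreasing one gets $g(\lfloor\vec x\rfloor)\leq g(\vec x - \oone)$, while from $\lfloor\vec x\rfloor \leq \vec x$ and $f$ decreasing one gets $f(\vec x)\leq f(\lfloor\vec x\rfloor)$, so $\{\vec x : f(\lfloor\vec x\rfloor)\leq 0\}\subseteq\{\vec x : f(\vec x)\leq 0\}$; since $g(\vec x - \oone)\geq 0$, enlarging the integrand and then the domain once more gives the stated bound. I do not expect any analytic obstacle here; the only thing that genuinely needs care — and the only place I expect to have to stop and check — is the bookkeeping of directions: in each case the use of monotonicity of $f$ must be paired with the correct \emph{set} inclusion, the use of monotonicity of $g$ with the correct \emph{pointwise} inequality, and the shift by $\oone$ must sit so that both the domain of integration and the integrand move in the direction that preserves the inequality. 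Getting this matching wrong flips a bound, so it is worth writing the two chains out explicitly rather than appealing to symmetry.
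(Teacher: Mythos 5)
Your proposal is correct and follows essentially the same route as the paper: the paper also establishes the identity by tiling with unit cubes (written there as $\valpha+[0,1]^D$) and then concludes both inequalities from $\vec x-\oone\leq\lfloor\vec x\rfloor\leq\vec x$ together with monotonicity. Your version merely spells out the pointwise-bound/set-inclusion bookkeeping that the paper leaves implicit, and it is carried out correctly in both cases.
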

\begin{proof}
We have
  \begin{align*}
    \sum_{\{\valpha \in \nset_+^D  \::\: f(\valpha) \leq 0\}} g(\valpha)
   &= \sum_{\{\valpha \in \nset_+^D  \::\: f(\valpha) \leq 0\}}
    g(\valpha) \int_{\vec x \in [0,1]^{D}} \di{\vec x}\\
   &= \sum_{\{\valpha \in \nset_+^D  \::\: f(\valpha) \leq 0\}}
     \int_{\vec x \in [0,1]^{D}}
    g(\lfloor \valpha + \vec x \rfloor) \di{\vec x}\\
   &=
\int_{\{\vec x \in (1,\infty)^D  \::\: f(\lfloor \vec x \rfloor) \leq
      0\}} g(\lfloor \vec x \rfloor) \di{\vec x}.
  \end{align*}
Combining these inequalities with $x-1 \leq \lfloor x \rfloor \leq x$ finishes the proof.
\end{proof}

\begin{lemma}\label{lemma:bound-inf-sum-exp-lin}
Assume $\vec a \in \rset_+^D$, $\vec b \in \rset_+^D$ and $L>|\vec a|$. Then,
\[
\sum_{\{\vec x \in \nset_+^D \: :\: \sum_{i=1}^D a_i e^{b_i x_i} +
    b_i x_i > L \}} \exp\left(-\sum_{i=1}^D a_i
  e^{b_i x_i}\right) \leq
\left(\prod_{i=1}^D\frac{\exp(2a_i)}{a_i^2}\right)
\exp\left(-L\right) (L+1)^{2D+1}.
\]
\end{lemma}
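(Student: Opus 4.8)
The goal is to bound a multivariate sum of $\exp(-\sum_i a_i e^{b_i x_i})$ over the lattice points $\vec x \in \nset_+^D$ lying outside the region $\{\sum_i (a_i e^{b_i x_i} + b_i x_i) \le L\}$. The natural strategy is: (i) replace the sum by an integral using a monotonicity/comparison argument, (ii) change variables to linearize the exponential growth, and (iii) estimate the resulting integral, which will be essentially a tail integral of the form $\int_{\{|\vec t|\ge L\}} e^{-|\vec t|}\,d\vec t$ up to Jacobian factors, yielding the $e^{-L}(L+1)^{2D+1}$ behavior.

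First I would apply Lemma~\ref{lemma:bound-sum-with-int}. The summand $g(\vec x) = \exp(-\sum_i a_i e^{b_i x_i})$ is decreasing in each $x_i$, and the constraint function $f(\vec x) = L - \sum_i(a_i e^{b_i x_i} + b_i x_i)$ is also decreasing, so the ``decreasing'' branch of the lemma gives
\[
\sum_{\{\vec x \in \nset_+^D : f(\vec x)\le 0\}} g(\vec x)
\le \int_{\{\vec x \in (1,\infty)^D : f(\vec x)\le 0\}} g(\vec x - \oone)\,d\vec x.
\]
Then $g(\vec x - \oone) = \exp(-\sum_i a_i e^{b_i(x_i-1)}) = \exp(-\sum_i a_i e^{-b_i} e^{b_i x_i}) \le \exp(-\sum_i a_i e^{b_i x_i}) \cdot \exp(\sum_i a_i(1-e^{-b_i}))$; I would just bound the shift crudely, e.g.\ by enlarging the constant, to keep things clean — this is where a factor like $\prod_i e^{2a_i}$ starts to appear. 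One has to be a little careful that the constraint region for $\vec x \in (1,\infty)^D$ is nonempty and handled correctly, but since $L > |\vec a|$ and each term $a_i e^{b_i x_i} + b_i x_i \to \infty$, this is fine.

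Next, the key move is the substitution $t_i = a_i e^{b_i x_i}$, so that $x_i = \frac{1}{b_i}\log(t_i/a_i)$ and $dx_i = \frac{1}{b_i t_i}\,dt_i$, with $t_i$ ranging over $(a_i e^{b_i}, \infty)$. Under this change of variables the integrand $\exp(-\sum_i t_i)$ becomes clean, and the constraint $\sum_i t_i + \sum_i \log(t_i/a_i) \ge L$ is implied by the simpler constraint $\sum_i t_i \ge L$ once we drop the (possibly negative) logarithmic terms — actually I need to be careful about signs here: $\log(t_i/a_i)$ can be negative, so $\sum t_i + \sum\log(t_i/a_i) \ge L$ does \emph{not} immediately follow from $\sum t_i \ge L$. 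The correct direction is the reverse: on the region where $\sum t_i + \sum b_i x_i \ge L$, I want to enlarge the region to something easier to integrate over. Since $b_i x_i = \log(t_i/a_i) \ge b_i$ (as $t_i \ge a_i e^{b_i}$), we actually have $\sum t_i \ge L - \sum b_i x_i$, and $\sum b_i x_i$ is not bounded above, so this needs more thought — the honest approach is to split: either $\sum_i t_i \ge L/2$, or $\sum_i b_i x_i \ge L/2$; in the second case the $e^{-\sum t_i}$ with $t_i \ge a_i e^{b_i x_i}$ decays super-exponentially in the large $x_i$. The cleanest route, though, is probably to keep the logarithmic terms as a bonus: on the constraint region, $e^{-\sum t_i} = e^{-(\sum t_i + \sum\log(t_i/a_i))} \cdot \prod(t_i/a_i) \le e^{-L}\prod(t_i/a_i)$, so
\[
\int_{\{f\le 0\}} e^{-\sum t_i}\,\prod_i \frac{dt_i}{b_i t_i}
\le e^{-L}\int_{(a_i e^{b_i},\infty)^D} \prod_i \frac{dt_i}{a_i b_i}
\]
which diverges — so that over-throws away too much. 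The right balance is to use $e^{-\sum t_i} = e^{-\sum t_i/2} e^{-\sum t_i/2}$, bound the first factor on the constraint region by $e^{-L/2}\prod(t_i/a_i)^{1/2}$ using the logarithmic terms, and integrate the second factor $e^{-\sum t_i /2}$ (times the Jacobian $\prod 1/(b_i t_i)$ and the polynomial $\prod (t_i/a_i)^{1/2}$) over all of $\rsetp^D$; the $t_i^{-1/2}$ from the Jacobian-times-polynomial is integrable at $0$ and Gamma-function estimates give a finite constant. Tracking the $a_i,b_i$ dependence through the Gamma integrals, together with the shift factors from step one, should produce the claimed $\prod_i \frac{e^{2a_i}}{a_i^2}$ constant.

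\textbf{Main obstacle.} The delicate point is step (iii): cleanly extracting the factor $e^{-L}(L+1)^{2D+1}$ with the stated explicit constant, rather than just \emph{some} constant. The polynomial factor $(L+1)^{2D+1}$ rather than the $(L+1)^{D}$ one might naively guess from a $D$-dimensional simplex tail $\int_{\{|\vec t|\ge L\}} e^{-|\vec t|}d\vec t \sim e^{-L}L^{D-1}$ suggests the authors are being generous with the bookkeeping (absorbing the $\log(t_i/a_i)$ terms, the split between the $t$-region and $x$-region, and the Jacobian singularities all cost extra powers of $L$). I expect the proof handles this by a somewhat lossy but uniform estimate: bounding the constraint region by $\{\sum t_i \ge L - C\log(\cdots)\}$ or similar and then using a one-shot inequality of the form $\int_{\Omega} \le e^{-L} \cdot (\text{polynomial in } L)$ valid for all $L > |\vec a|$. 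Getting the polynomial degree and the constant exactly right — especially making sure the bound holds for all $L$ in the stated range and not just asymptotically — is the part that requires genuine care; everything else is routine calculus and monotonicity.
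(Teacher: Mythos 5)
Your overall architecture (convert the sum to an integral, then substitute $t_i = a_i e^{b_i x_i}$ to linearize the exponential) is the same as the paper's, and your diagnosis of the sign problem with the logarithmic terms is correct. But your final resolution does not prove the stated bound. The split $e^{-\sum t_i} = e^{-\sum t_i/2}\,e^{-\sum t_i/2}$, with the first factor bounded by $e^{-L/2}\prod_i (t_i/a_i)^{1/2}$ on the constraint region and the second factor integrated to a constant, yields a bound of order $e^{-L/2}$ — strictly weaker than the claimed $e^{-L}(L+1)^{2D+1}$ for large $L$. This is not a cosmetic loss: in the proof of Theorem \ref{thm:misc_complexity} this lemma is applied with $L$ replaced by $H = L - \sum_i(\dRate{i}+\gamma_i)\alpha_i$, and the factor $e^{-H}$ must recombine exactly with $e^{-\sum_i \dRate{i}\alpha_i}$ to give $e^{-L+\sum_i\gamma_i\alpha_i}$; an $e^{-H/2}$ bound would degrade the final convergence rate $\zeta$. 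Your other candidate (using the full logarithmic bonus to extract $e^{-L}$ pointwise) diverges, as you noticed, because it discards the integrand entirely and integrates $1$ over an unbounded region.

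The missing idea is to make the constraint \emph{exactly} additive by the change of variables $z_i = a_i y_i + \log(y_i+1)$ (the paper works with $y_i = e^{b_i x_i}$, so $\vec a\cdot\vec y + |\log\vec y| > L$ becomes $|\vec z| > L - |\vec a|$ after minor adjustments), writing the integrand as $e^{-|\vec z|}$ times a Jacobian factor that is only polynomially large, $\frac{dy_i}{dz_i} \leq \frac{z_i + a_i}{a_i^2}$. One then needs the tail estimate
\begin{equation*}
\int_{\{\vec z \in \rset_+^D \,:\, |\vec z| > L\}} \exp\left(-|\vec z| + |\log(\vec z)|\right)\di{\vec z} \leq \exp(-L)(L+1)^{2D-1},
\end{equation*}
which the paper proves by induction on $D$ (splitting the outermost integral at $y=L$); this keeps the full rate $e^{-L}$ and pays only the polynomial $(L+1)^{2D-1}$, whence the exponent $2D+1$ after absorbing the remaining factors. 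A secondary discrepancy: your route carries the Jacobian $\prod_i (b_i t_i)^{-1}$ and hence a constant containing $\prod_i b_i^{-1}$, whereas the stated constant $\prod_i e^{2a_i}/a_i^2$ is independent of $\vec b$; the paper eliminates the $b_i$-dependence earlier, by majorizing the sum over the sparse set $\{(e^{b_i x_i})_i\}$ by a sum over all of $\nset_+^D$ before ever converting to an integral, rather than through a change of variables in $\vec x$.
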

\begin{proof}
Define the set
\[ \mathcal P = \left\{ \left(e^{b_i x_i}\right)_{i=1}^D \::\: \vec x
    \in \nset_+^D\right\},\]
and define $\lfloor \vec y \rfloor = \left(\lfloor y_i \rfloor\right)_{i=1}^D$. Then
\[
  \aligned \sum_{\{\vec x \in \nset_+^D \: :\: \sum_{i=1}^D a_i e^{b_ix} +
    b_i x_i > L \}} \exp\left(-\sum_{i=1}^D a_i
  e^{b_i x_i}\right)  &=
  \sum_{\{\vec y \in \mathcal P \: :\: \vec a \cdot \vec y +
    |\log(\vec y)| > L \}} \exp(-\vec a \cdot \vec y) \\
  &\leq \sum_{\{\vec y \in \mathcal P \: :\: \vec a \cdot (\lfloor
    \vec y \rfloor + \vec 1) +
    |\log(\lfloor \vec y\rfloor+\vec 1)| > L \}} \exp(-\vec a \lfloor \vec y \rfloor) \\
&\leq \sum_{\{\vec y \in \nset_+^D \: :\: \vec a \cdot  \vec y +
    |\log( \vec y+\vec 1)| > L-|\vec a| \}} \exp(-\vec a \cdot \vec y) \\
&\leq \int_{\{\vec y \in (1, \infty)^D \: :\: \vec a \cdot \vec y +
    |\log( \vec y+\vec 1)| > L-|\vec a| \}} \exp(-\vec a \cdot (\vec y - \vec
  1)) \di{\vec y}.
\endaligned
\]
Letting $z_i = a_i y_i + \log(y_i+1)$ and $p(z_i) = y_i \leq
\frac{z_i}{a_i}$, then
  \begin{align*}
  & \sum_{\{\vec x \in \nset_+^D \: :\: \sum_{i=1}^D a_i
    e^{b_ix} + b_i x_i > L \}} \exp{\left(-\sum_{i=1}^D a_i e^{b_i
        x_i}\right)} \\
  & = \exp(|\vec a|)\int_{\{\vec y \in (1, \infty)^D \: :\: \vec a
    \cdot \vec y + |\log(\vec y+\vec 1)| > L-|\vec a| \}} \exp(-\vec a
  \cdot \vec y - |\log(\vec y+ \vec 1)| + |\log(\vec y+\vec 1)|)
  \di{\vec y} \\
  &= \exp(|\vec a|) \int_{\{\vec z \in \otimes_{i=1}^D(a_i + \log(2),
    \infty) \: :\: |\vec z| > L-|\vec a| \}} \exp(- |\vec z|)
  \prod_{i=1}^D \left( \frac{p(z_i) + 1}{a_i +
      \frac{1}{p(z_i)+1}}\right)\di{\vec z} \\
  &\leq \exp(|\vec a|) \int_{\{\vec z \in \otimes_{i=1}^D(a_i +
    \log(2), \infty) \: :\: |\vec z| > L-|\vec a| \}} \exp(-|\vec z|)
  \prod_{i=1}^D \left( \frac{z_i +
      a_i}{a_i^2}\right)\di{\vec z} \\
  &\leq \left(\prod_{i=1}^D\frac{\exp(a_i)}{a_i^2}\right) \int_{\{\vec
    z \in \otimes_{i=1}^D(a_i + \log(2), \infty) \: :\: |\vec z| >
    L-|\vec a| \}}
  \exp(-|\vec z| + |\log(\vec z + \vec a)|)\di{\vec z} \\
  &= \left(\prod_{i=1}^D\frac{\exp(2 a_i)}{a_i^2}\right) \int_{\{\vec
    x \in \otimes_{i=1}^D(\log(2), \infty) \: :\: |\vec x| > L \}}
  \exp(-|\vec x| + |\log(\vec x)|)\di{\vec x} \\
  &\leq \left(\prod_{i=1}^D\frac{\exp(2 a_i)}{a_i^2}\right)
  \int_{\{\vec z \in \otimes_{i=1}^D(0, \infty) \: :\: |\vec z| > L
    \}} \exp(-|\vec z| + |\log(\vec z)|)\di{\vec z} .
  \end{align*}
Now let us prove, by induction on $D$, that
we have
\[
\int_{\{\vec z \in \rset_+^D \: :\:
  |\vec z| > L \}}
 \exp(-|\vec z| + |\log(\vec z)|)\di{\vec z} \leq \exp(-L)(L+1)^{2D-1}\,\,.
\]
For $D=1$, the inequality is a trivial equality that can be obtained
with integration by parts. Assume the inequality is true for $D$ and
let us prove it for $D+1$:
  \begin{align*}
\int_{\{\vec z \in \rset_+^{D+1} \: :\: |\vec z| > L
    \}} \exp(-|\vec z| + \log(\vec z ))\di{\vec z} &=
  \int_{L}^\infty y \exp(-y) \int_{\{\vec x \in
    \rset_+^{D} \}}
  \exp(-|\vec x| + \log(\vec x ))\di{\vec x} \di{y} \\
  &+\int_{0}^L y\exp(-y) \int_{\{\vec x \in
    \rset_+^{D} \::\: |\vec x| > L-y \}}
  \exp(-|\vec x| + \log(\vec x))\di{\vec x} \di{y} \\
  &\leq \exp(-L) (L+1) \\
  &+ \int_{0}^L y\exp(-y) \exp(-L+y)(L-y+1)^{2D-1} \di{y}
  \\
  &\leq \exp(-L)\left( L+1+ L^2 (L+1)^{2D-1}\right)\\
&\leq \exp(-L)\left( L+1\right)\left(1+ L (L+1)^{2D-1}\right)\\
&\leq \exp(-L) (L+1)^{2(D+1)-1}\,\,.
  \end{align*}
Finally, substituting back, we get the result.
\end{proof}

\begin{definition}
Given $\vec a \in \rset_+^D$ and $A>0$, let $\mathfrak{n}(\vec a, A)$ denote
the number of occurrences of $A$ in $\vec a$,
\[
\mathfrak{n}(\vec a, A) = \#\{i = 1, \ldots, d \::\: a_i = A\}.
\]
\end{definition}

\begin{lemma}
Assume $k \in \nset$, $\vec a \in \rset_+^D, L>|\vec a|$.
Then, the following bounds hold true:
\begin{align*}
  \int_{\{\vec x \in \rset_+^D \::\: |\vec x| > L\}} \exp(-\vec a \cdot\vec x)\di{\vec x}
  	\leq \mathfrak{B}_D(\vec a) \exp(-\min(\vec a) L) L^{\mathfrak{n}(\vec a, \min(\vec a))-1}
\end{align*}
where $\mathfrak{B}_D(\vec a)$ is a positive constant independent of $L$.
\end{lemma}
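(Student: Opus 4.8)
The plan is to prove the bound
\[
\int_{\{\vec x \in \rset_+^D \::\: |\vec x| > L\}} \exp(-\vec a \cdot \vec x)\di{\vec x}
\leq \mathfrak{B}_D(\vec a) \exp(-\min(\vec a) L)\, L^{\mathfrak{n}(\vec a, \min(\vec a))-1}
\]
by induction on the dimension $D$, peeling off one coordinate at a time. Without loss of generality I would order the components so that $a_1 = \min(\vec a)$. The base case $D=1$ is the elementary computation $\int_L^\infty e^{-a_1 x}\di{x} = a_1^{-1} e^{-a_1 L}$, which matches the claimed form with $\mathfrak{n}(\vec a,\min(\vec a))-1 = 0$ and $\mathfrak{B}_1(\vec a) = a_1^{-1}$ (recall $L>a_1$ makes the hypothesis vacuously harmless here).

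For the inductive step, split the domain according to the last coordinate $x_D$: write $\vec x = (\vec x', x_D)$ with $\vec x' \in \rset_+^{D-1}$, and set $\vec a' = (a_1,\ldots,a_{D-1})$, $s = |\vec x'|$. Integrating over $x_D$ first,
\[
\int_{\{|\vec x|>L\}} e^{-\vec a\cdot\vec x}\di{\vec x}
= \int_{\rset_+^{D-1}} e^{-\vec a'\cdot\vec x'}
\left(\int_{\max(0,\,L-s)}^\infty e^{-a_D x_D}\di{x_D}\right)\di{\vec x'}
= \frac{1}{a_D}\int_{\rset_+^{D-1}} e^{-\vec a'\cdot\vec x'} e^{-a_D\max(0,\,L-s)}\di{\vec x'}.
\]
Then break the $\vec x'$-integral into the region $\{s > L\}$, where the exponential factor in $x_D$ is just $1$, and the region $\{s \le L\}$, where it equals $e^{-a_D(L-s)}$. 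On $\{s>L\}$ the induction hypothesis applied in dimension $D-1$ gives a bound of the form $\mathfrak{B}_{D-1}(\vec a')\exp(-\min(\vec a')L)L^{\mathfrak{n}(\vec a',\min(\vec a'))-1}$; since $a_D \ge a_1 = \min(\vec a')$ when $a_D$ is not itself a minimizer, and since $\min(\vec a) = \min(\vec a')$, this term is already of the required shape (its polynomial power is one less than needed, which is fine). On $\{s \le L\}$ I would use $e^{-\vec a'\cdot\vec x'} \le e^{-a_1 s}$ and bound
\[
e^{-a_D L}\int_{\{\vec x'\in\rset_+^{D-1}\,:\, s\le L\}} e^{-(a_1-? )\cdots}
\]
— more carefully, combine the factors to get $e^{-a_D L}\int_{\{s\le L\}} e^{-\vec a'\cdot\vec x' + a_D s}\di{\vec x'}$; writing $a_D s = a_1 s + (a_D-a_1)s$ and using $\vec a'\cdot\vec x' \ge a_1 s$, the integrand is at most $e^{(a_D-a_1)s}\le e^{(a_D-a_1)L}$ on the bounded region, whose volume is $O(L^{D-1})$, yielding a bound $\lesssim e^{-a_1 L} L^{D-1}$; this is too lossy in general, so instead I would integrate out the coordinates carrying strictly larger rates exactly (each contributing a bounded constant $1/(a_i - a_1)$ after the shift) and only the coordinates tied at the minimum contribute a genuine $L$ power. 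Keeping track of exactly $\mathfrak{n}(\vec a,\min(\vec a))$ minimizing coordinates gives the power $L^{\mathfrak{n}(\vec a,\min(\vec a))-1}$.

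The main obstacle is precisely this bookkeeping in the $\{s\le L\}$ region: one must separate the coordinates whose rate strictly exceeds $\min(\vec a)$ (each of which, after the substitution $x_i \mapsto x_i$ and integrating $\int_0^{\cdot} e^{-(a_i-a_1)x_i}\di{x_i}$, contributes only the finite constant $(a_i-a_1)^{-1}$ or similar) from those exactly equal to $\min(\vec a)$ (which give, via $\int e^{0}\di{x_i}$ over a simplex of side $\le L$, the polynomial growth). A clean way to organize this is to change variables to $z_i = a_i x_i$ as in Lemma~\ref{lemma:bound-inf-sum-exp-lin}, reducing to $\int_{\{|\vec z|>L',\ z_i>0\}} e^{-\sum (z_i/c_i)}\di{\vec z}$ shape, then apply the already-proven one-dimensional peeling together with an explicit induction that tracks the multiplicity; the constant $\mathfrak{B}_D(\vec a)$ is then an explicit product over the non-minimizing rates of terms like $(a_i-\min(\vec a))^{-1}$ times a combinatorial factor, manifestly independent of $L$. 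I would present the multiplicity argument by relabeling so the first $\mathfrak{n} := \mathfrak{n}(\vec a,\min(\vec a))$ coordinates achieve the minimum, handling them with a simplex-volume estimate $\operatorname{vol}\{\vec u\in\rset_+^{\mathfrak n}: |\vec u|\le L\} = L^{\mathfrak n}/\mathfrak n!$, and the remaining coordinates by direct exact integration over $\rset_+$.
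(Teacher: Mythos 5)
Your proposal is a legitimate self-contained argument, but note that the paper does not actually prove this lemma: its ``proof'' is a pointer to Lemma B.3 of \cite{abdullatif.etal:MultiIndexMC} (and to \cite{griebel:tensor} for the case $\vec a=\oone$), so any complete derivation is necessarily a different route from what is printed here. What you reconstruct --- induction on $D$, Fubini in the last coordinate, splitting $\{|\vec x'|>L\}$ from $\{|\vec x'|\le L\}$, and then separating the coordinates tied at $\min(\vec a)$ (which produce a simplex volume and hence the polynomial factor $L^{\mathfrak{n}-1}$) from the strictly larger ones (which integrate to constants) --- is essentially the standard proof, and it closely mirrors the induction the paper \emph{does} write out for the companion bound on $\int_{\{|\vec x|\le L\}}\exp(\vec a\cdot\vec x)(L-|\vec x|)^k\di{\vec x}$, where the appended coordinate is likewise treated in two cases according to whether it attains the extremal rate. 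You also correctly identify, and propose the right repair for, the one step where a naive volume bound would give the lossy power $L^{D-1}$ instead of $L^{\mathfrak{n}-1}$.

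Two points would need tightening before this is a proof rather than a plan. First, in the region $\{|\vec x'|\le L\}$ the integrand after extracting $e^{-a_D L}$ is $\prod_{i<D}e^{(a_D-a_i)x_i}$, so the relevant gaps are $a_D-a_i$, not $a_i-a_1$; when the peeled rate $a_D$ exceeds $\min(\vec a)$, the tied coordinates carry a \emph{growing} factor $e^{(a_D-a_1)x_i}$ that must be integrated over the simplex $\{|\vec u|\le L\}$ to yield roughly $e^{(a_D-a_1)L}L^{\mathfrak{n}-1}/(a_D-a_1)$ before recombining with $e^{-a_D L}$ --- this is precisely the $\mathfrak{A}$-type estimate again, and it is the computation your sketch elides with ``more carefully, \dots''. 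Second, when the peeled coordinate is itself a minimizer, the induction hypothesis on $\{|\vec x'|>L\}$ returns the power $L^{\mathfrak{n}-2}$, and absorbing this into $L^{\mathfrak{n}-1}$ requires $L$ bounded away from zero (the hypothesis $L>|\vec a|$ does not guarantee $L\ge 1$), so the constant $\mathfrak{B}_D(\vec a)$ must also absorb the bounded range of $L$. Neither issue is fatal, but both must be written out for the argument to close.
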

\begin{proof}
  See \cite[Lemma B.3]{abdullatif.etal:MultiIndexMC} for a proof of
  the inequality and the value of $\mathfrak{B}_D(\vec a)$. Moreover,
  a proof of a consistent equality for the case $\vec a=\vec 1$ can be
  found in \cite[Proposition 2.3]{griebel:tensor}.
\end{proof}

\begin{lemma}
Assume $k \in \nset$, $\vec a \in \rset_+^D, L>|\vec a|$.
Then, the following bound holds:
\begin{align*}
  \int_{\{\vec x \in \rset_+^D \: :\: |\vec x| \leq L \}}
  \exp\left(\vec a \cdot \vec x \right) \left(L - |\vec x| \right)^k \di{\vec x}
  \leq \mathfrak A_{D}(\vec a, k) \exp(\max(\vec a) L) L^{\mathfrak{n}(\vec a, \max(\vec a))-1},
\end{align*}
where
\begin{align*}
& \mathfrak A_{D}(\vec a, k) = \frac{k!}{(\mathfrak{n}(\vec a, \max(\vec a))-1)! \max(\vec a)^{k+1}}
  \left(\prod_{a_i < \max(\vec a)} \frac{1}{ \max(\vec a) -
  a_i}\right).
\end{align*}
\end{lemma}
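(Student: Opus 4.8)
The plan is to prove this bound by induction on the dimension $D$, eliminating one integration variable at a time with Fubini's theorem; I will in fact prove it for every $L>0$, since the restriction $L>|\vec a|$ is never used. Throughout write $I_D(L;\vec a)=\int_{\{\vec x\in\rset_+^D:\,|\vec x|\le L\}}\exp(\vec a\cdot\vec x)(L-|\vec x|)^k\di{\vec x}$, $M=\max(\vec a)$ and $m=\mathfrak n(\vec a,M)$. For $D=1$ the substitution $s=L-x_1$ gives $I_1(L;a_1)=e^{a_1L}\int_0^L e^{-a_1 s}s^k\di s\le e^{a_1L}\int_0^\infty e^{-a_1 s}s^k\di s=e^{a_1L}k!/a_1^{k+1}$, which is exactly $\mathfrak A_1(\vec a,k)e^{ML}L^{m-1}$ (here $m=1$ and the product defining $\mathfrak A_1$ is empty).

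For the inductive step I treat first the case $m=D$, i.e.\ $a_1=\dots=a_D=M$. Here I would use the radial identity $\int_{\{\vec x\in\rset_+^D:\,|\vec x|\le L\}}f(|\vec x|)\di{\vec x}=\int_0^L f(t)\,t^{D-1}/(D-1)!\,\di t$ (differentiate $|\{\vec x\in\rset_+^D:|\vec x|\le t\}|=t^D/D!$ in $t$), bound $t^{D-1}\le L^{D-1}$, and apply the $D=1$ computation to $\int_0^L e^{Mt}(L-t)^k\di t$; this gives $I_D(L;\vec a)\le \tfrac{k!}{(D-1)!M^{k+1}}e^{ML}L^{D-1}$, again exactly $\mathfrak A_D(\vec a,k)e^{ML}L^{m-1}$.

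If $m<D$, I would (after a harmless permutation of coordinates, which leaves the integral, $\mathfrak A_D$ and $\mathfrak n$ invariant) single out an index with $a_D<M$ and integrate $x_D$ out. With $\vec a'=(a_1,\dots,a_{D-1})$, which still has maximum $M$ with the same multiplicity $m$, Fubini and the substitution $u=L-x_D$ give
\begin{align*}
  I_D(L;\vec a) &= e^{a_D L}\int_0^L e^{-a_D u}\,I_{D-1}(u;\vec a')\di u \\
  &\le e^{a_D L}\,\mathfrak A_{D-1}(\vec a',k)\int_0^L e^{(M-a_D)u}u^{m-1}\di u,
\end{align*}
where the induction hypothesis was applied to the inner $(D-1)$-dimensional integral. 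Since $M-a_D>0$, bounding $u^{m-1}\le L^{m-1}$ and $\int_0^L e^{(M-a_D)u}\di u\le e^{(M-a_D)L}/(M-a_D)$ yields $I_D(L;\vec a)\le\mathfrak A_{D-1}(\vec a',k)(M-a_D)^{-1}e^{ML}L^{m-1}$. It then only remains to verify the bookkeeping identity $\mathfrak A_D(\vec a,k)=\mathfrak A_{D-1}(\vec a',k)/(M-a_D)$, which is immediate: the factor $k!/((m-1)!M^{k+1})$ is unchanged, and since $a_D<M$ the product $\prod_{a_i<M}$ over $i\le D$ equals the one over $i\le D-1$ times the extra factor $1/(M-a_D)$. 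This closes the induction.

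The only genuine subtlety is \emph{which} variable to eliminate: one must always peel off a coordinate whose coefficient is strictly below the running maximum whenever such a coordinate exists. Eliminating a maximal coordinate while a non-maximal one is still present leaves an inner integral with a strictly smaller maximum, and the constant produced that way need not even dominate $\mathfrak A_D(\vec a,k)$ — a two-variable example with $a_1<a_2$ already exhibits this failure. Everything else (the base case, the radial reduction, and tracking the constant) is routine.
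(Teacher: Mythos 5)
Your proof is correct and follows essentially the same route as the paper's: induction on $D$, peeling off one coordinate via Fubini, with the same base case and the same bookkeeping identity for $\mathfrak A_D(\vec a,k)$; your insistence on eliminating a non-maximal coordinate whenever one exists corresponds exactly to the paper's ``without loss of generality $a_i\geq a_{i+1}$'' ordering. The only real difference is the tied-maximum case, which the paper keeps inside the induction as a second sub-case ($b=a_1$, using $\int_0^L (L-y)^{\mathfrak n-1}\,dy = L^{\mathfrak n}/\mathfrak n$) while you dispatch it in one stroke with the simplex-volume (radial) identity; both yield the identical constant, and, as you note, neither argument actually uses the hypothesis $L>|\vec a|$.
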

\begin{proof}
  Without loss of generality, assume that
  $a_{i} \geq a_{i+1}$ for all $i=1 \ldots D$, such that
  $a_1 = \max(\vec a)$.  We prove the result by induction on $D$. For
  $D=1$, we have
\[
  \aligned \int_0^L \exp\left(a x \right) \left(L - x \right)^k \di{x}
  &= \frac{k!}{a^{k+1}}\left(\exp(aL) - \sum_{i=0}^{k} \frac{a^i L^{i}}{i!}\right) \\
&\leq \frac{k! \exp(aL)}{a^{k+1}}.
\endaligned
\]

Next, assume that the result is valid for a given $D>1$ and $\vec
a\in\rset_+^D$ where $a_{i} \geq a_{i+1}$ for all
$i=1 \ldots D$, such that $a_1 = \max(\vec a)$. Let $b \leq a_1$ and
define a new vector $\widetilde{\vec a} = (\vec a, b) \in
\rset_+^{D+1}$. We have
  \begin{align*}
& \int_{\{(\vec x, y) \in \rset_+^{D+1} \: :\: y +
    |\vec x| \leq L \}} \exp\left(b y + \vec a \cdot \vec x
  \right)
  \left(L - y - |\vec x| \right)^k \di{y} \di{\vec x} \\
  &=\int_0^L {\exp\left(b y \right)} \int_{\{\vec x \in
    \rset_+^{D} \: :\: |\vec x| \leq L-y \}} \exp\left(\vec a \cdot
    \vec x \right) \left(L - y -|\vec x| \right)^k \di{\vec x}
  \di{y} \\
  &\leq \mathfrak{A}_D(\vec a, k) {\exp(a_1 L)} \int_0^L {\exp\left((b-a_1) y \right)} (L-y)^{\mathfrak{n}(\vec a, a_1)-1}
  \di{y}.
  \end{align*}
We distinguish between two cases:
\begin{enumerate}
\item $b < a_1$ then
 $\mathfrak{n}(\widetilde{\vec a}, a_1) = \mathfrak{n}(\vec a, a_1)$ and
\[
  \aligned \int_0^L {\exp\left(-(a_1-b)
      y \right)} (L-y)^{\mathfrak{n}(\vec a, a_1)-1} \di{y}
  &\leq L^{\mathfrak{n}(\vec a, a_1)-1}
  \int_0^\infty {\exp\left(-(a_1-b) y \right)}  \di{y} \\
&\leq L^{\mathfrak{n}(\widetilde{\vec a}, a_1)-1} \frac{1}{a_1-b},
\endaligned\]
and in this case
\[\aligned \mathfrak{A}_D(\vec a, k) \frac{1}{a_1-b} &=
\frac{k!}{(\mathfrak{n}(\vec a, a_1)-1)! a_1^{k+1}}
  \left(\prod_{a_i < a_1} \frac{1}{ a_1 - a_i}\right) \left(\frac{1}{a_1-b}\right)\\
&= \mathfrak{A}_{D+1}(\widetilde{\vec a}, k).
\endaligned\]
\item $b = a_1$ then
  $\mathfrak{n}(\widetilde{\vec a}, a_1) = \mathfrak{n}(\vec a,
  a_1)+1$ and
\[
\aligned
 \int_0^L (L-y)^{\mathfrak{n}(\vec a, a_1)-1}
  \di{y} &= \frac{L^{\mathfrak{n}(\vec a, a_1)}}{\mathfrak{n}(\vec a, a_1)}
= \frac{L^{\mathfrak{n}(\widetilde{\vec a}, a_1)-1}}{\mathfrak{n}(\widetilde{\vec a}, a_1)-1},
\endaligned
\]
and again
\[\aligned \frac{1}{\mathfrak{n}(\widetilde{\vec a}, a_1)-1}  \mathfrak{A}_D(\vec a, k)  &=
\frac{1}{\mathfrak{n}(\widetilde{\vec a}, a_1)-1}  \cdot \frac{k!}{(\mathfrak{n}(\vec a, a_1)-1)! a_1^{k+1}}
  \left(\prod_{a_i < a_1} \frac{1}{ a_1 - a_i}\right)\\
&= \mathfrak{A}_{D+1}(\widetilde{\vec a}, k).
\endaligned\]
\end{enumerate}

\end{proof}

\misccomplexity*
\begin{proof}

The bounds \eqref{eq:deltaW_ass} and \eqref{eq:deltaE_ass} can be obtained by elementary algebraic operations
combining Assumptions \ref{assump:growth_of_dof} and \ref{assump:dW_dE_model}; for instance,
\begin{align*}
\Delta W_\valpha^\textnormal{det} &
\leq C_{\textnormal{work}}^{\textnormal{det}} \prod_{i=1}^D h_i^{-\widetilde{\gamma}_i}
= C_{\textnormal{work}}^{\textnormal{det}} \prod_{i=1}^D (h_0 2^{-\alpha_i})^{-\widetilde{\gamma}_i}
= C_{\textnormal{work}}^{\textnormal{det}} h_0^{-|\widetilde{\vec \gamma}|} \prod_{i=1}^D 2^{\widetilde{\gamma}_i \alpha_i}
= C_{\textnormal{work}}^{\textnormal{det}} h_0^{-|\widetilde{\vec \gamma}|} \prod_{i=1}^D e^{\widetilde{\gamma}_i \alpha_i \log 2}, \\
\Delta W_\vbeta^\textnormal{stoc}  &
\leq C_{\textnormal{work}}^{\textnormal{stoc}} \prod_{n=1}^N 2^{\beta_n}
= C_{\textnormal{work}}^{\textnormal{stoc}} \prod_{n=1}^N e^{\beta_n \log 2},
\end{align*}
from which \eqref{eq:deltaW_ass} follows by setting
$\CWork = C_{\textnormal{work}}^{\textnormal{det}}
h_0^{-|\widetilde{\vec \gamma}|}
C_{\textnormal{work}}^{\textnormal{stoc}}$.
The proof is then divided into two steps.

\paragraph{Step 1: Work Estimate}
Observe that $\Xi_i = \frac{\gamma_{i}}{\gamma_i+\dRate{i}}  < 1$ for
all $i=1,\ldots D$ and that
$\mathfrak{z} = \mathfrak{n}(\vXi, \chi)$.  Thanks to
equations \eqref{eq:work_decomp} and \eqref{eq:deltaW_ass}, and using Lemma \ref{lemma:bound-sum-with-int},
the total work satisfies
\begin{align*}
  &\work{\mathcal{I}^*(L)} =
\sum_{(\valpha, \vbeta) \in \mathcal{I}^*(L)} \Delta
  W_{\valpha, \vbeta} \\
  \leq & {\CWork}
         \sum_{\left\{
         (\valpha, \vbeta) \in \nset_+^{D+N} \::\:
         \sum_{i=1}^D (\dRate{i} + \gamma_i)\alpha_i
         + \sum_{j=1}^N \delta \beta_j + g_j e^{\delta \beta_j} \leq
          L
         \right\}}
         \exp\left(\sum_{i=1}^D \gamma_i \alpha_i + \delta |\vbeta|
         \right) \\
  \leq & {\CWork}
         \int_{\left\{(\valpha, \vbeta) \in (1,\infty)^{D+N} \::\:
         \sum_{i=1}^D (\dRate{i} + \gamma_i)(\alpha_i-1)
         + \sum_{j=1}^N \delta (\beta_j-1) + g_j e^{\delta (\beta_j-1)} \leq
          L
         \right\}}
         \exp\left(\sum_{i=1}^D \gamma_i \alpha_i + \delta |\vbeta|
         \right) \di{\valpha}\di{\vbeta}.
\end{align*}
Next, let $\overline{\beta_j} = g_j e^{\delta(\beta_j-1)}$ and $\overline{\alpha_i} = (\dRate{i} + \gamma_i) (\alpha_i-1)$.
We have
\begin{align*}
  \work{\mathcal{I}^*(L)} \leq
   & {\CWork} \left(\prod_{j=1}^N \frac{2}{g_j \delta}\right)
      \left(\prod_{i=1}^D \frac{\exp(\gamma_i)}{\dRate{i} + \gamma_i}
      \right) \\
      &\qquad \int_{\left\{ (\overline{\valpha}, \overline{\vbeta}) \in
          \rset_+^D \times (\otimes_{j=1}^N (g_j, \infty))
          \::\:
          |\overline{\valpha}|
          + |\overline{\vbeta}| + |\log \overline{\vbeta}| \leq
          L + |\log \vec g| \right\}}
      \exp\left(\vXi \cdot \overline \valpha \right) \di{\overline{\valpha}}\di{\overline{\vbeta}}.
    \end{align*}
Dropping the over-line notation and defining $\widetilde L = L + |\log
\vec g|$ and $\mathscr{C}_{\text{W},1}$ to be the constant factor, we obtain
\begin{align*}
  &\work{\mathcal{I}^*(L)}
  \leq \mathscr{C}_{\text{W},1}
    \int_{\left\{ ({\valpha}, {\vbeta}) \in
\rset_+^D \times (\otimes_{j=1}^N (g_j, \infty)) \::\:
    |{\valpha}|
    + |{\vbeta}| + |\log {\vbeta}| \leq  \widetilde L \right\}}
    \exp\left(\vXi \cdot  \valpha  \right) \di{{\valpha}}\di{{\vbeta}}\\
  &= \mathscr{C}_{\text{W},1}
    \int_{\left\{ {\vbeta} \in \otimes_{j=1}^N (g_j, \infty) \::\:
    |{\vbeta}| + |\log{\vbeta}| \leq  \widetilde L \right\}}  \int_{\left\{
    {\valpha} \in \rset_+^D \::\:
    |\valpha| \leq  \widetilde L - |{\vbeta}| - |\log{\vbeta}| \right\}}
    \exp\left(\vXi \cdot  \valpha  \right) \di{\valpha} \di{\vbeta} \\
  &\leq \mathscr{C}_{\text{W},1}
    \mathfrak{A}_D\left( \vXi, 0\right)
    \int_{\left\{  {\vbeta} \in \otimes_{j=1}^N (g_j, \infty) \::\:
    |{\vbeta}| + |\log{\vbeta}| \leq  \widetilde L \right\}} \exp\Big( \chi \left(\widetilde L - |{\vbeta}| -
    |\log{\vbeta}|\right)\Big) \left(\widetilde L - |{\vbeta}| -
    |\log{\vbeta}|\right)^{\mathfrak{z}-1} \di{\vbeta}
\end{align*}
Define $\mathscr{C}_{\text{W},2} = \mathscr{C}_{\text{W},1}
    \mathfrak{A}_D\left( \vXi, 0\right) \exp(\chi \widetilde L)$, then
\begin{align*}
\work{\mathcal{I}^*(L)}
  &\leq \mathscr{C}_{\text{W},2}
    \int_{\left\{  {\vbeta} \in \otimes_{j=1}^N (g_j, \infty) \::\:
    |{\vbeta}| + |\log{\vbeta}| \leq  \widetilde L \right\}} \exp\left(- \chi \left(|{\vbeta}| +
    |\log{\vbeta}|\right)\right) \left(\widetilde L - |{\vbeta}| -
    |\log{\vbeta}|\right)^{\mathfrak{z}-1} \di{\vbeta}\\
  &\leq \mathscr{C}_{\text{W},2}
     \left({\widetilde L} - |\vec g| -
    |\log \vec g |\right)^{\mathfrak{z}-1}
    \int_{\left\{  {\vbeta} \in \otimes_{j=1}^N (g_j, \infty) \::\:
    |{\vbeta}| + |\log{\vbeta}| \leq  \widetilde L \right\}} \exp\left(-\chi \left( |{\vbeta}| +
    |\log{\vbeta}|\right)\right)  \di{\vbeta}.
\end{align*}
Since $\chi > 0$,  the previous integral is bounded  for all
$\widetilde L$ and we have
\begin{align*}
    \work{\mathcal{I}^*(L)} &\leq \mathscr{C}_{\text{W}} \exp(\chi
                            L) \left( L - |\vec g|\right)^{\mathfrak{z}-1}\leq \mathscr{C}_{\text{W}} \exp(\chi
                            L) L^{\mathfrak{z}-1},
\end{align*}
where
\[ \mathscr{C}_{\text{W}} = {\CWork} \left(\prod_{j=1}^N \frac{2
    g_i^{\chi}}{g_j \log 2}\right)
      \left(\prod_{i=1}^D \frac{\exp(\gamma_i)}{\dRate{i} + \gamma_i} \right)
\mathfrak{A}_D\left( \vXi, 0\right)
\int_{\left\{{\vbeta} \in \otimes_{j=1}^N (g_j, \infty) \right\}} \exp\left(-\chi \left( |{\vbeta}| +
  |\log{\vbeta}|\right)\right)  \di{\vbeta}.
\]
Substituting \eqref{eq:opt-L} yields
  \begin{align*}
  \work{\mathcal{I}^*(L)}
    &\leq W_{\max} \left(1 - \frac{(\mathfrak{z}-1)
\log \left( \frac{\log\left(\frac{W_{\max}}{\mathscr{C}_{\text{W}}}\right)}{\chi}
      \right)
      }
{\log\left(\frac{W_{\max}} {\mathscr{C}_{\text{W}}}\right)}
\right)^{\mathfrak{z}-1}.
  \end{align*}
From here it is easy to see that if \eqref{eq:maxW_ass} is
satisfied, then \eqref{eq:misc_complexity_work} follows.

\paragraph{Step 2: Error Estimate}
Thanks to equations \eqref{eq:error-decomp} and \eqref{eq:deltaE_ass},
the total error satisfies
  \begin{align*}
    &\error{\mathcal{I}^*(L)}
    \leq \sum_{(\valpha,\vbeta) \notin \mathcal I^*} \Delta E_{\valpha, \vbeta}\\
    \leq & \CError \sum_{\left\{ (\valpha, \vbeta) \in \nset_+^{D+N} \::\:
           \sum_{i=1}^D (\dRate{i} + \gamma_i)\alpha_i
           + \sum_{j=1}^N \delta \beta_j + g_j e^{\delta{\beta_j}} > L \right\}}
           \exp\left(
           -\sum_{i=1}^D \dRate{i} \alpha_i - \sum_{j=1}^N g_j
           e^{\delta \beta_j} \right) \\
    = & \CError \sum_{\left\{ (\valpha,\vbeta) \in \nset_+^{D+N}
        \::\:
        \sum_{i=1}^D (\dRate{i} + \gamma_i)\alpha_i > L \right\}}
        \exp\left(
        -\sum_{i=1}^D \dRate{i} \alpha_i - \sum_{j=1}^N g_j
        e^{\delta \beta_j} \right) \\
         &+ \CError \sum_{\left\{  \valpha \in \nset_+^{D} \::\:
           \sum_{i=1}^D (\dRate{i} + \gamma_i)\alpha_i  \leq L
           \right\}}
           \sum_{\left\{ \vbeta \in \nset_+^{N} \::\:
           \sum_{j=1}^N \delta \beta_j + g_j e^{\delta{\beta_j}} > L
           - \sum_{i=1}^D (\dRate{i} + \gamma_i)\alpha_i
           \right\}}
           \exp\left(-\sum_{i=1}^D \dRate{i} \alpha_i -\sum_{j=1}^N g_j e^{\delta \beta_j} \right).
   \end{align*}
Looking at the first term, let
 $\eta_i = \frac{\dRate{i}}{\gamma_i+\dRate{i}} < 1$ and $\veta =
\left(\eta_i\right)_{i=1}^D$ and note that
 $\mathfrak{z} = \#\left\{ i = 1 \ldots D:  \eta_i =
   \min(\veta)\right\}$. Then
\begin{align*}
  &\sum_{\left\{ (\valpha,\vbeta) \in \nset_+^{D+N}
    \::\:
    \sum_{i=1}^D (\dRate{i} + \gamma_i)\alpha_i > L \right\}}
    \exp\left(
    -\sum_{i=1}^D \dRate{i} \alpha_i - \sum_{j=1}^N g_j
    e^{\delta \beta_j} \right)
  \\
  &= \left(\sum_{\vbeta \in \nset_+^{N}}
    \exp\left( - \sum_{j=1}^N g_j e^{\delta \beta_j} \right)\right)
    \left(\sum_{
    \left\{ \valpha \in \nset_+^{D}
    \::\:
    \sum_{i=1}^D (\dRate{i} + \gamma_i)\alpha_i > L \right\}}
    \exp\left(-\sum_{i=1}^D \dRate{i} \alpha_i \right)\right)\\
  &\leq \mathscr{C}_{\text{E},1}
\int_{
    \left\{ \valpha \in (1,\infty)^{D}\::\:
    \sum_{i=1}^D (\dRate{i} + \gamma_i)\alpha_i > L \right\}}
    \exp\left(-\sum_{i=1}^D \dRate{i} \left(\alpha_i-1\right) \right) \di{\valpha}\\
  &= \mathscr{C}_{\text{E},1} \left(\prod_{i=1}^D
\frac{\exp(\dRate{i})}{\dRate{i} + \gamma_i} \right)
\int_{
    \left\{ \vec x \in \otimes_{i=1}^D (\dRate{i}+\gamma_i,\infty) \::\:
    |\vec x| > L \right\}}
    \exp\left(-\sum_{i=1}^D \frac{\dRate{i}}{\dRate{i}+\gamma_i} x_i \right)
    \di{\vec x} \\
&\leq \mathscr{C}_{\text{E},2} \exp\left(-\min(\veta) L\right) L^{\mathfrak{z}-1},
\end{align*}
where
\[
  \mathscr{C}_{\text{E},2} =  \mathfrak{B}_D(\veta)
  \left(\prod_{i=1}^D \frac{\exp(\dRate{i})}{\dRate{i} + \gamma_i}
  \right) \sum_{\vbeta \in \nset_+^{N}}
    \exp\left( - \sum_{j=1}^N g_j e^{\delta \beta_j} \right).
\]
For the second term, letting
$H = L - \sum_{i=1}^D (\dRate{i}+\gamma_i)\alpha_i$,
we can bound the sum using Lemma~\ref{lemma:bound-inf-sum-exp-lin}:
\[
\sum_{\left\{ \vbeta \in \nset_+^{N} \::\:
           \sum_{j=1}^N \delta \beta_j + g_j e^{\delta{\beta_j}} > H \right\}}
           \exp\left(-\sum_{j=1}^N g_j e^{\delta \beta_j} \right)
\leq \left(\prod_{j=1}^N \frac{\exp(2g_j)}{g_j^2} \right) \exp(-H)
(H+1)^{2N-1}.
\]
Defining $\mathscr C_{E,3} = \prod_{j=1}^N {\exp(2g_j)}{g_j^{-2}}$ and
substituting back
\begin{align*}
 &\sum_{\left\{ \valpha \in \nset_+^{D} \::\: \sum_{i=1}^D
      (\dRate{i} + \gamma_i)\alpha_i \leq L \right\}}
  \exp\left(-\sum_{i=1}^D \dRate{i} \alpha_i \right)
  \sum_{\left\{ \vbeta \in \nset_+^{N} \::\: \sum_{j=1}^N \delta
      \beta_j + g_j e^{\delta{\beta_j}} > L - \sum_{i=1}^D (\dRate{i}
      + \gamma_i)\alpha_i \right\}}
  \exp\left(-\sum_{j=1}^N g_j e^{\delta \beta_j} \right) \\
  &\leq \mathscr C_{E,3}%
\sum_{\left\{ \valpha \in \nset_+^{D} \::\: \sum_{i=1}^D
      (\dRate{i} + \gamma_i)\alpha_i \leq L \right\}}
  \exp\left(-L + \sum_{i=1}^D \gamma_i \alpha_i \right)
  \left(L +1 - \sum_{i=1}^D (\dRate{i}+\gamma_i)\alpha_i\right)^{2N-1}\\
  &= \mathscr C_{E,3}%
  \int_{\left\{ \valpha \in (1,\infty)^{D} \::\: \sum_{i=1}^D
      (\dRate{i} + \gamma_i)\lfloor \alpha_i \rfloor \leq L \right\}}
  \exp\left(-L + \sum_{i=1}^D \gamma_i \lfloor \alpha_i \rfloor \right)
  \left(L +1- \sum_{i=1}^D (\dRate{i}+\gamma_i)\lfloor \alpha_i
    \rfloor\right)^{2N-1}\di{\valpha}\\
&\leq \mathscr C_{E,3}%
  \int_{\left\{ \valpha \in (1,\infty)^{D} \::\: \sum_{i=1}^D
      (\dRate{i} + \gamma_i)(\alpha_i-1) \leq L \right\}}
  \exp\left(-L + \sum_{i=1}^D \gamma_i  \alpha_i \right)
\left(L+1 - \sum_{i=1}^D (\dRate{i}+\gamma_i)( \alpha_i-1)\right)^{2N-1}
\di{\valpha}\\
&= \mathscr C_{E,3}%
\left(\prod_{i=1}^D \frac{\exp(\gamma_i)}{\gamma_i + \dRate{i}} \right)
  {\exp ( - L )}
  \int_{\left\{ \valpha \in \rset_+^{D} \::\: |\vec x| \leq L \right\}}
  \exp\left(\vXi \cdot \vec x \right)
\left(L+1 - |\vec x|\right)^{2N-1}
\di{\vec x}\\
  &\leq \mathscr{C}_{\text{E}, 4} {\exp ( (\chi-1) L )}
  L^{\mathfrak{z}-1},
\end{align*}
where
\[\mathscr{C}_{\text{E}, 4} = \left(\prod_{j=1}^N
    \frac{\exp(2g_j)}{g_j^2} \right) \left(\prod_{i=1}^D \frac{\exp(\gamma_i)}{\gamma_i + \dRate{i}} \right) \mathfrak{A}_D(\vXi, 2N-1).\]
Finally, noting that
\[ \chi-1 = -\min(\veta), \]
we have the error estimate
\begin{align*}%
  \error{\mathcal{I}^*(L)} \leq
  \CError \left(\mathscr{C}_{\text{E}, 2}
  +\mathscr{C}_{\text{E}, 4} \right) \exp(-\min(\veta) L) L^{\mathfrak{z}-1}.
\end{align*}
Then, substituting $L$ from \eqref{eq:opt-L} and evaluating the limit
gives~\eqref{eq:misc_complexity_error}.
\end{proof}

\bibliographystyle{elsarticle-num}
 \end{document}